\numberwithin{equation}{section}
\newcommand{\tabcaption}{\def\@captype{table}\caption}
\newtheorem{lem}{Lemma}[section]
\newtheorem{thm}{Theorem}[section]
\newtheorem{rem}{Remark}[section]
\newtheorem{exmp}{Example}[section]
\title{An eXtended HDG method for  Darcy-Stokes-Brinkman interface problems \thanks
  {
    This work was supported in part by   National Natural Science Foundation of
    China (11771312).
  }
}
\author{
	Yihui Han\thanks{South China Research Center for Applied Mathematics and Interdisciplinary Studies, South China Normal University, Guangzhou, 510630, China, Email: yhhan@m.scnu.edu.cn},
	Xiao-Ping Wang\thanks{Department of Mathematics, The Hong Kong University of Science and Technology, Clear Water Bay, Kowloon, Hong Kong, China, Email: mawang@ust.hk},
    Xiaoping Xie \thanks{Corresponding author. School of Mathematics, Sichuan University, Chengdu 610064, China, Email: xpxie@scu.edu.cn}
 }
\date{}
\begin{document}
	\maketitle
\begin{abstract}
This paper proposes an interface/boundary-unfitted  eXtended hybridizable discontinuous Galerkin (X-HDG) method for Darcy-Stokes-Brinkman interface problems in two and three dimensions. The method uses piecewise linear polynomials for the velocity approximation and piecewise constants for both the velocity gradient and pressure approximations in the interior of elements inside the subdomains separated by the interface, uses piecewise constants for the numerical traces of velocity on  the inter-element boundaries inside the subdomains, and uses  piecewise constants or linear polynomials for the numerical traces of velocity on the interface. Optimal error estimates  are derived for the interface-unfitted X-HDG scheme.
 Numerical experiments are provided to verify the theoretical results and the robustness of the proposed method.
		
	$\bf{Key}$ $\bf{Words}$: eXtended HDG method, Darcy-Stokes-Brinkman interface   problem, interface/boundary-unfitted mesh,   error estimate.
\end{abstract}

\section{Introduction} 
Let  $\Omega \subset \mathbb{R}^d (d = 2, 3) $   be  a bounded domain 
divided  into two subdomains, $\Omega_i (i = 1,2)$,   by a piecewise smooth interface $\Gamma$ (cf. Figure \ref{domain}). We consider the following Darcy-Stokes-Brinkman interface problem:  find the velocity $\bm{u}$ and the pressure $p$ such that
\begin{align}
\label{pb1}
\left \{
\begin{array}{rl}
- \nabla\cdot(\nu\nabla \bm{u})+\nabla p + \alpha\bm{u}= \bm{f}&   \text{in} \quad\Omega_1\cup\Omega_2,\\ 
\nabla\cdot \bm{u} = 0&   \text{in} \quad\Omega_1\cup\Omega_2,\\ 
\bm{u}=\bm{g}_D&  \text{on} \quad \partial\Omega,\\
\llbracket \bm{u} \rrbracket= \bm{0}, \ \llbracket(\nu \nabla \bm{u}-p\bm{I})\bm{n}\rrbracket = \bm{g}_N^{\Gamma} & \text{on}\quad \Gamma.
\end{array}
\right.
\end{align}
Here   the viscosity coefficient $\nu$ and the  zeroth-order term coefficient $\alpha$ are piecewise constants with
\begin{align}
\nu|_{\Omega_i} = \nu_i >0,\quad  
\alpha|_{\Omega_i} = \alpha_i\geq 0,\quad i=1,2. 
\end{align}   
The jump of a function $w$ across the interface $\Gamma$ is defined by $ \llbracket  w  \rrbracket := (w|_{\Omega_{1}})|_\Gamma-(w|_{\Omega_{2}})|_\Gamma$, $\bm{I}$ the $d\times d$  identity matrix, and $\bm{n}$ denotes the unit normal vector along $\Gamma$,   pointing to $\Omega_{2}$. $\bm{f}$  denotes  the   body force,   $\bm{g}_N^{\Gamma}$ the   interface traction, and  $\bm{g}_D$  the source term satisfying
\begin{align}\label{13-gD}
\int_{\partial \Omega}\bm{g}_D\cdot\bm{n} = 0,
\end{align}
where $\bm{n}$ is the outward unit normal vector along $\partial \Omega$.  The Darcy-Stokes-Brinkman  model \eqref{pb1}  is usually used to describe  porous media flow coupled with viscous fluid  flow in a single form of equation (cf. \cite{Nield;2006,  xie2008uniformly,konno2011h,Johnny2012A,gatica2014analysis}).  


\begin{figure}[htp]	
	\centering
	\includegraphics[height = 5 cm,width= 5.5 cm]{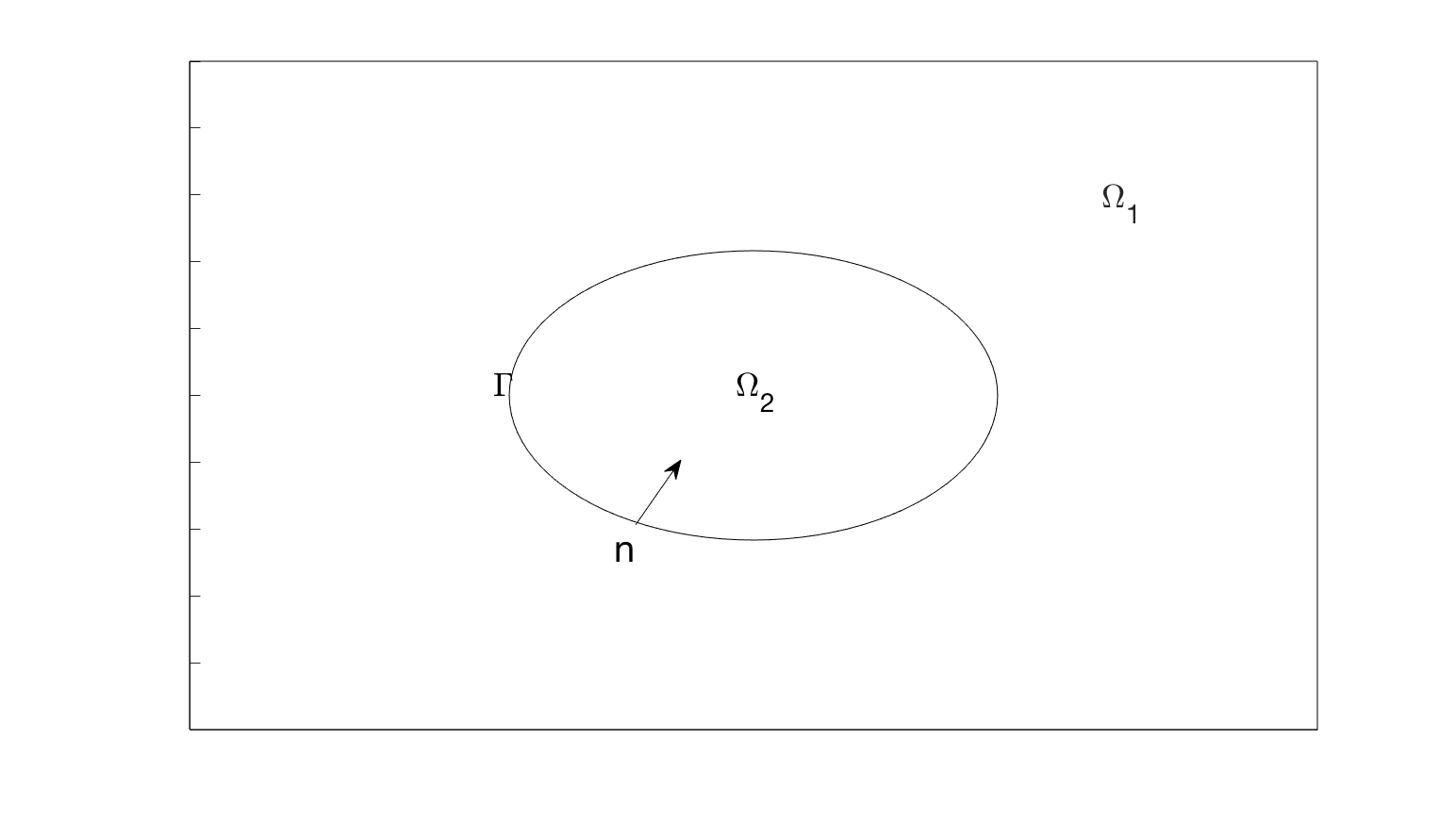} 
	\includegraphics[height = 5 cm,width= 5.5 cm]{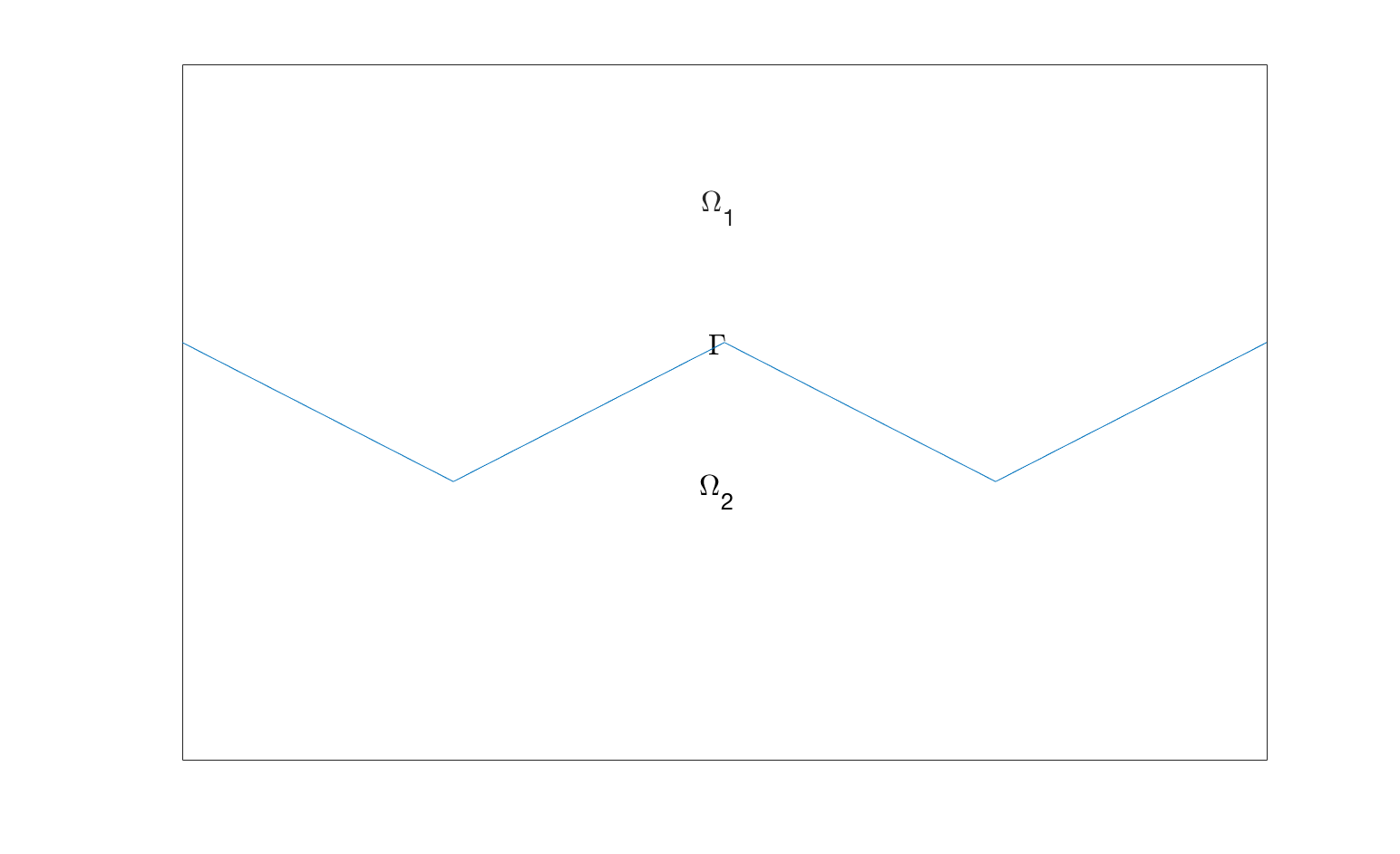} 
	\caption{The geometry of domain with circle interface or fold line interface}\label{domain}
\end{figure}

For an elliptic interface problem, the low regularity of the  solution  due to  the coefficient discontinuity   may result in reduced accuracy of finite element discretization  \cite{babuvska1970finite,xu2013estimate}.  One strategy   for this situation is to use interface(or body)-fitted meshes (cf. Figure  \ref{Fitted and unfitted mesh}) so as  to dominate the approximation error caused by the non-smoothness of solution  \cite{barrett1987fittedandunfitted, Brambles96fin, Chen1998Finite, Huang2002Some, Plum2003Optimal, Cai2017Discontinuous}. However, the generation of interface-fitted meshes is usually expensive, especially when the interface is of complicated geometry or moving with time or iteration. 

Another strategy avoiding the loss of numerical accuracy is to use certain modification of the finite element approximation around the interface. The resultant finite element methods do not need interface-fitted meshes (cf. Figure  \ref{Fitted and unfitted mesh}). One representative of such interface-unfitted methods is the  eXtended/Generalized Finite Element Method (XFEM or GFEM),  where additional basis functions   characterizing the singularity of the solution around the interface are enriched into the corresponding approximation space. We refer to \cite{Thomas2010The} for an overview work and  \cite{babuvska1994special,belytschko2009review,babuska2010Optimal,Burman2012Fictitious,hansbo2002unfitted,wangtao2018nitsche,yangcc2018interface} for some developments of   XFEM/GFEM. In particular, we  refer to \cite{Cattaneo2015Stabilized,chernyshenko2020an,hansbo2014cut,hansbo2017a,KirchhartXFEm, wang2015new} for several XFEMs using  additional cut basis functions for Stokes or Darcy interface problems. It should be pointed out that the immersed finite element method (IFEM) is another type of interface-unfitted method,  where   special finite element basis  functions  are constructed to satisfy the interface jump conditions (cf. \cite{Adjerid2015An,adjerid2019an,
lizhilin1998immersed,lizhilin2006immersed,lin2015partially,zhang2004immersed,He2012Theconvergence} and the references therein). 

\begin{figure}[H]
		\begin{minipage}[t]{0.35\textwidth}
			\includegraphics[height = 5.3 cm,width= 6.4 cm, clip, trim = 3.5cm 2cm 3cm 1cm]{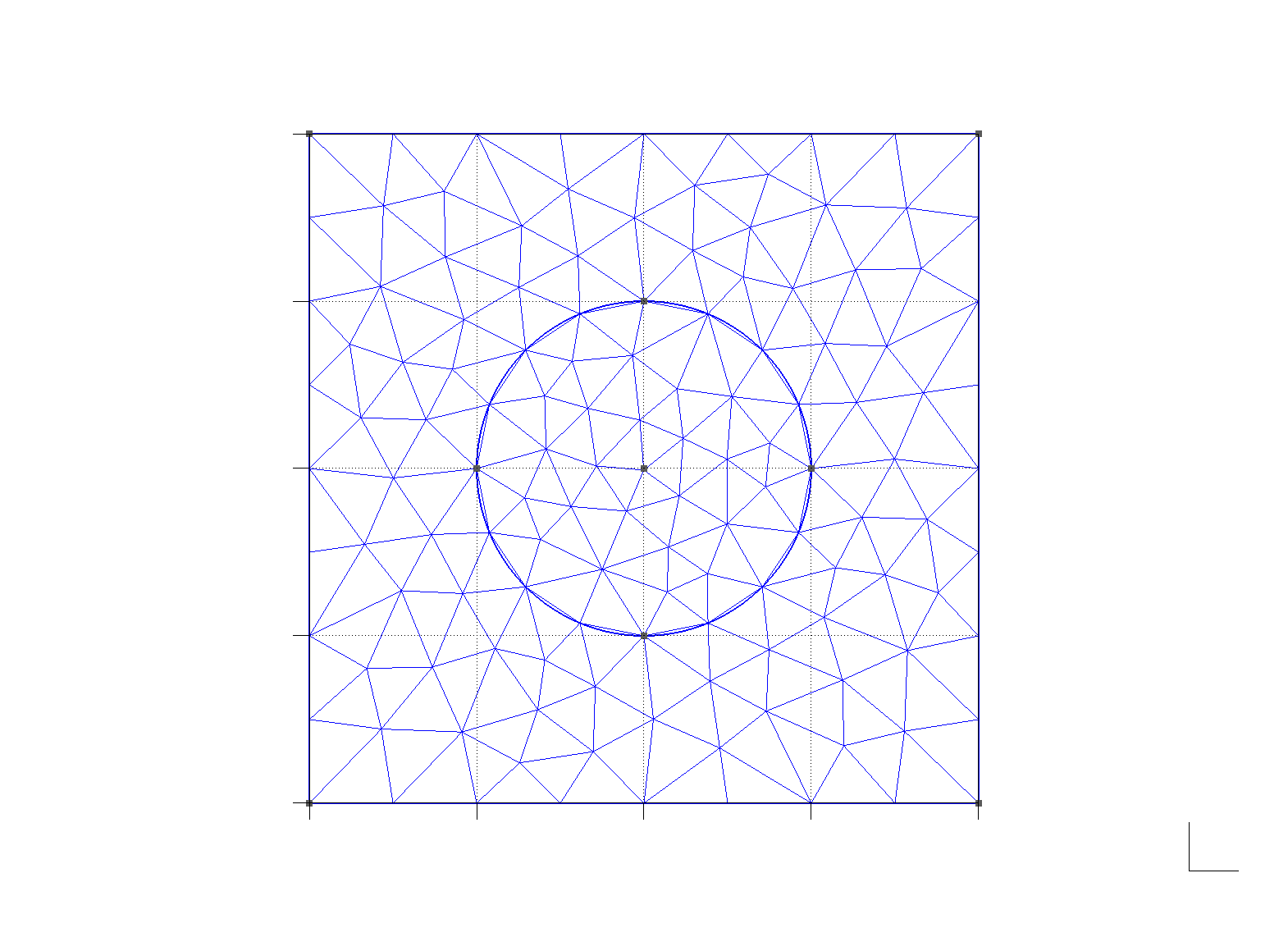} 
		\end{minipage}
		\begin{minipage}[t]{0.32\textwidth}
			\includegraphics[height = 5 cm,width=5 cm]{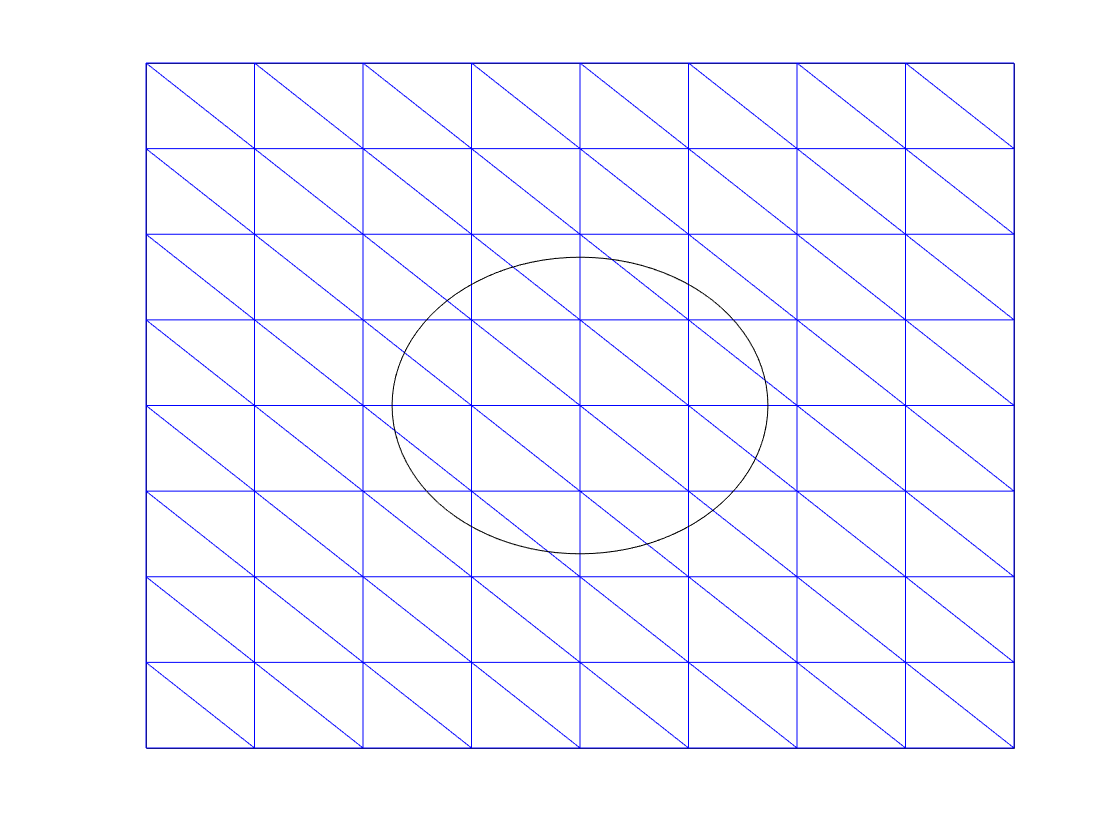}
		\end{minipage}
		\begin{minipage}[t]{0.32\textwidth}
			\includegraphics[height = 5 cm,width=5 cm]{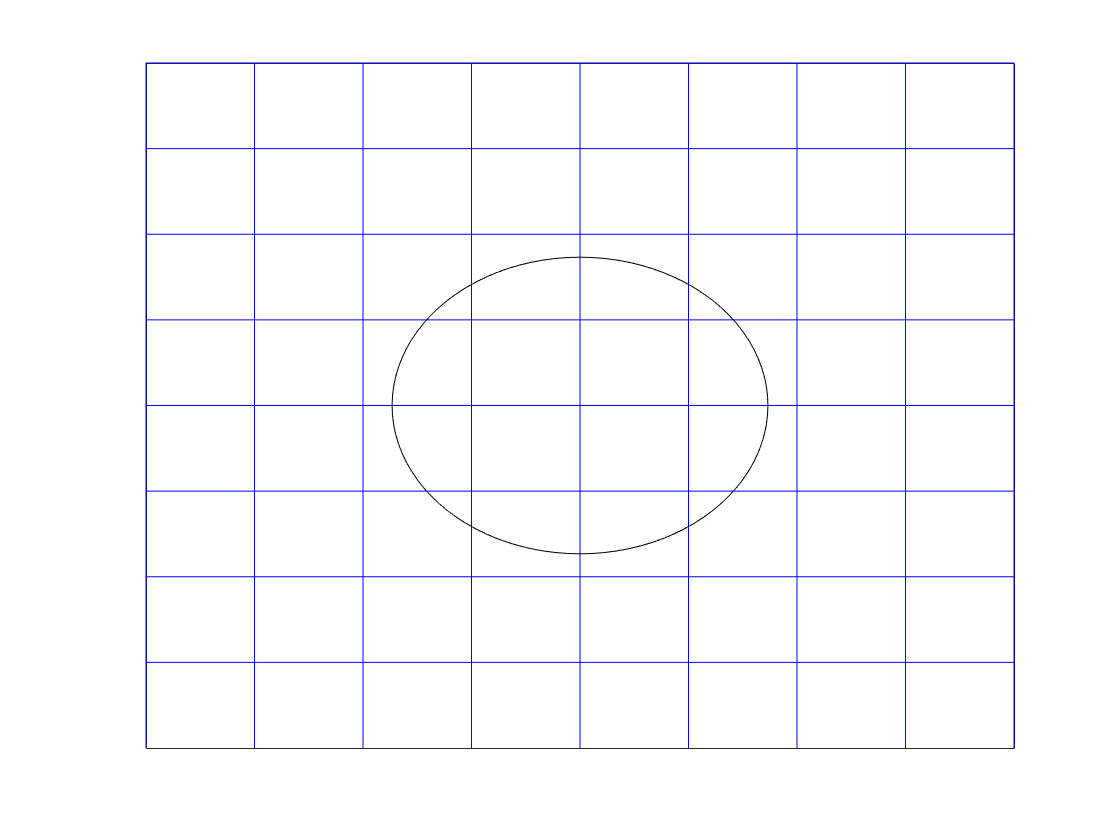}
		\end{minipage}
    \caption{Fitted(left) and Unfitted mesh(middle and right)}
	\label{Fitted and unfitted mesh}
\end{figure}

The hybridizable discontinuous Galerkin (HDG) framework \cite{Cockburn2009Unified} provides a unifying strategy for hybridization of finite element methods  for second order elliptic problems. 
By the local elimination of the unknowns defined in the interior of elements, the HDG method    leads to a system where the unknowns are only the globally coupled degrees of freedom describing the introduced Lagrange multiplier.  We refer to \cite{Cockburn2010HDGstokes,Cockburn2011HDGstokes,cockburn2014divergence, Li-X-Z2016analysis,Li-X2016analysis,Li-X2016SIAM, Fu2018Parameter,Araya2018Analysis,Gatica2017Analysis,Huynh2013A,Dong2018An,G2016eXtendedvoid,G2016eXtended1,Wang2013Hybridizable}
for some   developments and applications of the HDG method.    We also mention that 
    arbitrary order interface-unfitted eXtended HDG methods with optimal convergence were analyzed   in \cite{HanChenWangXie2019Extended,hanwangxie2020interfaceboundaryunfitted}  for   elliptic and elasticity interface problems, respectively. 

In this paper we aim to propose a low order eXtended HDG (X-HDG) method for the Darcy-Stokes-Brinkman interface problem \eqref{pb1}. The main features of the method are as follows:
\begin{itemize}
	\item  The method is a low order scheme, which uses piecewise linear polynomials for the velocity approximation and piecewise constants for both the velocity gradient and pressure   approximations in the interior of elements inside the subdomains separated by the interface, and uses piecewise constants for the numerical traces of velocity on the inter-element boundaries inside the subdomains.  
	
	\item  To   deal with the interface conditions,  the  interface   is approximated by a fold line/plane,  on which the numerical traces of velocity  adopt    piecewise constants or  piecewise linear polynomials.  
	
	\item The method is parametric-friendly in the sense that optimal error estimates are obtained without requiring   ``sufficiently large” stabilization parameters in the scheme.
	
	\item The method uses interface-unfitted polygonal/polyhedral meshes, and applies to curved domains with boundary-unfitted meshes.
	
	\end{itemize}

The rest of the paper is organized as follows. Section 2 introduces  the X-HDG scheme for the interface problem with a polygonal/polyhedral domain. Section 3 is devoted to the error estimation. Section 4 applies  the X-HDG method to a curved domain problem. Numerical examples are provided in Section 5 to verify the theoretical results. Finally, Section 6 gives some concluding remarks.

\section{X-HDG scheme for interface problem }

\subsection{Notation and XFE spaces}
For any bounded domain $D\subset \mathbb{R}^s$ $(s= d, d-1)$ and nonnegative integer $m$, let $H^m(D)$ and $H_0^m(D)$ be the usual $m$-th order Sobolev spaces on $D$, with norm $\lVert \cdot\rVert_{m,D}$ and semi-norm   $\lvert\cdot\rvert_{m,D}$.  In particular, $L^2(D):=H^0(D)$ is the space of square integrable functions, with  inner product   $(\cdot,\cdot)_{D}$.
When $D \subset R^{d-1}$, we use $\langle\cdot,\cdot\rangle_D$ to replace $(\cdot,\cdot)_D$.
For $m=1,2$, we set 
\begin{align*}
H^m(\Omega_{1}\cup\Omega_{2}):= \{v\in L^2(\Omega),v|_{\Omega_{1}} \in H^m(\Omega_{1}),\  \text{and}\  v|_{\Omega_{2}} \in H^m(\Omega_{2})\}, \\
\lVert\cdot\rVert_m := \lVert\cdot\rVert_{m,\Omega_1\cup\Omega_2} = \sum\limits_{i=1}^{2} \lVert\cdot\rVert_{m,\Omega_i}, \quad
\lvert\cdot\rvert_m :=\lvert\cdot\rvert_{m,\Omega_1\cup\Omega_2}= \sum\limits_{i=1}^{2} \lvert\cdot\rvert_{m,\Omega_i}. 
\end{align*} 
For any integer $k\geqslant0$,    $P_k(D)$   denotes the set of all polynomials on D with degree at most $k$.

Assume  that $\Omega$ is a polygonal/polyhedral domain. Let $\mathcal{T}_h=\cup\{K\}$, consisting of arbitrary open polygons/polyhedrons, be a shape-regular partition of the domain $\Omega$  
in the   sense that the following two assumptions hold (cf. \cite{chen-Xie2016WG}): 

\noindent  {\bf(M1)}. There exists a positive constant $\theta_*$ such that the following holds: for each element $K\in \mathcal{T}_h$, there exists a point $M_K\in K$ such that $K$ is star-shaped with respect to every point in the circle (or sphere) of center $M_K$ and radius $\theta_*h_K$.
	
\noindent  {\bf(M2)}. There exists a positive constant $l_*$ such that for every element $K\in \mathcal{T}_h$, the distance between any two vertexes is no less than $  l_*h_K$.

We define the set of all elements intersected by the interface $\Gamma$ as 
\begin{align*}
\mathcal{T}_h^{\Gamma} :=\{K\in \mathcal{T}_h: K\cap\Gamma \neq \emptyset\}.
\end{align*} 
For any $K\in \mathcal{T}_h^{\Gamma}$, called an interface element, let  $\Gamma_K := K\cap \Gamma$ be the part of $ \Gamma$ in $K$, and   $\Gamma_{K,h}$ be the straight line/plane segment connecting the intersection between $\Gamma_K$ and $\partial K$ (Figure \ref{interfaceelem}). 

To ensure that $\Gamma$ is reasonably resolved by $\mathcal{T}_h$, we make the following standard assumptions on $\mathcal{T}_h$ and   $\Gamma$( cf. Figure \ref{violate assump} for two cases  violating the  assumptions): 

\noindent  {\bf(A1)}.  For $K\in \mathcal{T}_h^{\Gamma}$ and any edge/face $F\subset \partial K$ which intersects $\Gamma$,  $F_\Gamma:=\Gamma\cap F$ is simply connected with either   $F_\Gamma =F$ or $meas(F_\Gamma)=0$. 

\noindent 	{\bf(A2)}. For $K\in \mathcal{T}_h^{\Gamma}$, $\Gamma_{K}$ is sufficiently smooth such that for any two different points $\bm{x},\bm{y}\in \Gamma_{K}$, the unit normal vectors $\bm{n}(\bm{x})$ and $\bm{n}(\bm{y})$, pointing to $\Omega_{2}$, at $\bm{x}$ and $\bm{y}$ satisfy
\begin{align}\label{gamma}
\lvert \bm{n}(\bm{x})-\bm{n}(\bm{y})\rvert \leq \gamma h_K,
\end{align}
with $\gamma\geq 0$(cf.\cite{Chen1998Finite,xu2013estimate}). Note that   $\gamma = 0$ when $\Gamma_{K}=\Gamma_{K,h}$, i.e. $\Gamma_{K}$ is a straight line/plane segment.

\begin{figure}[H]
	\centering
		\includegraphics[height = 4.5 cm,width=4.5 cm]{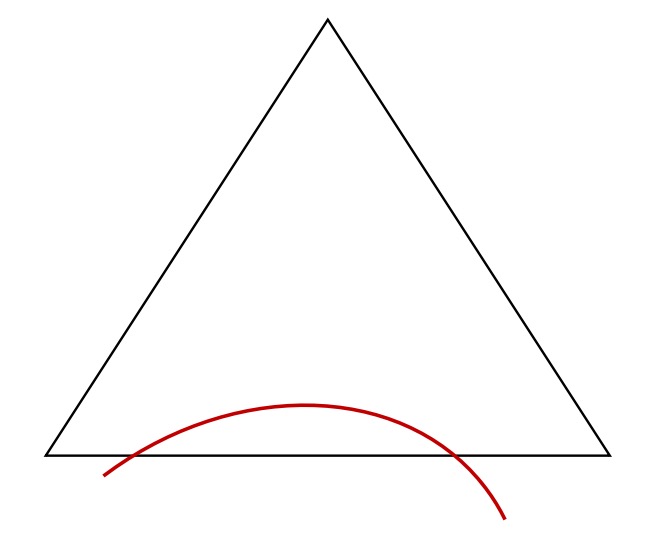} \qquad 
		\includegraphics[height = 4.5 cm,width=4.5 cm]{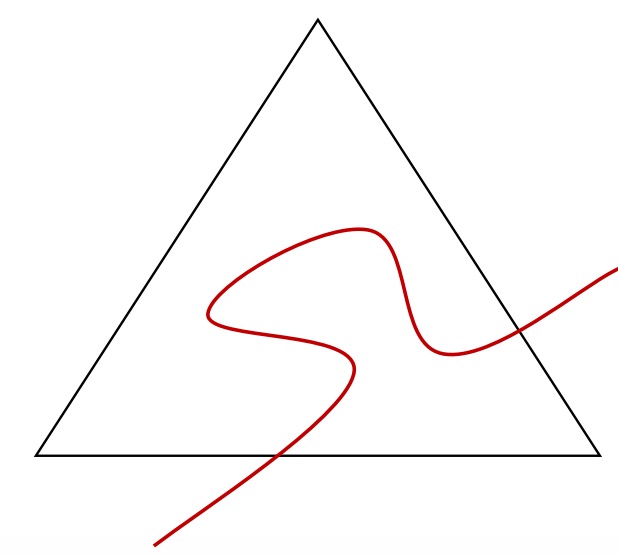}
	\caption{Two cases violating assumptions:  violating $\bf(A1)$(left) and violating $\bf(A2)$(right).}
	\label{violate assump}
\end{figure}

\begin{figure}[htbp]
	\centering	
	\includegraphics[height = 5.0 cm,width=6.5 cm]{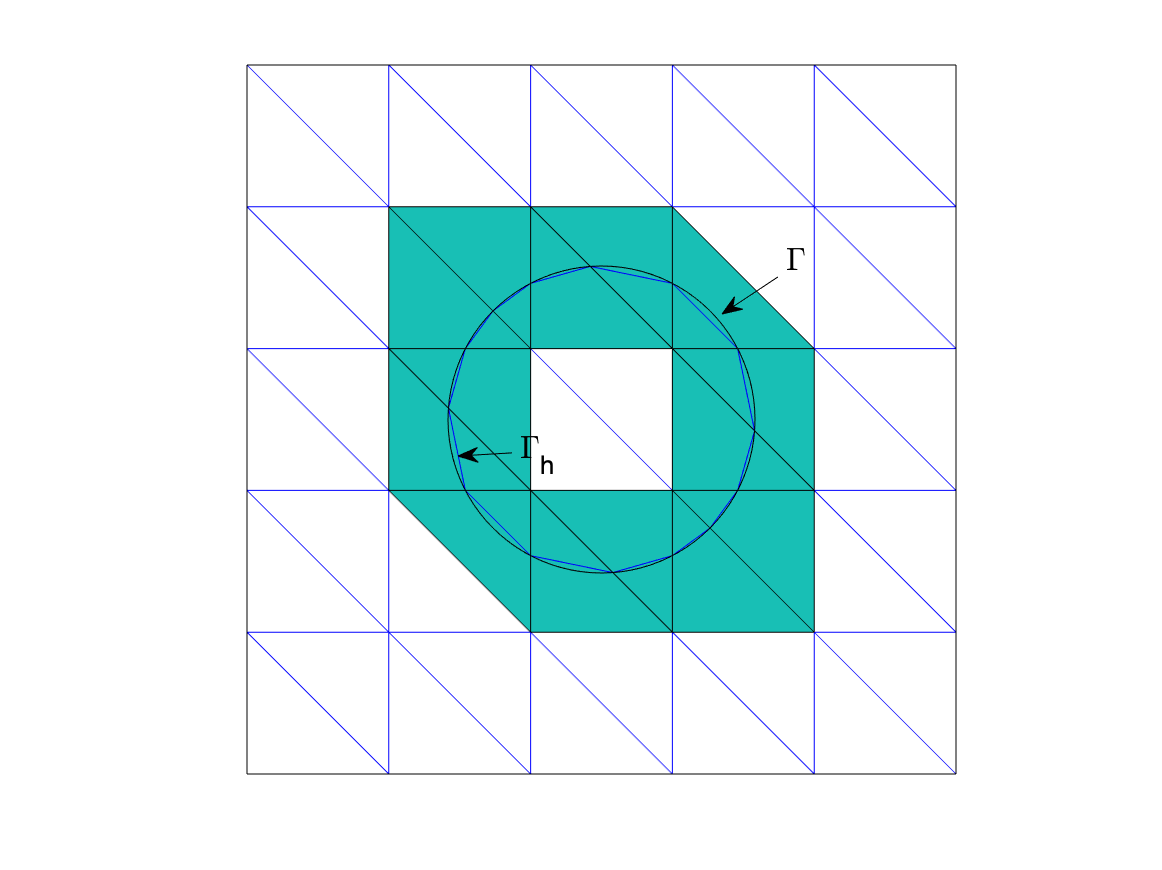} 				
	\caption{An example (a triangulation with circular interface):   $\mathcal{T}_h^{\Gamma}$ (green part),  $\varepsilon_h^{\Gamma}$ (collection of {  blue segments} inside the elements of $\mathcal{T}_h^{\Gamma}$), and $\varepsilon_h^*$ (collection of all  triangle edges not in $\varepsilon_h^{\Gamma}$).  }\label{interfaceelem}	
\end{figure}

Let  $\mathcal{\varepsilon}_h$ be the set of all edges(faces) of all elements in $\mathcal{T}_h$.
Denote 
by $\mathcal{\varepsilon}_h^{\Gamma} $  the partition of  the fold line/plane approximation of $\Gamma$
with respect to $\mathcal{T}_h$,  i.e.
$$\mathcal{\varepsilon}_h^{\Gamma} := \{ F:\ F=\Gamma_{K,h}, \text{ or } F= \Gamma\cap \partial K \text{ if $\Gamma\cap \partial K$ is an edge/face of $K$},\    K\in \mathcal{T}_h\}.$$
We set
$\mathcal{\varepsilon}_h^* :=\mathcal{\varepsilon}_h\setminus\mathcal{\varepsilon}_h^{\Gamma} $. For any $K\in \mathcal{T}_h$ and   $F\in \mathcal{\varepsilon}_h^*\cup \mathcal{\varepsilon}_h^{\Gamma},$    let $h_K$  and $h_F$ be respectively   the diameters of $K$ and   $F$, and let $\bm{n}_K$ be the unit outward  normal vector along   $\partial K$. Denote by 
$h: = \max\limits_{K\in \mathcal{T}_h}h_K$  the mesh size of $\mathcal{T}_h$, and by   $\nabla_h$ and $\nabla_h\cdot$ the piecewise-defined gradient and divergence operators with respect to $\mathcal{T}_h$, respectively.

Since the rest of the paper deals with the discrete problem, in what follows and without ambiguity, let $\Omega_{1}$ and $\Omega_2$ denote the two sides of $\Gamma_h$ rather than of $\Gamma$, and set $K_i = K\cap \Omega_{i}$ for $i = 1,2$. 

Throughout the paper, we use $a\lesssim b$ $(a\apprge b)$ to denote $a\leq Cb$ $(a\geq Cb)$, 
where   $C$ is a generic positive constant   independent of mesh parameters $h, h_K, h_e$,   the coefficients $\nu_i, \alpha_i$ $(i =1,2)$ and the  location of the interface relative to the mesh.

\subsection{ X-HDG scheme}
The X-HDG method is based on the following first-order formulations of Darcy-Stokes-Brinkman interface problem \eqref{pb1}: 
\begin{subequations}\label{firstorderscheme}
	\begin{align}
	\bm{L} = \nu\nabla \bm{ u} &\quad   \text{in} \quad\Omega_1\cup\Omega_2, \label{firstorderscheme-a}\\
	- \nabla\cdot\bm{L}+\nabla p + \alpha \bm{u}= \bm{f} & \quad  \text{in} \quad\Omega_1\cup\Omega_2,\\
	\nabla\cdot\bm{u} = 0 & \quad  \text{in} \quad\Omega_1\cup\Omega_2,\\
	\bm{u}=\bm{g}_D& \quad \text{on} \quad \partial\Omega,\\
	\llbracket \bm{u} \rrbracket= \bm{0},\ \llbracket (\bm{L}-p\bm{I})\bm{n} \rrbracket = \bm{g}_N^{\Gamma}&\quad \text{on}\quad \Gamma.
	\end{align}
\end{subequations}

Let $\chi_i$ be the characteristic function on $\Omega_{i}$ for  $i=1,2$.  
For any integer $r\geq 0$, $F\in \mathcal{\varepsilon}_h^*\cup \mathcal{\varepsilon}_h^{\Gamma}$ and   $K\in \mathcal{T}_h$, let  $Q_{r}^b: L^2(D)\rightarrow P_r(D)$  and  $Q_{r}: L^2(\tilde D)\rightarrow P_r(\tilde D)$  be the standard $L^2$ orthogonal projection operators    for   $D=F\cap\bar{\Omega}_i$ and $\tilde D=K\cap\Omega_i$, respectively.  Vector or tensor analogues of   $Q_{r}^b$ and $Q_{r}$  are denoted by $\bm{Q}_{r}^b$ and  $\bm{Q}_{r}$, respectively.
Set $$ \oplus\chi_iP_{r}(K) := \chi_1P_{r}(K)+\chi_2P_{r}(K), \quad r=0,1.$$
We  introduce  the following X-HDG finite element spaces:
\begin{align*}
\bm{W}_h =& \{\bm{w}\in L^2(\Omega)^{d\times d}:\ \forall K\in \mathcal{T}_h, \bm{w}|_K \in P_0(K)^{d\times d} \ \text{if} \ K\cap \Gamma = \varnothing; \bm{w}|_K \in (\oplus\chi_i P_0(K))^{d\times d} \ \text{if}\  K\cap \Gamma \neq \varnothing\}, \\
\bm{V}_h =& \{\bm{v}\in L^2(\Omega)^d:  \ \forall K\in \mathcal{T}_h, \bm{v}|_K \in  P_1(K)^{d}  \ \text{if} \  K\cap \Gamma = \varnothing; \bm{v}|_K \in (\oplus\chi_i  P_1(K))^{d} \ \text{if}\ K\cap \Gamma \neq \varnothing\}, \\
Q_h = & \{q\in L_0^2(\Omega): \ \forall K\in \mathcal{T}_h, q|_K \in P_0(K) \ \text{if} \  K\cap \Gamma = \varnothing;  q|_K \in \oplus\chi_i P_0(K) \ \text{if} \ K\cap \Gamma \neq \varnothing\}, \\
\bm{M}_h =& \{ \bm{\mu}\in L^2(\varepsilon_h^*)^d: \forall F\in \varepsilon_h^*, \bm{\mu}|_F \in P_0(F)^d \ \text{if}\   F\cap \Gamma = \varnothing;  \bm{\mu}|_F \in (\oplus\chi_i P_0(F) )^{d}\ \text{if}\   F\cap \Gamma \neq \varnothing\}, \\
\bm{\tilde{M}_h} = &\{\tilde{\bm{\mu}} \in L^2(\mathcal{\varepsilon}_h^{\Gamma})^d: 
\tilde{\bm{\mu}}|_{F}\in P_m(F)^d,\forall F\in\mathcal{\varepsilon}_h^{\Gamma}\}  \text{ with } m=0 \text{ or } 1,\\
\bm{M}_h(\bm{g}_D) =& \{\bm{\mu}\in \bm{M}_h : \bm{\mu}|_{F} = \bm{Q}_0^b\bm{g}_D, \forall F\in \varepsilon_h \text{ with }F\subset \partial\Omega\}.
\end{align*}
To describe the X-HDG scheme,   we also define 
\begin{align*}
(\cdot,\cdot)_{\mathcal{T}_h}:=\sum\limits_{K\in  \mathcal{T}_h}(\cdot,\cdot)_K,\quad  \langle\cdot,\cdot\rangle_{\partial\mathcal{T}_h\setminus \mathcal{\varepsilon}_h^{\Gamma}}:=\sum\limits_{K\in  \mathcal{T}_h}\langle\cdot,\cdot\rangle_{\partial K\backslash \mathcal{\varepsilon}_h^{\Gamma}},
\end{align*}
and, for  scalars $q$, vector $\bm{u},\bm{v}$ and tensor $\bm{w}$ with $q_i=q|_{\bar{\Omega}_i\cap F }, \bm{u}_i=\bm{u}|_{\bar{\Omega}_i\cap F}, \bm{v}_i=\bm{v}|_{\bar{\Omega}_i\cap F}$ and $\bm{w}_i=\bm{w}|_{\bar{\Omega}_i\cap F}$, 
\begin{align*}
\langle q,\bm{v}\cdot \bm{n}\rangle_{*,\mathcal{\varepsilon}_h^{\Gamma}} :&=\sum\limits_{F\in \mathcal{\varepsilon}_h^{\Gamma}}\langle q_1,\bm{v}_1\cdot \bm{n}_1\rangle_{\bar{\Omega}_1\cap F}+\langle q_2,\bm{v}_2\cdot \bm{n}_2\rangle_{\bar{\Omega}_2\cap F}, \\
\langle \bm{u},\bm{v}\rangle_{*,\mathcal{\varepsilon}_h^{\Gamma}} :&=\sum\limits_{F\in \mathcal{\varepsilon}_h^{\Gamma}}\langle \bm{u}_1,\bm{v}_1\rangle_{\bar{\Omega}_1\cap F}+\langle \bm{u}_2,\bm{v}_2\rangle_{\bar{\Omega}_2\cap F},\\
\langle \bm{w}\bm{n},\bm{v}\rangle_{*,\mathcal{\varepsilon}_h^{\Gamma}} :&=\sum\limits_{F\in \mathcal{\varepsilon}_h^{\Gamma}}\langle \bm{w}_1\bm{n}_1,\bm{v}_1\rangle_{\bar{\Omega}_1\cap F}+\langle \bm{w}_2\bm{n}_2,\bm{v}_2\rangle_{\bar{\Omega}_2\cap F} ,
\end{align*}
where $\bm{n}_i$ denotes  the unit normal vector along $\Gamma_h$ pointing from   $\Omega_{i}$ to $\Omega_{j}$ with $i,j=1,2$ and $i\neq j$. 

The eXtended HDG method seeks $(\bm{L}_h,\bm{u}_h,p_h,\bm{\hat{u}}_h,\bm{\tilde{u}}_h)\in \bm{W}_h\times \bm{V_h}\times Q_h\times \bm{M}_h(\bm{g}_D)\times \bm{\tilde{M}_h}$ satisfying
\begin{subequations}\label{xhdgscheme}
	\begin{align}
	(\nu^{-1}\bm{L}_h,\bm{w})_{\mathcal{T}_h} 
	- \langle \hat{\bm{u}}_h,\bm{w}\bm{n}\rangle_{\partial\mathcal{T}_h\setminus \mathcal{\varepsilon}_h^{\Gamma}} -\langle \tilde{\bm{u}}_h,\bm{w}\bm{n}\rangle_{*,\mathcal{\varepsilon}_h^{\Gamma}} =& 0, \label{xhdg1}\\
	(\alpha\bm{u}_h,\bm{v})_{\mathcal{T}_h} + \langle \tau(\bm{Q}_0^b\bm{u}_h-\bm{\hat{u}}_h),\bm{v}\rangle_{\partial\mathcal{T}_h\setminus \mathcal{\varepsilon}_h^{\Gamma}} 
	+  \langle \tau(\bm{Q}_m^b\bm{u}_h-\bm{\tilde{u}}_h),\bm{v}\rangle_{*,\mathcal{\varepsilon}_h^{\Gamma}}  =& (\bm{f},\bm{v}) ,\label{xhdg2}\\
	 \langle\bm{\hat{u}}_h\cdot\bm{n},q\rangle_{\partial\mathcal{T}_h\setminus \mathcal{\varepsilon}_h^{\Gamma}} + \langle\bm{\tilde{u}}_h\cdot\bm{n},q\rangle_{*,\mathcal{\varepsilon}_h^{\Gamma}}=& 0,\label{xhdg3}\\
	\langle \bm{L}_h \bm{n},\bm{\mu}\rangle_{\partial \mathcal{T}_h\setminus \mathcal{\varepsilon}_h^{\Gamma}}-\langle p_h\bm{n},\bm{\mu}\rangle_{\partial\mathcal{T}_h\setminus \mathcal{\varepsilon}_h^{\Gamma}}-\langle \tau(\bm{Q}_0^b\bm{u}_h-\bm{\hat{u}}_h),\bm{\mu}\rangle_{\partial \mathcal{T}_h\setminus\mathcal{\varepsilon}_h^{\Gamma}} =& 0,\label{xhdg5}\\
	\langle \bm{L}_h \bm{n},\bm{\tilde{\mu}}\rangle_{*,\mathcal{\varepsilon}_h^{\Gamma}}-\langle p_h\bm{n},\bm{\tilde{\mu}}\rangle_{*,\mathcal{\varepsilon}_h^{\Gamma}}-\langle \tau(\bm{Q}_m^b\bm{u}_h-\bm{\tilde{u}}_h),\bm{\tilde{\mu}}\rangle_{*,\mathcal{\varepsilon}_h^{\Gamma}}=& \langle \bm{g}_N^{\Gamma_h},\bm{\tilde{\mu}}\rangle_{*,\mathcal{\varepsilon}_h^{\Gamma}} , \label{xhdg6} 
	\end{align}
\end{subequations}
for all $(\bm{w},\bm{v},q,\bm{\mu},\bm{\tilde{\mu}})\in \bm{W}_h\times \bm{V_h}\times Q_h\times \bm{M}_h(0)\times \bm{\tilde{M}_h}$. Here the stabilization function $\tau$ is defined as following: for any  $K\in \mathcal{T}_h $
and $i=1,2$,
\begin{align}\label{stabilization function}
\tau|_{F\cap \bar\Omega_{i}} &= \nu_i h_K^{-1} ,\quad {\rm for} \ F\subset \partial K\setminus  {\mathcal{\varepsilon}_h^{\Gamma}} \ {\rm or } \ F\in \mathcal{\varepsilon}_h^{\Gamma} .
\end{align}
When   $F\in \mathcal{\varepsilon}_h^{\Gamma}$ is a line segment/straight plane, we take $\bm{g}_N^{\Gamma_h}|_{F} := \bm{g}_N^{\Gamma}$, and when  $F=\Gamma_{K,h}\neq \Gamma_K$ for some $K\in \mathcal{T}_h^{\Gamma}$, we set $\bm{g}_N^{\Gamma_h}|_{F}$ to be some linear interpolation of  $\bm{g}_N^{\Gamma}$  using data of $\bm{g}_N^{\Gamma}$ at two (2D case) /three (3D case) intersection points of  $\Gamma_K$ and $\Gamma_{K,h}$. 
\begin{rem}
	From the first-order system \eqref{firstorderscheme}, there should be some terms like $(\bm{u}_h,\nabla_h\cdot \bm{w})_{\mathcal{T}_h} $, $-(\nabla_h\cdot\bm{L}_h,\bm{v})_{\mathcal{T}_h}$, $(\nabla_h p_h,\bm{v})_{\mathcal{T}_h}$, and $-(\bm{u}_h,\nabla q)_{\mathcal{T}_h}$ in the scheme \eqref{xhdgscheme}. In fact, they are all vanish  since $\bm{W}_h$ and $  Q_h$ are piecewise constant  tensor/scalar spaces.
\end{rem}
\begin{rem}
	We note that in the implementation, we can locally eliminate the $\bm{L}_h, \bm{u}_h$ defined in the interior of  elements, and the reduced system only involves the unknowns of $p_h$,   $\hat{\bm{u}}_h$ and $\tilde{\bm{u}}_h$.
\end{rem}

\begin{thm}\label{wellposed}
The X-HDG scheme \eqref{xhdgscheme} (with $m=0$ or $1$) admits a unique solution.
\end{thm}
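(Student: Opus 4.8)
The plan is to exploit that \eqref{xhdgscheme} is a square linear system: after subtracting any fixed element of $\bm{M}_h(\bm{g}_D)$, the trial space $\bm{W}_h\times\bm{V}_h\times Q_h\times\bm{M}_h(0)\times\bm{\tilde{M}}_h$ coincides with the test space, which is finite dimensional, so it suffices to prove uniqueness, i.e.\ that the homogeneous data $\bm{f}=\bm{0}$, $\bm{g}_D=\bm{0}$, $\bm{g}_N^{\Gamma}=\bm{0}$ force $(\bm{L}_h,\bm{u}_h,p_h,\hat{\bm{u}}_h,\tilde{\bm{u}}_h)=\bm{0}$. Assume therefore the data vanish; then $\hat{\bm{u}}_h\in\bm{M}_h(0)$, so $(\bm{w},\bm{v},q,\bm{\mu},\bm{\tilde{\mu}})=(\bm{L}_h,\bm{u}_h,p_h,\hat{\bm{u}}_h,\tilde{\bm{u}}_h)$ is an admissible choice of test functions.

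First I would derive an energy identity by adding \eqref{xhdg1}, \eqref{xhdg2}, \eqref{xhdg5} and \eqref{xhdg6} with this choice. The terms coupling $\bm{L}_h$ to $\hat{\bm{u}}_h$ and to $\tilde{\bm{u}}_h$ cancel by the symmetry of the duality pairings, and the pressure terms $\langle p_h\bm{n},\hat{\bm{u}}_h\rangle$ and $\langle p_h\bm{n},\tilde{\bm{u}}_h\rangle$ cancel against \eqref{xhdg3} with $q=p_h$. For the stabilization terms one uses that on every face $\tau$ is piecewise constant (equal to $\nu_i h_K^{-1}$ on $F\cap\bar\Omega_i$) and that $\bm{Q}_0^b$, $\bm{Q}_m^b$ are $L^2$-projections, so that on each sub-face $F\cap\bar\Omega_i$ a $P_0$ (resp.\ $P_m$) factor may be paired against $\bm{Q}_0^b\bm{u}_h$ (resp.\ $\bm{Q}_m^b(\bm{u}_h|_{\Omega_i})$) in place of $\bm{u}_h$; this turns the stabilization contributions into sums of squares and yields
\begin{equation*}
(\nu^{-1}\bm{L}_h,\bm{L}_h)_{\mathcal{T}_h}+(\alpha\bm{u}_h,\bm{u}_h)_{\mathcal{T}_h}+\langle\tau(\bm{Q}_0^b\bm{u}_h-\hat{\bm{u}}_h),\bm{Q}_0^b\bm{u}_h-\hat{\bm{u}}_h\rangle_{\partial\mathcal{T}_h\setminus\varepsilon_h^{\Gamma}}+\langle\tau(\bm{Q}_m^b\bm{u}_h-\tilde{\bm{u}}_h),\bm{Q}_m^b\bm{u}_h-\tilde{\bm{u}}_h\rangle_{*,\varepsilon_h^{\Gamma}}=0 .
\end{equation*}
Since $\nu_i>0$, $\alpha_i\ge0$ and $\tau>0$, each summand is nonnegative, hence each vanishes: $\bm{L}_h=\bm{0}$; $\bm{Q}_0^b\bm{u}_h=\hat{\bm{u}}_h$ on every face of $\partial\mathcal{T}_h\setminus\varepsilon_h^{\Gamma}$, boundary faces included; and $\bm{Q}_m^b(\bm{u}_h|_{\Omega_i})=\tilde{\bm{u}}_h$ on each $F\in\varepsilon_h^{\Gamma}$, $i=1,2$. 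Note this already explains why no ``sufficiently large'' $\tau$ is needed.

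Next I would recover $\bm{u}_h$, $\hat{\bm{u}}_h$, $\tilde{\bm{u}}_h$ and $p_h$. Putting $\bm{L}_h=\bm{0}$ in \eqref{xhdg1} gives $\langle\hat{\bm{u}}_h,\bm{w}\bm{n}\rangle_{\partial\mathcal{T}_h\setminus\varepsilon_h^{\Gamma}}+\langle\tilde{\bm{u}}_h,\bm{w}\bm{n}\rangle_{*,\varepsilon_h^{\Gamma}}=0$ for all $\bm{w}\in\bm{W}_h$; taking $\bm{w}$ supported on a single element $K$ (or a single piece $K_i$) with an arbitrary constant value, replacing the traces by the matching interior projections (exact again because $\bm{n}$ is facewise constant and $\bm{Q}_0^b,\bm{Q}_m^b$ are $L^2$-projections), and using the divergence theorem, one obtains $\int_K\nabla\bm{u}_h=\bm{0}$ (or $\int_{K_i}\nabla\bm{u}_h=\bm{0}$); since $\bm{u}_h$ is affine on each piece, it is piecewise constant. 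Then $\bm{Q}_0^b\bm{u}_h=\bm{u}_h$ and $\bm{Q}_m^b\bm{u}_h=\bm{u}_h$, so the trace relations above say that $\bm{u}_h$ is single-valued across every face of $\varepsilon_h^*$ and of $\varepsilon_h^{\Gamma}$, i.e.\ globally continuous, hence constant on $\Omega$; since its trace on $\partial\Omega$ equals $\bm{Q}_0^b\bm{g}_D=\bm{0}$, we get $\bm{u}_h=\bm{0}$, and then $\hat{\bm{u}}_h=\bm{0}$, $\tilde{\bm{u}}_h=\bm{0}$. Finally, with $\bm{L}_h=\bm{0}$ and the stabilization residuals zero, \eqref{xhdg5} and \eqref{xhdg6} reduce to $\langle p_h\bm{n},\bm{\mu}\rangle_{\partial\mathcal{T}_h\setminus\varepsilon_h^{\Gamma}}=0$ for all $\bm{\mu}\in\bm{M}_h(0)$ and $\langle p_h\bm{n},\bm{\tilde{\mu}}\rangle_{*,\varepsilon_h^{\Gamma}}=0$ for all $\bm{\tilde{\mu}}\in\bm{\tilde{M}}_h$; since $p_h$ is elementwise constant, the first forces every interior jump of $p_h$ to vanish, so $p_h$ is constant on each $\Omega_i$, while the second, tested with $\bm{\tilde{\mu}}=(p_h|_{\Omega_1}-p_h|_{\Omega_2})\bm{n}_1\in\bm{\tilde{M}}_h$, forces $p_h|_{\Omega_1}=p_h|_{\Omega_2}$; together with $p_h\in L_0^2(\Omega)$ this gives $p_h=0$. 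Hence the homogeneous solution is trivial, and \cref{wellposed} follows.

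I expect the main obstacle to be the bookkeeping in the energy identity: one must handle the interface-split pairings $\langle\cdot,\cdot\rangle_{*,\varepsilon_h^{\Gamma}}$ with care, since there $\bm{u}_h$ and $p_h$ carry two values while $\tilde{\bm{u}}_h$ is single-valued and $\tau$ jumps across $\Gamma_h$, and one must check that every mixed term truly cancels so that only the nonnegative square terms remain — this is exactly where the piecewise-constant choice of $\tau$ and the $L^2$-orthogonality of $\bm{Q}_0^b,\bm{Q}_m^b$ enter. A secondary, routine point is the connectedness argument that upgrades ``piecewise constant with vanishing jumps'' to ``globally constant'' for $\bm{u}_h$ and for $p_h|_{\Omega_i}$.
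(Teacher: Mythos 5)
Your proposal is correct and takes essentially the same route as the paper's proof: reduce the square linear system to uniqueness, test with the solution itself to get the energy identity (using the piecewise-constant $\tau$ and the $L^2$-orthogonality of $\bm{Q}_0^b,\bm{Q}_m^b$), deduce $\bm{L}_h=\bm{0}$ and the trace relations, recover $\nabla_h\bm{u}_h=\bm{0}$ from \eqref{xhdg1}, and conclude with the pressure-jump and zero-mean argument. The differences are only cosmetic (elementwise constant $\bm{w}$ plus the divergence theorem instead of taking $\bm{w}=\nabla_h\bm{u}_h$ after integration by parts, and an explicit interface test function for the jump of $p_h$).
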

\begin{proof}
	Since the \eqref{xhdgscheme} is a linear square system, it suffices to show that if all of the given data vanish, i.e. $\bm{f} = \bm{g}_D  = \bm{g}_N^{\Gamma_h} = 0$, then we get the zero solution. By taking the $(\bm{w},\bm{v},q,\bm{\mu},\bm{\tilde{\mu}}) = (\bm{L}_h,\bm{u}_h,p_h,\bm{\hat{u}_h},\bm{\tilde{u}_h})$ in \eqref{xhdgscheme} and adding these equations together, we have
		\begin{align*}
	(\nu^{-1}\bm{L}_h,\bm{L}_h)_{\mathcal{T}_h} + (\alpha\bm{u}_h,\bm{u}_h)_{\mathcal{T}_h}+\langle \tau(\bm{Q}_0^b\bm{u}_h-\bm{\hat{u}}_h),(\bm{u}_h-\bm{\hat{u}}_h)\rangle_{\partial\mathcal{T}_h\setminus \mathcal{\varepsilon}_h^{\Gamma}} + \langle \tau(\bm{Q}_m^b\bm{u}_h-\bm{\tilde{u}}_h),(\bm{u}_h-\bm{\tilde{u}}_h)\rangle_{*,\mathcal{\varepsilon}_h^{\Gamma}} = 0 ,
	\end{align*}
		which indicates $\bm{L}_h = \bm{0}$,
	\begin{align}
	\bm{Q}_0^b\bm{u}_h-\bm{\hat{u}}_h &= \bm{0}, \quad \text{ on} \ \partial K\setminus  \mathcal{\varepsilon}_h^{\Gamma}, \forall K\in \mathcal{T}_h,\label{eq1} \\
	\{\bm{Q}_m^b\bm{u}_h-\bm{\tilde{u}}_h\} &= \bm{0},  \quad \text{ on}\  \mathcal{\varepsilon}_h^{\Gamma}.\label{eq2}
	\end{align}
	where $\{\cdot\} $ is defined by  $\{w\} = \frac{1}{2}(w_1+w_2)$ with $w_i=w|_{\bar{\Omega}_i\cap\Gamma} $ for $i=1,2$. These  relations, together with 
	the equation \eqref{xhdg1}, the definition of projection and integration by parts, yield
	\begin{align*}
	0=(\nu^{-1}\bm{L}_h,\bm{w})_{\mathcal{T}_h} +(\nabla_h \bm{u}_h, \bm{w})_{\mathcal{T}_h} - \langle \bm{Q}_0^b\bm{u}_h-\bm{\hat{u}}_h,\bm{w} \bm{n}\rangle_{\partial\mathcal{T}_h\setminus \mathcal{\varepsilon}_h^{\Gamma}} -\langle \bm{Q}_m^b\bm{u}_h-\bm{\tilde{u}}_h,\bm{w}\bm{n}\rangle_{*,\mathcal{\varepsilon}_h^{\Gamma}} = (\nabla_h \bm{u}_h, \bm{w})_{\mathcal{T}_h} .
	\end{align*}
	Taking the $\bm{w}_h = \nabla_h \bm{u}_h$ in this relation gives  $\nabla_h \bm{u}_h = \bm{0}$. Then  $\bm{u}_h $ is piecewise constant, which, together with \eqref{eq1},  \eqref{eq2} and the fact that $\bm{\hat{u}}_h = \bm{0}$ on $\partial \Omega$, implies 
	\begin{align*}
	  \bm{u}_h =  \bm{\hat{u}}_h  = \bm{\tilde{u}}_h  =\bm{0}.
	\end{align*}
The thing left is to show $p_h=0$. In view of  \eqref{xhdg5} and \eqref{xhdg6}, we have 
	\begin{align*}
	\llbracket p_h\rrbracket = 0, &\ \text{ on} \ \partial K \text{ and }  \mathcal{\varepsilon}_h^{\Gamma}, \forall K\in \mathcal{T}_h.  
	\end{align*}
Thus, $p_h$ is a constant in $\Omega$, and the fact  $p_h\in L_0^2(\Omega)$ means $p_h=0$. This completes the proof.
\end{proof}

\section{A priori error estimation: a case of fold line/plane interface}
This section is devoted to an error analysis of the X-HDG scheme \eqref{xhdgscheme} with a fold line/plane  interface $\Gamma$.   We note that in this case $\Gamma_K=\Gamma_{K,h}$ is a line segment/quadrilateral  for any $K\in \mathcal{T}_h^{\Gamma}$, and
$\bm{g}_N^{\Gamma_h} = \bm{g}_N^{\Gamma}$ in the equation \eqref{xhdg6}.

\subsection{Optimal error estimation for velocity gradient and pressure}

Firstly we introduce the following standard estimates  for the  $L^2$ orthogonal projection operators $Q_r$ and $Q_r^b$ (cf. \cite{chen-Xie2016WG,HanChenWangXie2019Extended}).

\begin{lem}\label{ineq} 
	Let $s$ be an integer with $1\leq s\leq r+1$. For any $K\in \mathcal{T}_h$ and $ v\in H^s\left((K\cap\Omega_{1})\cup (K\cap\Omega_{2}) \right)$, we have
	\begin{align*}
	\lVert v-Q_{r}v\rVert_{0,K}+h\lVert v-Q_{r}v\rVert_{1,K}&\lesssim h_K^s\lVert v\rVert_{s,K}, \\
	\lVert v-Q_{r}v\rVert_{0,\partial K}+\lVert v-Q_{r}v\rVert_{0,\Gamma_{K}}&\lesssim h_K^{s-1/2}\lVert v\rVert_{s,K}, \\ 
	\lVert v-Q_{r}^bv\rVert_{0,\partial K}  + \lVert v-Q_{r}^bv\rVert_{0,\Gamma_{K}}
	&\lesssim h_K^{s-1/2}\lVert v\rVert_{s,K},
	\end{align*}
	where the notations $\lVert \cdot\rVert_{s,K} $ and $\lVert\cdot\rVert_{0,\partial K}$ are understood respectively  as   $\lVert \cdot\rVert_{s,K} = \sum\limits_{i=1}^2\lVert \cdot\rVert_{s,K\cap\Omega_{i}}$ and $\lVert \cdot\rVert_{s,\partial K} = \sum\limits_{i=1}^{2}\lVert \cdot\rVert_{s,\partial K\cap  \bar{\Omega}_{i}}$ when  $K\in  \mathcal{T}_h^\Gamma$. 
\end{lem}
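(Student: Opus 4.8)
The plan is to establish the three estimates by reducing everything to classical polynomial approximation theory on each piece $K\cap\Omega_i$, using the fact that the $L^2$ projections $Q_r$ and $Q_r^b$ decouple across the interface (they are defined piece-by-piece on $K\cap\Omega_1$ and $K\cap\Omega_2$, and on $F\cap\bar\Omega_1$ and $F\cap\bar\Omega_2$). So for a non-interface element $K$ the statement is just the standard Bramble--Hilbert / scaling argument, and for an interface element it suffices to prove each bound on a single subpiece $D = K\cap\Omega_i$ (and its boundary portions) and then sum over $i=1,2$. I would first record, via the shape-regularity assumptions (M1)--(M2) and assumption (A1), that each subpiece $D=K\cap\Omega_i$ is itself a polygon/polyhedron that is star-shaped with respect to a ball of radius comparable to $h_K$ (here the fold-line/plane hypothesis, or more generally (A1)--(A2), guarantees $D$ is not degenerate and its diameter is comparable to $h_K$); this is what lets me apply a Bramble--Hilbert estimate with constants depending only on $\theta_*, l_*$ and not on how $\Gamma$ cuts $K$.

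Next, for the interior estimate $\lVert v-Q_r v\rVert_{0,K}+h\lVert v-Q_r v\rVert_{1,K}\lesssim h_K^s\lVert v\rVert_{s,K}$, I would work on each $D=K\cap\Omega_i$: since $Q_r v|_D$ is the $L^2(D)$-best approximation from $P_r(D)$, for any $q\in P_r(D)$ we have $\lVert v-Q_rv\rVert_{0,D}\le\lVert v-q\rVert_{0,D}$, and choosing $q$ to be an averaged Taylor polynomial of $v$ on the star-shaped ball gives $\lVert v-q\rVert_{0,D}\lesssim h_K^s\lvert v\rvert_{s,D}$ by the standard polynomial approximation lemma (Brenner--Scott, Dupont--Scott). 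For the $H^1$-seminorm piece one cannot use best-approximation directly, so I would instead write $\nabla(v-Q_rv)=\nabla(v-q)+\nabla(q-Q_rv)$, bound $\nabla(v-q)$ by Bramble--Hilbert, and bound $\nabla(q-Q_rv)$ using an inverse inequality on $D$ — here again star-shapedness w.r.t.\ a ball of radius $\gtrsim h_K$ gives the inverse estimate $\lVert\nabla p\rVert_{0,D}\lesssim h_K^{-1}\lVert p\rVert_{0,D}$ for $p\in P_r(D)$ with a constant depending only on $\theta_*$ — together with $\lVert q-Q_rv\rVert_{0,D}\le\lVert q-v\rVert_{0,D}+\lVert v-Q_rv\rVert_{0,D}\lesssim h_K^s\lvert v\rvert_{s,D}$. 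Summing over $i$ yields the claim.

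For the trace estimates $\lVert v-Q_rv\rVert_{0,\partial K}+\lVert v-Q_rv\rVert_{0,\Gamma_K}\lesssim h_K^{s-1/2}\lVert v\rVert_{s,K}$ and the analogous bound for $Q_r^b$, the mechanism is a scaled trace inequality combined with the interior estimate. On each subpiece $D=K\cap\Omega_i$, for $\phi\in H^1(D)$ the trace theorem on the star-shaped domain gives $\lVert\phi\rVert_{0,\partial D}^2\lesssim h_K^{-1}\lVert\phi\rVert_{0,D}^2+h_K\lVert\nabla\phi\rVert_{0,D}^2$, with a constant uniform in the interface location because of (M1)--(M2), (A1); applying this to $\phi=v-Q_rv$ and inserting the already-proved interior bounds $\lVert v-Q_rv\rVert_{0,D}\lesssim h_K^s\lVert v\rVert_{s,D}$ and $\lVert\nabla(v-Q_rv)\rVert_{0,D}\lesssim h_K^{s-1}\lVert v\rVert_{s,D}$ produces $\lVert v-Q_rv\rVert_{0,\partial D}\lesssim h_K^{s-1/2}\lVert v\rVert_{s,D}$. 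Since $\Gamma_K$ is one of the faces of $D$ (a line segment or planar quadrilateral under the fold-line/plane hypothesis, or more generally contained in a controlled portion of $\partial D$ via (A1)--(A2)), the same bound holds on $\Gamma_K$. For $Q_r^b$ the argument is even more direct: on each boundary piece $F\cap\bar\Omega_i$, $Q_r^b v$ is the $L^2$-best polynomial approximation there, so $\lVert v-Q_r^bv\rVert_{0,F\cap\bar\Omega_i}\le\lVert v-q\rVert_{0,F\cap\bar\Omega_i}$ for any $q\in P_r$; bounding the right side by a trace-type Bramble--Hilbert estimate on $D$ (e.g.\ choosing $q$ to be an averaged Taylor polynomial of $v$ and using $\lVert v-q\rVert_{0,\partial D}\lesssim h_K^{s-1/2}\lvert v\rvert_{s,D}$, which follows from the scaled trace inequality plus interior approximation) gives the result; sum over $i$ and over the relevant faces.

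The main obstacle — and the only place real care is needed — is the \emph{uniformity of all constants in the location of the interface relative to the mesh}. A priori the subpieces $D=K\cap\Omega_i$ could be arbitrarily thin slivers, which would blow up both the inverse inequality constant and the trace inequality constant. The key point to nail down is therefore that under (M1), (M2), (A1) (and the fold-line/plane setting of this section, which makes $\Gamma_K$ a flat cut), each nonempty subpiece $K\cap\Omega_i$ is a polytope that is itself star-shaped with respect to a ball of radius bounded below by $c\,h_K$ with $c=c(\theta_*,l_*)>0$, and with diameter $\le h_K$; granting this geometric fact, every estimate above follows from the standard library results (averaged Taylor polynomial approximation, inverse inequalities, trace inequalities) on such "chunky" domains with constants depending only on $\theta_*,l_*$, as asserted. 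I would cite \cite{chen-Xie2016WG,HanChenWangXie2019Extended} for precisely this chain of arguments rather than re-deriving the geometry in detail.
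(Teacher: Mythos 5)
The paper offers no proof of this lemma at all---it is quoted as ``standard'' with a pointer to \cite{chen-Xie2016WG,HanChenWangXie2019Extended}---so the only issue is whether your argument is sound, and it is not: the geometric claim that you yourself identify as the crux is false. Nothing in (M1), (M2), (A1), (A2) prevents $\Gamma$ from shaving an arbitrarily thin sliver off an interface element; robustness with respect to an \emph{arbitrary} cut position is exactly what the paper claims (the hidden constants are asserted to be independent of the location of the interface relative to the mesh), so you may not assume that $K\cap\Omega_i$ is star-shaped with respect to a ball of radius $c\,h_K$. Once that fails, the two tools you apply on the cut piece $D=K\cap\Omega_i$ fail with it: for $D=(0,1)\times(0,\epsilon)$ and $p(x,y)=y$ one has $\lVert\nabla p\rVert_{0,D}/\lVert p\rVert_{0,D}\sim\epsilon^{-1}$, so the inverse inequality used in your $H^1$ step is not uniform in the cut, and for $\phi\equiv1$ the scaled trace inequality $\lVert\phi\rVert_{0,\partial D}^2\lesssim h_K^{-1}\lVert\phi\rVert_{0,D}^2+h_K\lVert\nabla\phi\rVert_{0,D}^2$ has a constant of order $\epsilon^{-1}$. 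Concretely, taking $v=\sin(2\pi x)$ on that sliver and $v=0$ on the other side shows that a bound such as $\lVert v-Q_0v\rVert_{0,\Gamma_K}\lesssim h_K^{1/2}\lVert v\rVert_{1,K\cap\Omega_i}$ with a cut-independent constant cannot be reached along your route: the left-hand side is $O(1)$ while your right-hand side is $O(\sqrt{\epsilon})$.

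The standard way these estimates are actually obtained (and what the cited references do) avoids any shape control of the cut pieces: one works with a stable Sobolev extension $E_i(v|_{\Omega_i})$, uses that $Q_r$ on $K\cap\Omega_i$ and $Q_r^b$ on $F\cap\bar\Omega_i$ are best $L^2$ approximations so they can be compared with a polynomial constructed for $E_iv$ on the \emph{whole} shape-regular element $K$, and invokes inverse and trace inequalities on $K$ itself (including a Hansbo--Hansbo-type inequality bounding $\lVert\cdot\rVert_{0,\Gamma_K}$ by norms over all of $K$), so that no chunkiness of $K\cap\Omega_i$ is needed; the resulting bounds then carry $\lVert E_iv\rVert_{s,K}$ on the right, which is how the stated $\lVert v\rVert_{s,K}$ has to be read on cut elements. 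Your argument is fine for uncut elements, and the pure $L^2$ best-approximation observation survives, but the $H^1$-seminorm and trace estimates require the extension mechanism; as written, the proposal rests on a geometric fact that the paper's assumptions do not provide and that the cited works neither state nor need.
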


For simplicity of presentation, denote
\begin{align}\label{erroroperator}
\bm{e}_h^L: &= \bm{L}_h - \bm{Q}_0\bm{L}, \quad  \bm{e}_h^u: =\bm{u}_h - \bm{Q}_1\bm{u}, \quad  e_h^p: = p_h - Q_0p, \quad
 \bm{e}_h^{\hat{u}}: = \bm{\hat{u}}_h - \bm{Q}_0^b\bm{u}, \quad  \bm{e}_h^{\tilde{u}}: = \bm{\tilde{u}}_h - \bm{Q}_m^b\bm{u},
\end{align}
where, for $ i = 1,2$ and   $m=0,1$,
\begin{align*}
&(\bm{Q}_0\bm{L})|_{K\cap\Omega_{i}}:= \bm{Q}_0(\bm{L}|_{K\cap\Omega_{i}}), \quad 
(\bm{Q}_1\bm{u})|_{K\cap\Omega_{i}}:= \bm{Q}_1(\bm{u}|_{K\cap\Omega_{i}}), \ \quad 
(Q_0p)|_{K\cap\Omega_{i}}:= Q_0(p|_{K\cap\Omega_{i}}),\quad  \forall K\in\mathcal{T}_h ,\nonumber\\
&(\bm{Q}_0^b\bm{u})|_{F\cap\Omega_{i}}:= \bm{Q}_0^b(\bm{u}|_{F\cap\Omega_{i}}), \quad  \forall F\in\varepsilon_h^{*}, \\
&(\bm{Q}_m^b\bm{u})|_F:= 
\bm{Q}_m^b(\bm{u}|_{F}), \quad \forall F\in\mathcal{\varepsilon}_h^{\Gamma}.
\end{align*} 
Then we have the following lemma for error equations.
\begin{lem}
	For all $(\bm{w},\bm{v},q,\bm{\mu},\bm{\tilde{\mu}})\in \bm{W}_h\times \bm{V}_h\times Q_h\times \bm{M}_h(0)\times \bm{\tilde{M}}_h$, it holds
	\begin{subequations}\label{errorequation}
		\begin{align}
		(\nu^{-1}\bm{e}_h^L,\bm{w})_{\mathcal{T}_h} 
		-\langle \bm{e}_h^{\hat{u}},\bm{w}\bm{n} \rangle_{\partial\mathcal{T}_h\setminus \mathcal{\varepsilon}_h^{\Gamma}} -\langle \bm{e}_h^{\tilde{u}},\bm{w}\bm{n} \rangle_{*,\mathcal{\varepsilon}_h^{\Gamma}} &= 0 , \label{errorequation1}\\
		 (\alpha \bm{e}_h^u,\bm{v})_{\mathcal{T}_h}+ \langle \tau(Q_0^b\bm{e}_h^u-\bm{e}_h^{\hat{u}}),\bm{v}\rangle_{\partial\mathcal{T}_h\setminus \mathcal{\varepsilon}_h^{\Gamma}} 
		+  \langle \tau(Q_m^b\bm{e}_h^u-\bm{e}_h^{\tilde{u}}),\bm{v}\rangle_{*,\mathcal{\varepsilon}_h^{\Gamma}}  &= \sum_{i=1}^{2}L_i(\bm{v}) ,\label{errorequation2}\\
		 \langle\bm{e}_h^{\hat{u}}\cdot\bm{n},q\rangle_{\partial\mathcal{T}_h\setminus \mathcal{\varepsilon}_h^{\Gamma}} + \langle\bm{e}_h^{\tilde{u}}\cdot\bm{n},q\rangle_{*,\mathcal{\varepsilon}_h^{\Gamma}}&= 0,\label{errorequation3}\\
		\langle \bm{e}_h^L\bm{n},\bm{\mu}\rangle_{\partial \mathcal{T}_h\setminus \mathcal{\varepsilon}_h^{\Gamma}}-\langle e_h^p\bm{n},\bm{\mu}\rangle_{\partial \mathcal{T}_h\setminus \mathcal{\varepsilon}_h^{\Gamma}}-\langle \tau(Q_0^b\bm{e}_h^u-\bm{e}_h^{\hat{u}}),\bm{\mu}\rangle_{\partial \mathcal{T}_h\setminus \mathcal{\varepsilon}_h^{\Gamma}}&= -L_1(\bm{\mu}), \label{errorequation4}\\
		\langle \bm{e}_h^L\bm{n},\bm{\tilde{\mu}}\rangle_{*,\mathcal{\varepsilon}_h^{\Gamma}}-\langle e_h^p\bm{n},\bm{\tilde{\mu}}\rangle_{*,\mathcal{\varepsilon}_h^{\Gamma}}-\langle \tau(Q_m^b\bm{e}_h^u-\bm{e}_h^{\tilde{u}}),\bm{\tilde{\mu}}\rangle_{*,\mathcal{\varepsilon}_h^{\Gamma}}&= -L_2(\bm{\tilde{\mu}}) \label{errorequation5},
		\end{align}
	\end{subequations}
	where for any $\psi \in H^1(\Omega_1\cup\Omega_2)\cup\bm{W}_h\cup  \bm{V}_h\cup  \bm{M}_h\cup  \bm{\tilde{M}}_h$, 
	\begin{align*}
	L_1(\psi) &= \langle (\bm{Q}_0\bm{L}-\bm{L})\bm{n},\psi \rangle_{\partial \mathcal{T}_h\setminus \mathcal{\varepsilon}_h^{\Gamma}}+\langle \tau (\bm{Q}_0^b(\bm{u}-\bm{Q}_1\bm{u})),\psi \rangle_{\partial \mathcal{T}_h\setminus \mathcal{\varepsilon}_h^{\Gamma}} +\langle (Q_0p-p)\bm{n},\psi \rangle_{\partial \mathcal{T}_h\setminus \mathcal{\varepsilon}_h^{\Gamma}}, \\
	L_2(\psi) &= \langle (\bm{Q}_0\bm{L}-\bm{L})\bm{n},\psi \rangle_{*,\mathcal{\varepsilon}_h^{\Gamma}}+ \langle \tau (\bm{Q}_m^b(\bm{u}-\bm{Q}_1\bm{u})),\psi \rangle_{*,\mathcal{\varepsilon}_h^{\Gamma}}+\langle (Q_0p-p)\bm{n},\psi \rangle_{*,\mathcal{\varepsilon}_h^{\Gamma}}.
	\end{align*}
\end{lem}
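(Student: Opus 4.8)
This is a standard Galerkin--orthogonality (consistency) result, and the plan is to substitute the $L^2$-projections $(\bm{Q}_0\bm{L},\bm{Q}_1\bm{u},Q_0p,\bm{Q}_0^b\bm{u},\bm{Q}_m^b\bm{u})$ of the exact solution into the scheme \eqref{xhdgscheme} and measure the resulting defect. First I would rewrite, test function by test function, the left-hand side of each equation of \eqref{errorequation} as the difference between the left-hand side of the corresponding equation of \eqref{xhdgscheme} evaluated at the discrete solution $(\bm{L}_h,\bm{u}_h,p_h,\hat{\bm{u}}_h,\tilde{\bm{u}}_h)$ and the same expression evaluated at those projections; here one uses the elementary identities $\bm{Q}_0^b\bm{e}_h^u-\bm{e}_h^{\hat{u}}=(\bm{Q}_0^b\bm{u}_h-\hat{\bm{u}}_h)+\bm{Q}_0^b(\bm{u}-\bm{Q}_1\bm{u})$ on $\partial\mathcal{T}_h\setminus\varepsilon_h^{\Gamma}$ and $\bm{Q}_m^b\bm{e}_h^u-\bm{e}_h^{\tilde{u}}=(\bm{Q}_m^b\bm{u}_h-\tilde{\bm{u}}_h)+\bm{Q}_m^b(\bm{u}-\bm{Q}_1\bm{u})$ on $\varepsilon_h^{\Gamma}$, which peel off precisely the stabilization terms appearing inside $L_1,L_2$. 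Since the discrete solution satisfies \eqref{xhdgscheme} exactly, the discrete-solution part of each difference equals, respectively, $0$, $(\bm{f},\bm{v})$, $0$, $0$, and $\langle\bm{g}_N^{\Gamma_h},\tilde{\bm{\mu}}\rangle_{*,\varepsilon_h^{\Gamma}}$, so the task reduces to computing the projection defect of each of \eqref{xhdg1}--\eqref{xhdg6}.

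Next I would compute those defects from the first-order system \eqref{firstorderscheme}, element-by-element integration by parts over the pieces $K\cap\Omega_i$, and the $L^2$-orthogonality of $\bm{Q}_0,Q_0$ against the piecewise-constant spaces $\bm{W}_h,Q_h$ and against $\nabla_h\bm{v},\nabla_h\cdot\bm{v}$ (which are piecewise constant since $\bm{v}\in P_1$). For \eqref{xhdg1} one writes $\nu^{-1}\bm{Q}_0\bm{L}=\bm{Q}_0(\nu^{-1}\bm{L})=\bm{Q}_0(\nabla\bm{u})$ (valid since $\nu$ is piecewise constant compatible with the subdomain splitting), integrates by parts, and uses that $\bm{w}$ is piecewise constant, so $\nabla_h\cdot\bm{w}=0$ and $\bm{w}\bm{n}$ is constant on each face piece, whence $\bm{u}$ may be replaced by $\bm{Q}_0^b\bm{u}$ resp.\ $\bm{Q}_m^b\bm{u}$ on the faces; the defect vanishes and \eqref{errorequation1} follows. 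The same argument tested against $q$ piecewise constant, using $\nabla\cdot\bm{u}=0$, gives a zero defect for \eqref{xhdg3} and hence \eqref{errorequation3}. For \eqref{xhdg2} one writes $\bm{f}=-\nabla\cdot(\bm{L}-p\bm{I})+\alpha\bm{u}$, uses $(\alpha\bm{Q}_1\bm{u},\bm{v})=(\alpha\bm{u},\bm{v})$ (true because $\alpha$ is piecewise constant and $\bm{v}\in P_1$), integrates $-(\nabla\cdot(\bm{L}-p\bm{I}),\bm{v})_{\mathcal{T}_h}$ by parts over the $K\cap\Omega_i$ and integrates back after replacing $\bm{L},p$ by $\bm{Q}_0\bm{L},Q_0p$ in the piecewise-constant-tested volume terms; what is left, together with the stabilization remnants $\langle\tau\bm{Q}_0^b(\bm{u}-\bm{Q}_1\bm{u}),\bm{v}\rangle$ and $\langle\tau\bm{Q}_m^b(\bm{u}-\bm{Q}_1\bm{u}),\bm{v}\rangle$ already separated in the first step, is precisely $\sum_{i=1}^2 L_i(\bm{v})$, giving \eqref{errorequation2}. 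Equations \eqref{xhdg5}--\eqref{xhdg6} need no integration by parts: one adds and subtracts the exact flux $(\bm{L}-p\bm{I})\bm{n}$ and uses its single-valuedness across interior faces of $\mathcal{T}_h$ together with the fact that the test functions for \eqref{errorequation4} lie in $\bm{M}_h(0)$ on $\partial\Omega$; for \eqref{errorequation5} one invokes the transmission condition $\llbracket(\bm{L}-p\bm{I})\bm{n}\rrbracket=\bm{g}_N^{\Gamma}=\bm{g}_N^{\Gamma_h}$, which is exactly where the data term $\langle\bm{g}_N^{\Gamma_h},\tilde{\bm{\mu}}\rangle_{*,\varepsilon_h^{\Gamma}}$ cancels and only $-L_2(\tilde{\bm{\mu}})$ remains.

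The projection and integration-by-parts manipulations are routine; the step requiring care is the bookkeeping on $\varepsilon_h^{*}\cup\varepsilon_h^{\Gamma}$. One must keep straight that on interior faces of $\mathcal{T}_h$ the exact quantities $\bm{u}$ and $(\bm{L}-p\bm{I})\bm{n}$ are single-valued, so in the sums over $\partial\mathcal{T}_h\setminus\varepsilon_h^{\Gamma}$ only the projection errors $\bm{Q}_0\bm{L}-\bm{L}$, $Q_0p-p$, $\bm{Q}_0^b(\bm{u}-\bm{Q}_1\bm{u})$ survive; that on $\partial\Omega$ the test functions for \eqref{errorequation4} belong to $\bm{M}_h(0)$; and that on $\varepsilon_h^{\Gamma}$ the one-sided normals $\bm{n}_1,\bm{n}_2$ inside the $\langle\cdot,\cdot\rangle_{*,\varepsilon_h^{\Gamma}}$-pairings must be tracked so that the two one-sided exact fluxes recombine into $\llbracket(\bm{L}-p\bm{I})\bm{n}\rrbracket$ and reproduce the datum $\bm{g}_N^{\Gamma_h}$. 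Matching the signs of these interior-face, boundary, and interface contributions against the definitions of $L_1$ and $L_2$ is the only genuinely delicate point.
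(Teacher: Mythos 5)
Your proposal is correct and follows essentially the same route as the paper: you substitute the $L^2$ projections $(\bm{Q}_0\bm{L},\bm{Q}_1\bm{u},Q_0p,\bm{Q}_0^b\bm{u},\bm{Q}_m^b\bm{u})$ into the scheme, identify the consistency defects via elementwise integration by parts and projection orthogonality (zero defect for \eqref{xhdg1} and \eqref{xhdg3}, the $L_1,L_2$ terms for \eqref{xhdg2}, \eqref{xhdg5}, \eqref{xhdg6}), and subtract the discrete equations, using single-valuedness of $(\bm{L}-p\bm{I})\bm{n}$ on interior faces, $\bm{\mu}\in\bm{M}_h(0)$ on $\partial\Omega$, and the interface condition on $\mathcal{\varepsilon}_h^{\Gamma}$ for the last two equations. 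The paper's proof states the resulting projected identities more tersely, but the decomposition and the face/interface bookkeeping you describe are the same.
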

\begin{proof}
	Let $(\bm{L},\bm{u},p)$ be the solution of \eqref{firstorderscheme}. From the definitions of the projection operators we obtain
	\begin{align*}
	(\nu^{-1}\bm{Q}_0\bm{L},\bm{w})_{\mathcal{T}_h}-\langle \bm{Q}_0^b\bm{u},\bm{w}\bm{n} \rangle_{\partial \mathcal{T}_h\setminus\mathcal{\varepsilon}_h^{\Gamma}} -\langle \bm{Q}_m^b\bm{u},\bm{w}\bm{n} \rangle_{*,\mathcal{\varepsilon}_h^{\Gamma}} &=0,    \\
     \langle (\bm{Q}_0\bm{L}-\bm{L})\bm{n},\bm{v} \rangle_{\partial \mathcal{T}_h\setminus\mathcal{\varepsilon}_h^{\Gamma}}+\langle (\bm{Q}_0\bm{L}-\bm{L})\bm{n},\bm{v} \rangle_{*,\mathcal{\varepsilon}_h^{\Gamma}} 
	+ (\alpha\bm{Q}_1\bm{u},\bm{v} \rangle_{\mathcal{T}_h}&\\
	-\langle (Q_0p-p)\bm{n},\bm{v} \rangle_{\partial \mathcal{T}_h\setminus\mathcal{\varepsilon}_h^{\Gamma}}-\langle (Q_0p-p)\bm{n},\bm{v} \rangle_{*,\mathcal{\varepsilon}_h^{\Gamma}} &= (\bm{f},\bm{v}), \\
	\langle \bm{Q}_0^b \bm{u}\cdot\bm{n},q\rangle_{\partial\mathcal{T}_h\setminus \mathcal{\varepsilon}_h^{\Gamma}} + \langle  \bm{Q}_0^b\bm{u}\cdot\bm{n},q\rangle_{*,\mathcal{\varepsilon}_h^{\Gamma}}&= 0,
	\end{align*}
for any $(\bm{w},\bm{v},q)\in \bm{W}_h\times \bm{V}_h\times Q_h$. Then, subtracting \eqref{xhdg1}, \eqref{xhdg2} and \eqref{xhdg3}  respectively from the above three equations yields   \eqref{errorequation1}, \eqref{errorequation2} and \eqref{errorequation3}. Finally, \eqref{errorequation4}, \eqref{errorequation5} follows from \eqref{xhdg5}, \eqref{xhdg6} and the relations
	\begin{align*}
		\langle (\bm{L}-p\bm{I})\bm{n},\bm{\bm{\mu}} \rangle_{\partial \mathcal{T}_h\setminus\mathcal{\varepsilon}_h^{\Gamma}} = \bm{0} , \quad 
		\langle (\bm{L}-p\bm{I})\bm{n},\bm{\tilde{\mu}} \rangle_{\mathcal{\varepsilon}_h^{\Gamma}} = \langle \bm{g}_N^{\Gamma_h},\bm{\tilde{\mu}}  \rangle_{\mathcal{\varepsilon}_h^{\Gamma}}		
		\end{align*}
		for $\bm{\mu}\in  \bm{M}_h(0),\ \bm{\tilde{\mu}}\in \bm{\tilde{M}}_h$.
\end{proof}

Define a seminorm  $\interleave \cdot\interleave: \bm{w},\bm{v},\bm{\mu},\bm{\tilde{\mu}} \in {L^2}(\Omega)^{d\times d} \times {L^2}(\Omega)^d\times   {L}^2(\varepsilon_h^*)^d\times {L^2}(\ \varepsilon_h^\Gamma)^d$ by 
\begin{align}\label{energynorm}
\interleave (\bm{w},\bm{v},\bm{\mu},\tilde{\mu})\interleave^2 : = \lVert \nu^{-\frac{1}{2}}\bm{w}\rVert_{0,\mathcal{T}_h}^2 +\lVert \alpha^{\frac{1}{2}}\bm{v}\rVert_{0,\mathcal{T}_h}^2 +\lVert \tau^{1/2}(\bm{Q}_0^b\bm{v}-\bm{\mu})\rVert_{\partial\mathcal{T}_h\setminus \mathcal{\varepsilon}_h^{\Gamma}}^2+\lVert \tau^{1/2}(\bm{Q}_m^b\bm{v}-\bm{\tilde{\mu}})\rVert_{*,\mathcal{\varepsilon}_h^{\Gamma}}^2, 
\end{align}
where
$$ 
\lVert  \cdot \rVert_{0,\mathcal{T}_h}^2 := \sum_{K\in\mathcal{T}_h}
\lVert \cdot\rVert_{0,K
}^2, 
\ \
\lVert \cdot\rVert_{\partial\mathcal{T}_h\setminus\varepsilon_h^\Gamma}^2: = \sum\limits_{K\in  \mathcal{T}_h}\langle\cdot,\cdot\rangle_{\partial K\backslash \varepsilon_h^\Gamma},\ \
 \lVert \cdot \rVert_{ \varepsilon_h^\Gamma}^2 := \langle \cdot,\cdot\rangle_{*,\varepsilon_h^\Gamma}. $$

\begin{lem}\label{estimateofenergynorm}
	Let $(\bm{L},\bm{u},p)\in {H^{1}}(\Omega_1\cup \Omega_2)^{d\times d}\times {H^{2}}(\Omega_1\cup \Omega_2)^d\times H^1(\Omega_1\cup \Omega_2) $ and $(\bm{L}_h,\bm{u}_h,p_h,\bm{\hat{u}}_h,\bm{\tilde{u}}_h)\in \bm{W}_h\times V_h\times Q_h\times M_h(g)\times \tilde{M}_h$ be the solutions of the  problem \eqref{firstorderscheme} and the X-HDG scheme \eqref{xhdgscheme}, respectively.  Then   it holds
	\begin{align}\label{estimategrade}
	\lVert  \nu^{\frac{1}{2}}\nabla_h \bm{e}_h^u\rVert_{0,\mathcal{T}_h} \lesssim \interleave (\bm{e}_h^L,\bm{e}_h^u,\bm{e}_h^{\hat{u}} ,\bm{e}_h^{\tilde{u}})\interleave  \lesssim h (\lVert \nu^{\frac{1}{2}}\bm{u}\rVert_{2,\Omega_{1}\cup\Omega_{2}}+\lVert \nu^{-\frac{1}{2}}p\rVert_{1,\Omega_{1}\cup\Omega_{2}}) .
	\end{align}
\end{lem}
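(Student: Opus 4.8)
The plan is to derive \cref{estimategrade} in two parts: the lower bound $\lVert \nu^{1/2}\nabla_h \bm{e}_h^u\rVert_{0,\mathcal{T}_h} \lesssim \interleave(\bm{e}_h^L,\bm{e}_h^u,\bm{e}_h^{\hat u},\bm{e}_h^{\tilde u})\interleave$ is essentially a discrete trace/inverse inequality argument, while the upper bound requires testing the error equations against the errors themselves and then estimating the right-hand side functionals $L_1,L_2$ via \cref{ineq}.

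First I would establish the lower bound. Starting from the error equation \eqref{errorequation1}, I would choose $\bm{w} = \nabla_h \bm{e}_h^u$ (which lies in $\bm{W}_h$ since $\bm{e}_h^u$ is piecewise linear, so its gradient is piecewise constant). This yields an identity expressing $(\nu^{-1}\bm{e}_h^L, \nabla_h\bm{e}_h^u)_{\mathcal{T}_h}$ in terms of $\langle \bm{e}_h^{\hat u}, (\nabla_h\bm{e}_h^u)\bm{n}\rangle$ and $\langle \bm{e}_h^{\tilde u}, (\nabla_h\bm{e}_h^u)\bm{n}\rangle$. Then, integrating by parts on each element and using the definition of $\bm{Q}_0^b$ and $\bm{Q}_m^b$ (so that $\langle \bm{Q}_0^b\bm{e}_h^u,(\nabla_h\bm{e}_h^u)\bm{n}\rangle = \langle \bm{e}_h^u,(\nabla_h\bm{e}_h^u)\bm{n}\rangle$ since $(\nabla_h\bm{e}_h^u)\bm{n}$ is piecewise constant on each face), I would rewrite $\lVert \nu^{1/2}\nabla_h\bm{e}_h^u\rVert_{0,\mathcal{T}_h}^2 = (\nu^{-1}\bm{e}_h^L, \nu\nabla_h\bm{e}_h^u)_{\mathcal{T}_h} - \langle \bm{Q}_0^b\bm{e}_h^u - \bm{e}_h^{\hat u}, (\nu\nabla_h\bm{e}_h^u)\bm{n}\rangle_{\partial\mathcal{T}_h\setminus\varepsilon_h^\Gamma} - \langle \bm{Q}_m^b\bm{e}_h^u - \bm{e}_h^{\tilde u}, (\nu\nabla_h\bm{e}_h^u)\bm{n}\rangle_{*,\varepsilon_h^\Gamma}$. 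Bounding the first term by Cauchy--Schwarz gives $\lVert\nu^{-1/2}\bm{e}_h^L\rVert \, \lVert\nu^{1/2}\nabla_h\bm{e}_h^u\rVert$; for the boundary terms I would apply the discrete trace inequality $\lVert (\nabla_h\bm{e}_h^u)\bm{n}\rVert_{0,\partial K\cap\bar\Omega_i} \lesssim h_K^{-1/2}\lVert\nabla_h\bm{e}_h^u\rVert_{0,K\cap\Omega_i}$ (valid on the cut pieces under assumptions (M1)--(M2), (A1)--(A2)), together with the definition $\tau|_{F\cap\bar\Omega_i} = \nu_i h_K^{-1}$, so each boundary term is bounded by $\lVert\tau^{1/2}(\bm{Q}_0^b\bm{e}_h^u-\bm{e}_h^{\hat u})\rVert \,\lVert\nu^{1/2}\nabla_h\bm{e}_h^u\rVert$ and similarly for the interface term. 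Dividing through by $\lVert\nu^{1/2}\nabla_h\bm{e}_h^u\rVert$ gives the lower bound.

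Next, for the middle-to-right inequality, I would test \eqref{errorequation1}--\eqref{errorequation5} with $(\bm{w},\bm{v},q,\bm{\mu},\bm{\tilde\mu}) = (\bm{e}_h^L, \bm{e}_h^u, e_h^p, \bm{e}_h^{\hat u}, \bm{e}_h^{\tilde u})$ and add, exactly as in the proof of \cref{wellposed}; the pressure and $L_1,L_2$ cross-terms involving $e_h^p$ combine to cancel (using \eqref{errorequation3} against \eqref{errorequation4}+\eqref{errorequation5}), leaving $\interleave(\bm{e}_h^L,\bm{e}_h^u,\bm{e}_h^{\hat u},\bm{e}_h^{\tilde u})\interleave^2 = L_1(\bm{e}_h^u) - L_1(\bm{e}_h^{\hat u}) + L_2(\bm{e}_h^u) - L_2(\bm{e}_h^{\tilde u})$ — more precisely the combination of $L_i$ terms that survives. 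Then I would estimate each piece of $L_1,L_2$ using \cref{ineq} with $r=0$, $s=1$ for $\bm{L}$ and $p$, and $r=1$, $s=2$ for $\bm{u}$: the term $\langle(\bm{Q}_0\bm{L}-\bm{L})\bm{n},\psi\rangle$ is bounded by $(\sum_K h_K^{-1}\lVert\bm{Q}_0\bm{L}-\bm{L}\rVert_{0,\partial K}^2)^{1/2}$ times a $\tau^{1/2}$-weighted norm of $\psi$, which via \cref{ineq} gives $\lesssim h\lVert\nu^{1/2}\bm{u}\rVert_{2}$ after absorbing $\nu$ through the relation $\bm{L}=\nu\nabla\bm{u}$; the $\tau$-term $\langle\tau\bm{Q}_0^b(\bm{u}-\bm{Q}_1\bm{u}),\psi\rangle$ is handled the same way using the second estimate of \cref{ineq} and $\tau = \nu_i h_K^{-1}$; and the pressure term $\langle(Q_0p-p)\bm{n},\psi\rangle$ gives $\lesssim h\lVert\nu^{-1/2}p\rVert_{1}$. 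Writing $\psi \in \{\bm{e}_h^u,\bm{e}_h^{\hat u},\bm{e}_h^{\tilde u}\}$ and using $\lVert\tau^{1/2}(\bm{Q}_0^b\bm{e}_h^u-\bm{e}_h^{\hat u})\rVert \le \interleave\cdots\interleave$ (and similarly for the interface part, plus a discrete trace bound to control $\lVert\tau^{1/2}\bm{e}_h^u\rVert_{\partial\mathcal{T}_h}$ by a combination of $\lVert\nu^{1/2}\nabla_h\bm{e}_h^u\rVert$ — which we just bounded — and the jump terms in the norm), I would get the RHS bounded by $h(\lVert\nu^{1/2}\bm{u}\rVert_2 + \lVert\nu^{-1/2}p\rVert_1)\cdot\interleave(\bm{e}_h^L,\bm{e}_h^u,\bm{e}_h^{\hat u},\bm{e}_h^{\tilde u})\interleave$, and dividing finishes the estimate.

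The main obstacle I anticipate is controlling the terms where $\psi = \bm{e}_h^u$ appears in $L_1,L_2$: unlike $\bm{e}_h^{\hat u}$ and $\bm{e}_h^{\tilde u}$, the quantity $\lVert\tau^{1/2}\bm{e}_h^u\rVert_{\partial\mathcal{T}_h\setminus\varepsilon_h^\Gamma}$ is not directly a summand of the energy seminorm, so I must split $\bm{e}_h^u = (\bm{Q}_0^b\bm{e}_h^u - \bm{e}_h^{\hat u}) + (\bm{e}_h^{\hat u} - \bm{Q}_0^b\bm{e}_h^u + \bm{e}_h^u)$ and bound $\lVert\tau^{1/2}(\bm{e}_h^u - \bm{Q}_0^b\bm{e}_h^u)\rVert_{\partial K} \lesssim \lVert\nu^{1/2}\nabla_h\bm{e}_h^u\rVert_{0,K}$ via a trace/approximation inequality on the cut element — this is where the shape-regularity assumptions (M1)--(M2) and the interface-resolution assumptions (A1)--(A2) are essential, ensuring the hidden constants are independent of how $\Gamma$ cuts the mesh. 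The careful bookkeeping of the $\nu_i$-weights (making sure all constants are independent of $\nu_1,\nu_2,\alpha_1,\alpha_2$), and verifying that the discrete trace inequalities hold uniformly on the possibly-small cut pieces $K\cap\Omega_i$, will be the delicate points; the algebra of cancellation among the $e_h^p$ and $L_i$ terms is routine and parallels the well-posedness proof.
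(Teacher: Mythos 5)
Your proposal is correct and follows essentially the same route as the paper: the lower bound by testing \eqref{errorequation1} with $\nu\nabla_h\bm{e}_h^u$, integrating by parts, using the projection property of $\bm{Q}_0^b,\bm{Q}_m^b$ and the discrete trace inequality with $\tau=\nu_i h_K^{-1}$; the upper bound by testing the error equations with the errors, summing so that the $e_h^p$ cross-terms cancel and $\interleave\cdot\interleave^2=L_1(\bm{e}_h^u-\bm{e}_h^{\hat u})+L_2(\bm{e}_h^u-\bm{e}_h^{\tilde u})$, then estimating via Lemma \ref{ineq}, splitting $\bm{e}_h^u-\bm{e}_h^{\hat u}=(\bm{Q}_0^b\bm{e}_h^u-\bm{e}_h^{\hat u})+(\bm{e}_h^u-\bm{Q}_0^b\bm{e}_h^u)$ and absorbing the latter through the already-proved bound $\lVert\nu^{1/2}\nabla_h\bm{e}_h^u\rVert_{0,\mathcal{T}_h}\lesssim\interleave\cdot\interleave$. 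Apart from minor notational slips (e.g. the decomposition in your last paragraph should be applied to $\bm{e}_h^u-\bm{e}_h^{\hat u}$, and the test function should be $\nu\nabla_h\bm{e}_h^u$), this is the paper's proof.
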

\begin{proof} We first show 
	\begin{align}\label{leftone}
	\lVert  \nu^{\frac{1}{2}}\nabla_h \bm{e}_h^u\rVert_{0,\mathcal{T}_h} \lesssim 
	\interleave (\bm{e}_h^L,\bm{e}_h^u,\bm{e}_h^{\hat{u}},\bm{e}_h^{\tilde{u}})\interleave.
		\end{align}
	In fact,	taking $\bm{w} = \nu\nabla \bm{e}_h^u$ in \eqref{errorequation1} and applying integration by parts yield
	\begin{align*}
	(\bm{e}_h^L,\nabla \bm{e}_h^u)_{\mathcal{T}_h}-(\nu\nabla \bm{e}_h^u, \nabla \bm{e}_h^u)_{\mathcal{T}_h}-\langle \nu (\bm{e}_h^u-\bm{e}_h^{\hat{u}}),\nabla \bm{e}_h^u\bm{n} \rangle_{\partial\mathcal{T}_h\setminus \mathcal{\varepsilon}_h^{\Gamma}} -\langle \nu (\bm{e}_h^u-\bm{e}_h^{\tilde{u}}),\nabla \bm{e}_h^u\bm{n} \rangle_{*,\mathcal{\varepsilon}_h^{\Gamma}} &= 0,
	\end{align*}
	which, together with the property of projection, implies
	\begin{align*}
	\lVert\nu^{\frac{1}{2}}\nabla \bm{e}_h^u\rVert_{0,\mathcal{T}_h}^2 = (\bm{e}_h^L,\nabla \bm{e}_h^u)_{{\mathcal{T}_h}}-\langle \nu (\bm{Q}_0^b\bm{e}_h^u-\bm{e}_h^{\hat{u}}),\nabla \bm{e}_h^u\bm{n} \rangle_{\partial\mathcal{T}_h\setminus \mathcal{\varepsilon}_h^{\Gamma}} -\langle \nu (\bm{Q}_m^b\bm{e}_h^u-\bm{e}_h^{\tilde{u}}),\nabla \bm{e}_h^u\bm{n} \rangle_{*,\mathcal{\varepsilon}_h^{\Gamma}} 
	\end{align*}
	for $m=0,1$.
	In view of the Cauchy-Schwarz inequality, the trace inequality and the definition of $\interleave \cdot\interleave$, we then have 
	\begin{align*}
	\lVert \nu^{\frac{1}{2}}\nabla \bm{e}_h^u\rVert_{0,\mathcal{T}_h} \leq \lVert \nu^{-\frac{1}{2}}\bm{e}_h^L\rVert_{0,\mathcal{T}_h}+\lVert  \tau^{1/2}(\bm{Q}_0^b\bm{e}_h^u-\bm{e}_h^{\hat{u}})\rVert_{\partial\mathcal{T}_h\setminus \mathcal{\varepsilon}_h^{\Gamma}} +\lVert  \tau^{1/2}(\bm{Q}_m^b\bm{e}_h^u-\bm{e}_h^{\tilde{u}})\rVert_{\mathcal{\varepsilon}_h^{\Gamma}}\leq \interleave (\bm{e}_h^L,\bm{e}_h^u,\bm{e}_h^{\hat{u}},\bm{e}_h^{\tilde{\mu}})\interleave,
	\end{align*}
	where we recall that $\tau$ is given by \eqref{stabilization function}. 
	
	The thing left is to estimate the term $\interleave (\bm{e}_h^L,\bm{e}_h^u,\bm{e}_h^{\hat{u}},\bm{e}_h^{\tilde{\mu}})\interleave$.
	Taking $(\bm{w},\bm{v},q,\bm{\mu} ,\bm{\tilde{\mu}}) = (\bm{e}_h^L,\bm{e}_h^u,e_h^p,\bm{e}_h^{\hat{u}},\bm{e}_h^{\tilde{\mu}})$ in  \eqref{errorequation} and adding up the five equations, we  obtain 
$$\interleave (\bm{e}_h^L,\bm{e}_h^u,\bm{e}_h^{\hat u} ,\bm{e}_h^{\tilde{\mu}}) \interleave^2 = \sum_{i=1}^{2}E_i,$$	
where
	\begin{align*}
	E_1 &= \langle (\bm{Q}_0\bm{L}-\bm{L})\bm{n},\bm{e}_h^u-\bm{e}_h^{\hat{u}} \rangle_{\partial\mathcal{T}_h\setminus \mathcal{\varepsilon}_h^{\Gamma}}+\langle (\bm{Q}_0\bm{L}-\bm{L})\bm{n},\bm{e}_h^u-\bm{e}_h^{\tilde{u}} \rangle_{*,\mathcal{\varepsilon}_h^{\Gamma}}\\
	&\qquad + \langle (Q_0p-p)\bm{n},\bm{e}_h^u-\bm{e}_h^{\hat{u}} \rangle_{\partial\mathcal{T}_h\setminus \mathcal{\varepsilon}_h^{\Gamma}} +\langle (Q_0p-p)\bm{n},\bm{e}_h^u-\bm{e}_h^{\tilde{u}} \rangle_{*,\mathcal{\varepsilon}_h^{\Gamma}} ,\\
	E_2 &= \langle \tau \bm{Q}_0^b(\bm{u}-\bm{Q}_1\bm{u}),\bm{e}_h^u-\bm{e}_h^{\hat{u}} \rangle_{\partial\mathcal{T}_h\setminus \mathcal{\varepsilon}_h^{\Gamma}} +\langle \tau \bm{Q}_m^b(\bm{u}-\bm{Q}_1\bm{u}),\bm{e}_h^u-\bm{e}_h^{\tilde{u}} \rangle_{*,\mathcal{\varepsilon}_h^{\Gamma}} .
	\end{align*}	
	We just need to estimate   $E_i$  ($i = 1,2$) term by term. From Lemma \ref{ineq}, the Cauchy-Schwarz inequality and \eqref{leftone} it follows
	\begin{align*}
	E_1 
	\lesssim& h\big(\lVert \nu^{-\frac{1}{2}}\bm{L}\rVert_{1,\Omega_{1}\cup\Omega_{2}} +  \lVert \nu^{-\frac{1}{2}}p\rVert_{1,\Omega_{1}\cup\Omega_{2}}\big)\big(\lVert \tau^{\frac{1}{2}}(\bm{e}_h^u-\bm{e}_h^{\hat{u}}) \rVert_{0,\partial\mathcal{T}_h\setminus\mathcal{\varepsilon}_h^{\Gamma}} +\lVert \tau^{\frac{1}{2}}(\bm{e}_h^u-\bm{e}_h^{\tilde{u}}) \rVert_{\mathcal{\varepsilon}_h^{\Gamma}} \big)\\
	\lesssim & h \big(\lVert \nu^{-\frac{1}{2}}\bm{L}\rVert_{1,\Omega_{1}\cup\Omega_{2}} +  \lVert \nu^{-\frac{1}{2}}p\rVert_{1,\Omega_{1}\cup\Omega_{2}}\big)\big(\lVert \tau^{\frac{1}{2}}(\bm{Q}_0^b\bm{e}_h^u-\bm{e}_h^{\hat{u}}) \rVert_{0,\partial\mathcal{T}_h\setminus\mathcal{\varepsilon}_h^{\Gamma}} +\lVert \tau^{\frac{1}{2}}(\bm{Q}_m^b\bm{e}_h^u-\bm{e}_h^{\tilde{u}}) \rVert_{\mathcal{\varepsilon}_h^{\Gamma}} \\
	&\quad + \lVert \tau^{\frac{1}{2}}(\bm{e}_h^u-\bm{Q}_0^b\bm{e}_h^u) \rVert_{0,\partial\mathcal{T}_h\setminus\mathcal{\varepsilon}_h^{\Gamma}} +\lVert \tau^{\frac{1}{2}}(\bm{e}_h^u-\bm{Q}_m^b\bm{e}_h^u) \rVert_{\mathcal{\varepsilon}_h^{\Gamma}} \big)\\
	\lesssim& h \big(\lVert \nu^{-\frac{1}{2}}\bm{L}\rVert_{1,\Omega_{1}\cup\Omega_{2}}  + \lVert \nu^{-\frac{1}{2}}p\rVert_{1,\Omega_{1}\cup\Omega_{2}}\big) \big( \interleave (\bm{e}_h^L,\bm{e}_h^u,\bm{e}_h^{\hat{u}} ,\bm{e}_h^{\tilde{u}})\interleave  + \lVert \nu^{\frac{1}{2}}\nabla_h \bm{e}_h^u\rVert_{0,\mathcal{T}_h}  \big)\\
	\lesssim& h\big(\lVert \nu^{\frac{1}{2}}\bm{u}\rVert_{2,\Omega_{1}\cup\Omega_{2}} + \lVert \nu^{-\frac{1}{2}}p\rVert_{1,\Omega_{1}\cup\Omega_{2}}\big)\interleave (\bm{e}_h^L,\bm{e}_h^u,\bm{e}_h^{\hat{u}} ,\bm{e}_h^{\tilde{u}})\interleave .
	\end{align*}
	Similarly,  we have
	\begin{align*}
	E_2 
	\leq& \big( \lVert \tau (\bm{u}-\bm{Q}_1\bm{u}) \rVert_{0,\partial\mathcal{T}_h\setminus \mathcal{\varepsilon}_h^{\Gamma}} +\lVert \tau (\bm{u}-\bm{Q}_1\bm{u})\rVert_{0, \mathcal{\varepsilon}_h^{\Gamma}} \big) \big( \lVert \tau^{\frac{1}{2}}(\bm{e}_h^u-\bm{e}_h^{\hat{u}}) \rVert_{0,\partial\mathcal{T}_h\setminus\mathcal{\varepsilon}_h^{\Gamma}} +\lVert \tau^{\frac{1}{2}}(\bm{e}_h^u-\bm{e}_h^{\tilde{u}}) \rVert_{\mathcal{\varepsilon}_h^{\Gamma}} \big)\\
	\lesssim& h \lVert \nu^{\frac{1}{2}}\bm{u}\rVert_{2,\Omega_{1}\cup\Omega_{2}} \interleave (\bm{e}_h^L,\bm{e}_h^u,\bm{e}_h^{\hat{u}} ,\bm{e}_h^{\tilde{u}})\interleave.
	\end{align*}
As a result, the desired conclusion follows.
\end{proof}

\begin{lem}\label{est_p}
	Under the same conditions as in Lemma \ref{estimateofenergynorm}, it holds 
	\begin{align}\label{estimatep}
	\lVert e_h^p\rVert_{0,\mathcal{T}_h} \lesssim  h(\nu_{max}^{\frac{1}{2}}+\alpha_{max}^{\frac{1}{2}})
	\left(\lVert \nu^{\frac{1}{2}}\bm{u}\rVert_{2,\Omega_{1}\cup\Omega_{2}}+\lVert \nu^{-\frac{1}{2}}p\rVert_{1,\Omega_{1}\cup\Omega_{2}} \right ).
	\end{align}
	Here $
	\nu_{max} = \max\limits_{i=1,2}{\nu_i}$ and $\alpha_{max} = \max\limits_{i=1,2}{\alpha_i}$. 
\end{lem}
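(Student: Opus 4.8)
The plan is to prove \eqref{estimatep} by a Fortin/inf--sup argument, testing the error equations \eqref{errorequation2}, \eqref{errorequation4} and \eqref{errorequation5} with projections of a Stokes dual velocity. Since $e_h^p\in L^2_0(\Omega)$ there is $\bm{\phi}\in H^1_0(\Omega)^d$ with $\nabla\cdot\bm{\phi}=e_h^p$ and $\lVert\bm{\phi}\rVert_{1,\Omega}\lesssim\lVert e_h^p\rVert_{0,\mathcal{T}_h}$. I take $\bm{v}=\bm{Q}_1\bm{\phi}\in\bm{V}_h$, $\bm{\mu}=\bm{Q}_0^b\bm{\phi}$ (which lies in $\bm{M}_h(0)$ because $\bm{\phi}|_{\partial\Omega}=0$) and $\bm{\tilde\mu}=\bm{Q}_m^b\bm{\phi}\in\bm{\tilde M}_h$. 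Since $e_h^p$, $\bm{e}_h^L$ and the (flat) face normals are piecewise constant on each face, the $L^2$-orthogonality of $\bm{Q}_0^b,\bm{Q}_m^b$ lets me replace $\bm{Q}_0^b\bm{\phi},\bm{Q}_m^b\bm{\phi}$ by $\bm{\phi}$ in the pairings $\langle e_h^p\bm{n},\cdot\rangle$ and $\langle\bm{e}_h^L\bm{n},\cdot\rangle$; then element-wise integration by parts (on each $K$, resp.\ each $K_i$ for interface elements, using continuity of $\bm{\phi}$ across $\Gamma_h$) gives $\langle e_h^p\bm{n},\bm{Q}_0^b\bm{\phi}\rangle_{\partial\mathcal{T}_h\setminus\varepsilon_h^\Gamma}+\langle e_h^p\bm{n},\bm{Q}_m^b\bm{\phi}\rangle_{*,\varepsilon_h^\Gamma}=(e_h^p,\nabla_h\cdot\bm{\phi})_{\mathcal{T}_h}=\lVert e_h^p\rVert_{0,\mathcal{T}_h}^2$ and $\langle\bm{e}_h^L\bm{n},\bm{Q}_0^b\bm{\phi}\rangle_{\partial\mathcal{T}_h\setminus\varepsilon_h^\Gamma}+\langle\bm{e}_h^L\bm{n},\bm{Q}_m^b\bm{\phi}\rangle_{*,\varepsilon_h^\Gamma}=(\bm{e}_h^L,\nabla_h\bm{\phi})_{\mathcal{T}_h}=:A_1$. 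Adding \eqref{errorequation4} and \eqref{errorequation5} with these test functions yields
$$\lVert e_h^p\rVert_{0,\mathcal{T}_h}^2 = A_1 - A_2 + L_1(\bm{Q}_0^b\bm{\phi})+L_2(\bm{Q}_m^b\bm{\phi}),$$
where $A_2:=\langle\tau(\bm{Q}_0^b\bm{e}_h^u-\bm{e}_h^{\hat u}),\bm{Q}_0^b\bm{\phi}\rangle_{\partial\mathcal{T}_h\setminus\varepsilon_h^\Gamma}+\langle\tau(\bm{Q}_m^b\bm{e}_h^u-\bm{e}_h^{\tilde u}),\bm{Q}_m^b\bm{\phi}\rangle_{*,\varepsilon_h^\Gamma}$.

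The key issue is that $A_2$ cannot be estimated directly: because $\tau\sim\nu h^{-1}$ one only has $\lVert\tau^{1/2}\bm{Q}_0^b\bm{\phi}\rVert\sim h^{-1}\lVert\bm{\phi}\rVert_0$, a factor $h^{-1}$ too large. The remedy is to split $\bm{Q}_0^b\bm{\phi}=\bm{Q}_1\bm{\phi}+(\bm{Q}_0^b\bm{\phi}-\bm{Q}_1\bm{\phi})$ (and likewise $\bm{Q}_m^b\bm{\phi}$ on $\varepsilon_h^\Gamma$) and to notice that the $\bm{Q}_1\bm{\phi}$-part of $A_2$ is exactly the stabilization part of the left-hand side of \eqref{errorequation2} with $\bm{v}=\bm{Q}_1\bm{\phi}$, hence equals $L_1(\bm{Q}_1\bm{\phi})+L_2(\bm{Q}_1\bm{\phi})-(\alpha\bm{e}_h^u,\bm{Q}_1\bm{\phi})_{\mathcal{T}_h}$. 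Substituting this back and combining the $L_i$'s by linearity gives
$$\lVert e_h^p\rVert_{0,\mathcal{T}_h}^2 = A_1 + (\alpha\bm{e}_h^u,\bm{Q}_1\bm{\phi})_{\mathcal{T}_h} - R + L_1(\bm{Q}_0^b\bm{\phi}-\bm{Q}_1\bm{\phi})+L_2(\bm{Q}_m^b\bm{\phi}-\bm{Q}_1\bm{\phi}),$$
with $R:=\langle\tau(\bm{Q}_0^b\bm{e}_h^u-\bm{e}_h^{\hat u}),\bm{Q}_0^b\bm{\phi}-\bm{Q}_1\bm{\phi}\rangle_{\partial\mathcal{T}_h\setminus\varepsilon_h^\Gamma}+\langle\tau(\bm{Q}_m^b\bm{e}_h^u-\bm{e}_h^{\tilde u}),\bm{Q}_m^b\bm{\phi}-\bm{Q}_1\bm{\phi}\rangle_{*,\varepsilon_h^\Gamma}$. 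The gain is that every increment $\bm{Q}_0^b\bm{\phi}-\bm{Q}_1\bm{\phi}$, $\bm{Q}_m^b\bm{\phi}-\bm{Q}_1\bm{\phi}$ now has face/trace norm $\lesssim h_K^{1/2}\lVert\bm{\phi}\rVert_{1,K}$ by Lemma \ref{ineq}, which exactly absorbs the $\nu h^{-1}$ weight in $\tau$.

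It remains to bound the five pieces. First, $A_1\le\lVert\nu^{-1/2}\bm{e}_h^L\rVert_{0,\mathcal{T}_h}\lVert\nu^{1/2}\nabla_h\bm{\phi}\rVert_{0,\mathcal{T}_h}\le\nu_{\max}^{1/2}\interleave(\bm{e}_h^L,\bm{e}_h^u,\bm{e}_h^{\hat u},\bm{e}_h^{\tilde u})\interleave\lVert\bm{\phi}\rVert_{1,\Omega}$. For $R$, use $\lVert\tau^{1/2}(\bm{Q}_0^b\bm{e}_h^u-\bm{e}_h^{\hat u})\rVert_{\partial\mathcal{T}_h\setminus\varepsilon_h^\Gamma}\le\interleave(\cdots)\interleave$ together with $\lVert\tau^{1/2}(\bm{Q}_0^b\bm{\phi}-\bm{Q}_1\bm{\phi})\rVert_{\partial\mathcal{T}_h\setminus\varepsilon_h^\Gamma}\lesssim\nu_{\max}^{1/2}\lVert\bm{\phi}\rVert_{1,\Omega}$ (Lemma \ref{ineq}), and analogously on $\varepsilon_h^\Gamma$; for the $\alpha$-term, $(\alpha\bm{e}_h^u,\bm{Q}_1\bm{\phi})_{\mathcal{T}_h}\le\alpha_{\max}^{1/2}\interleave(\cdots)\interleave\lVert\bm{\phi}\rVert_{1,\Omega}$. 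For $L_1(\bm{Q}_0^b\bm{\phi}-\bm{Q}_1\bm{\phi})$ and $L_2(\bm{Q}_m^b\bm{\phi}-\bm{Q}_1\bm{\phi})$, bound each of the three constituent pairings by Cauchy--Schwarz and Lemma \ref{ineq}, using $\lVert\tau^{1/2}\bm{Q}_0^b(\bm{u}-\bm{Q}_1\bm{u})\rVert\lesssim h\lVert\nu^{1/2}\bm{u}\rVert_2$ and converting the unweighted norms via $\lVert\bm{L}\rVert_1\le\nu_{\max}^{1/2}\lVert\nu^{1/2}\bm{u}\rVert_2$ and $\lVert p\rVert_1\le\nu_{\max}^{1/2}\lVert\nu^{-1/2}p\rVert_1$, which gives $\lesssim h\,\nu_{\max}^{1/2}(\lVert\nu^{1/2}\bm{u}\rVert_2+\lVert\nu^{-1/2}p\rVert_1)\lVert\bm{\phi}\rVert_{1,\Omega}$. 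Inserting Lemma \ref{estimateofenergynorm} (so that $\interleave(\cdots)\interleave\lesssim h(\lVert\nu^{1/2}\bm{u}\rVert_2+\lVert\nu^{-1/2}p\rVert_1)$) and $\lVert\bm{\phi}\rVert_{1,\Omega}\lesssim\lVert e_h^p\rVert_{0,\mathcal{T}_h}$ into the second identity gives $\lVert e_h^p\rVert_{0,\mathcal{T}_h}^2\lesssim h(\nu_{\max}^{1/2}+\alpha_{\max}^{1/2})(\lVert\nu^{1/2}\bm{u}\rVert_2+\lVert\nu^{-1/2}p\rVert_1)\lVert e_h^p\rVert_{0,\mathcal{T}_h}$, and dividing by $\lVert e_h^p\rVert_{0,\mathcal{T}_h}$ finishes the proof.

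I expect the main obstacle to be the second step: the naive bound on the stabilization pairing $A_2$ is off by one power of $h$, so one is forced to play $A_2$ off against the interior momentum error equation \eqref{errorequation2} \emph{before} estimating, which is precisely what turns the "bad" argument $\bm{Q}_0^b\bm{\phi}$ into the harmless increment $\bm{Q}_0^b\bm{\phi}-\bm{Q}_1\bm{\phi}$ and the controllable $\alpha$- and consistency-terms. A secondary technicality is keeping the trace and projection estimates on interface elements robust with respect to how $\Gamma_h$ cuts the mesh, which is where the interface versions of Lemma \ref{ineq} and assumptions (A1)--(A2), (M1)--(M2) are used.
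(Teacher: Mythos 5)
Your proposal is correct and follows essentially the same route as the paper: the same inf--sup choice of $\bm{\phi}$ with $\nabla\cdot\bm{\phi}=e_h^p$, the same test functions $(\bm{Q}_1\bm{\phi},\bm{Q}_0^b\bm{\phi},\bm{Q}_m^b\bm{\phi})$ in \eqref{errorequation2}, \eqref{errorequation4}--\eqref{errorequation5}, and the same key cancellation that converts the stabilization pairing against $\bm{Q}_0^b\bm{\phi}$ into the increments $\bm{Q}_0^b\bm{\phi}-\bm{Q}_1\bm{\phi}$, $\bm{Q}_m^b\bm{\phi}-\bm{Q}_1\bm{\phi}$ (your $A_1$, $-R$, $(\alpha\bm{e}_h^u,\bm{Q}_1\bm{\phi})$ and $L_i$ terms are exactly the paper's $T_1+T_2+T_3$). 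The only cosmetic difference is that you integrate the $\langle\bm{e}_h^L\bm{n},\cdot\rangle$ pairings back to $(\bm{e}_h^L,\nabla_h\bm{\phi})$ directly rather than to $(\bm{e}_h^L,\nabla_h\bm{Q}_1\bm{\phi})$ plus boundary increments, which is equivalent.
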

\begin{proof}
	Since $e_h^p \in L_0^2(\Omega)$,   there exists  $\bm{v^*}\in H_0^1(\Omega)$ such that 
	\begin{align}\label{37}
	\lVert e_h^p\rVert_{0,\mathcal{T}_h} \lesssim 
	\frac{(\nabla\cdot\bm{v^*},e_h^p)}{\lVert \bm{v^*}\rVert_{1,\Omega_{1}\cup\Omega_{2}}}.
	\end{align}
	In view of   integration by parts, the  properties of projections, and  \eqref{errorequation2},  \eqref{errorequation4} and  \eqref{errorequation5} with   $(\bm{v},\bm{\mu},\bm{\tilde{\mu}}) = ( \bm{\bm{Q}_1v^*}, \bm{Q}_0^b\bm{v^*},\bm{Q}_m^b\bm{v^*})$,  we have 
	\begin{align*}
	(\nabla\cdot\bm{v^*},e_h^p)_{\mathcal{T}_h} =& \langle \bm{Q}_0^b\bm{v^*},e_h^p \bm{n}\rangle_{\partial \mathcal{T}_h\setminus\mathcal{\varepsilon}_h^{\Gamma}}+\langle \bm{Q}_m^b\bm{v^*} ,e_h^p\bm{n}\rangle_{*,\mathcal{\varepsilon}_h^{\Gamma}}=T_1+T_2+T_3, 
	\end{align*}
%
where
	\begin{align*}
	T_1 = & \langle \bm{e}_h^L\bm{n},\bm{Q}_0^b\bm{v^*}\rangle_{\partial\mathcal{T}_h\setminus \mathcal{\varepsilon}_h^{\Gamma}}+\langle \bm{e}_h^L\bm{n},\bm{Q}_m^b\bm{v^*}\rangle_{*,\mathcal{\varepsilon}_h^{\Gamma}} +(\alpha \bm{e}_h^u,\bm{Q}_1\bm{v^*})_{\mathcal{T}_h},\\ 
	T_2 = & \langle \tau(\bm{Q}_0^b\bm{e}_h^u-\bm{e}_h^{\hat{u}}),\bm{Q}_1\bm{v^*}-\bm{Q}_0^b\bm{v^*}\rangle_{\partial \mathcal{T}_h\setminus \mathcal{\varepsilon}_h^{\Gamma}}+\langle \tau(\bm{Q}_m^b\bm{e}_h^u-\bm{e}_h^{\tilde{u}}),\bm{Q}_1\bm{v^*}-\bm{Q}_m^b\bm{v^*}\rangle_{*,\mathcal{\varepsilon}_h^{\Gamma}}, \\
	T_3 = & -L_1(\bm{Q}_1\bm{v^*}-\bm{Q}_0^b\bm{v^*}) -L_2(\bm{Q}_1\bm{v^*}-\bm{Q}_m^b\bm{v^*}) .
	\end{align*}
	 From integration by parts, the Cauchy-Schwarz inequality and the properties of projections it follows
	\begin{align*}
	T_1 =&\ \ (\bm{e}_h^L,\nabla_h\bm{Q}_1\bm{v^*})_{\mathcal{T}_h}+\langle \bm{e}_h^L\bm{n},\bm{Q}_1\bm{v^*}-\bm{Q}_0^b\bm{v^*}\rangle_{\partial\mathcal{T}_h\setminus \mathcal{\varepsilon}_h^{\Gamma}}+\langle \bm{e}_h^L\bm{n},\bm{Q}_1\bm{v^*}-\bm{Q}_m^b\bm{v^*}\rangle_{*,\mathcal{\varepsilon}_h^{\Gamma}}+(\alpha \bm{e}_h^u,\bm{Q}_1\bm{v^*})_{\mathcal{T}_h} \\
	\lesssim &\ \ \left( \nu_{max}^{\frac{1}{2}}\lVert \nu^{-\frac{1}{2}}\bm{e}_h^L\rVert_{0,\mathcal{T}_h}
	+ \alpha_{max}^{\frac{1}{2}}\lVert \alpha^{\frac{1}{2}}\bm{e}_h^u\rVert_{0,\mathcal{T}_h}\right)\lVert \bm{v^*}\rVert_{1,\Omega} \\
	 \lesssim& \ \ (\nu_{max}^{\frac{1}{2}}+\alpha_{max}^{\frac{1}{2}})\interleave (\bm{e}_h^L,\bm{e}_h^u,\bm{e}_h^{\hat{u}} ,\bm{e}_h^{\tilde{u}})\interleave \lVert  \bm{v^*}\rVert_{1,\Omega} , \\
	T_2 \lesssim &\ \  \nu_{max}^{\frac{1}{2}}\interleave  (\bm{e}_h^L,\bm{e}_h^u,\bm{e}_h^{\hat{u}} ,\bm{e}_h^{\tilde{u}})\interleave \lVert \bm{v^*}\rVert_{1,\Omega},\\
	T_3  
	\lesssim& \ \  \nu_{max}^{\frac{1}{2}}h\left(\lVert \nu^{-\frac{1}{2}}\bm{L}\rVert_{1,\Omega_{1}\cup\Omega_{2}}+\lVert \nu^{\frac{1}{2}}\bm{u}\rVert_{2,\Omega_{1}\cup\Omega_{2}}+\lVert \nu^{-\frac{1}{2}}p\rVert_{1,\Omega_{1}\cup\Omega_{2}} \right) \lVert \bm{v^*}\rVert_{1,\Omega} .
	\end{align*}
	So by \eqref{37} and the relation \eqref{firstorderscheme-a} we have
	\begin{align*}
	\lVert e_h^p\rVert_{0,\mathcal{T}_h}  \lesssim (\nu_{max}^{\frac{1}{2}}+\alpha_{max}^{\frac{1}{2}})\left(
	\interleave (\bm{e}_h^L,\bm{e}_h^u,\bm{e}_h^{\hat{u}} ,\bm{e}_h^{\tilde{u}})\interleave+ h( \lVert \nu^{\frac{1}{2}}\bm{u}\rVert_{2,\Omega_{1}\cup\Omega_{2}}+\lVert \nu^{-\frac{1}{2}}p\rVert_{1,\Omega_{1}\cup\Omega_{2}}\right),
	\end{align*}
	which, together with Lemma \ref{estimateofenergynorm}, yields the desired result \eqref{estimatep}.
\end{proof}

Based on Lemmas \ref{ineq}, \ref{estimateofenergynorm}, \ref{est_p}, and the triangle inequality,  we can easily obtain the following main result.
\begin{thm}\label{1norm}
	Let $(\bm{L},\bm{u},p)\in {H^{1}}(\Omega_1\cup \Omega_2)^{d\times d}\times {H^{2}}(\Omega_1\cup \Omega_2)^d\times H^1(\Omega_1\cup \Omega_2) $ and $(\bm{L}_h,\bm{u}_h,p_h,\bm{\hat{u}}_h,\bm{\tilde{u}}_h)\in \bm{W}_h\times V_h\times Q_h\times M_h(g)\times \tilde{M}_h$ be the solutions of the  problem \eqref{firstorderscheme} and the X-HDG scheme \eqref{xhdgscheme}, respectively. Then it holds
\begin{align}
	 \lVert \nu^{-\frac{1}{2}}(\bm{L}-\bm{L}_h)\rVert_{0,\mathcal{T}_h}& +\lVert \nu^{\frac{1}{2}}(\nabla \bm{u}- \nabla_h\bm{u}_h)\rVert_{0,\mathcal{T}_h}
	 + (\nu_{max}^{\frac{1}{2}}+\alpha_{max}^{\frac{1}{2}})^{-1} \lVert p-p_h\rVert_{0,\mathcal{T}_h} \nonumber \\	
	& \lesssim  h \left(\lVert \nu^{\frac{1}{2}}\bm{u}\rVert_{2,\Omega_{1}\cup\Omega_{2}} +\lVert \nu^{-\frac{1}{2}}p\rVert_{1,\Omega_{1}\cup\Omega_{2}}\right),\\
		  \lVert \alpha^{\frac{1}{2}}(  \bm{u}- \bm{u}_h)\rVert_{0,\mathcal{T}_h} 
	&\lesssim  h \left(\lVert \nu^{\frac{1}{2}}\bm{u}\rVert_{2,\Omega_{1}\cup\Omega_{2}}+\lVert \alpha^{\frac{1}{2}}\bm{u}\rVert_{1,\Omega_{1}\cup\Omega_{2}}+\lVert \nu^{-\frac{1}{2}}p\rVert_{1,\Omega_{1}\cup\Omega_{2}}\right).
	\end{align}
\end{thm}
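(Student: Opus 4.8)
The plan is to estimate each of the four error quantities by splitting it, according to the decompositions introduced in~\eqref{erroroperator}, into an $L^2$-projection (approximation) part and a discrete part, bounding the former by Lemma~\ref{ineq} and the latter by Lemma~\ref{estimateofenergynorm} or Lemma~\ref{est_p}, and then reconciling the various coefficient weights by the triangle inequality.

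First I would handle the flux $\bm{L}-\bm{L}_h$ and the velocity gradient $\nabla\bm{u}-\nabla_h\bm{u}_h$. Writing $\bm{L}-\bm{L}_h=(\bm{L}-\bm{Q}_0\bm{L})-\bm{e}_h^L$, the triangle inequality gives $\lVert\nu^{-\frac{1}{2}}(\bm{L}-\bm{L}_h)\rVert_{0,\mathcal{T}_h}\le\lVert\nu^{-\frac{1}{2}}(\bm{L}-\bm{Q}_0\bm{L})\rVert_{0,\mathcal{T}_h}+\lVert\nu^{-\frac{1}{2}}\bm{e}_h^L\rVert_{0,\mathcal{T}_h}$; Lemma~\ref{ineq} (with $s=1$, $r=0$, applied on each piece $K\cap\Omega_i$, on which $\nu$ is constant) bounds the first term by $h\lVert\nu^{-\frac{1}{2}}\bm{L}\rVert_{1,\Omega_1\cup\Omega_2}$, and since $\bm{L}=\nu\nabla\bm{u}$ by~\eqref{firstorderscheme-a} with $\nu$ piecewise constant this equals $h\lVert\nu^{\frac{1}{2}}\nabla\bm{u}\rVert_{1,\Omega_1\cup\Omega_2}\le h\lVert\nu^{\frac{1}{2}}\bm{u}\rVert_{2,\Omega_1\cup\Omega_2}$, while the second term is at most $\interleave(\bm{e}_h^L,\bm{e}_h^u,\bm{e}_h^{\hat{u}},\bm{e}_h^{\tilde{u}})\interleave$, which Lemma~\ref{estimateofenergynorm} controls by $h(\lVert\nu^{\frac{1}{2}}\bm{u}\rVert_{2,\Omega_1\cup\Omega_2}+\lVert\nu^{-\frac{1}{2}}p\rVert_{1,\Omega_1\cup\Omega_2})$. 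For the gradient I would use $\nabla\bm{u}-\nabla_h\bm{u}_h=(\nabla\bm{u}-\nabla_h\bm{Q}_1\bm{u})-\nabla_h\bm{e}_h^u$; after weighting by $\nu^{\frac{1}{2}}$ the first part is $\lesssim h\lVert\nu^{\frac{1}{2}}\bm{u}\rVert_{2,\Omega_1\cup\Omega_2}$ by the $h\lVert v-Q_rv\rVert_{1,K}$ estimate in Lemma~\ref{ineq} with $s=2$, while the second part is exactly the quantity bounded in the left-hand inequality of Lemma~\ref{estimateofenergynorm}.

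Next comes the pressure. From $p-p_h=(p-Q_0p)-e_h^p$ and Lemma~\ref{ineq} one obtains $\lVert p-p_h\rVert_{0,\mathcal{T}_h}\lesssim h\lVert p\rVert_{1,\Omega_1\cup\Omega_2}+\lVert e_h^p\rVert_{0,\mathcal{T}_h}$; multiplying by $(\nu_{max}^{\frac{1}{2}}+\alpha_{max}^{\frac{1}{2}})^{-1}$, using $\lVert p\rVert_{1,\Omega_1\cup\Omega_2}\le\nu_{max}^{\frac{1}{2}}\lVert\nu^{-\frac{1}{2}}p\rVert_{1,\Omega_1\cup\Omega_2}$ to absorb the projection part into $h\lVert\nu^{-\frac{1}{2}}p\rVert_{1,\Omega_1\cup\Omega_2}$, and invoking Lemma~\ref{est_p} for $\lVert e_h^p\rVert_{0,\mathcal{T}_h}$, gives the pressure estimate. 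Finally, for the $\alpha$-weighted velocity I would write $\bm{u}-\bm{u}_h=(\bm{u}-\bm{Q}_1\bm{u})-\bm{e}_h^u$: Lemma~\ref{ineq} with $s=1$ gives $\lVert\alpha^{\frac{1}{2}}(\bm{u}-\bm{Q}_1\bm{u})\rVert_{0,\mathcal{T}_h}\lesssim h\lVert\alpha^{\frac{1}{2}}\bm{u}\rVert_{1,\Omega_1\cup\Omega_2}$, and $\lVert\alpha^{\frac{1}{2}}\bm{e}_h^u\rVert_{0,\mathcal{T}_h}\le\interleave(\bm{e}_h^L,\bm{e}_h^u,\bm{e}_h^{\hat{u}},\bm{e}_h^{\tilde{u}})\interleave\lesssim h(\lVert\nu^{\frac{1}{2}}\bm{u}\rVert_{2,\Omega_1\cup\Omega_2}+\lVert\nu^{-\frac{1}{2}}p\rVert_{1,\Omega_1\cup\Omega_2})$ by Lemma~\ref{estimateofenergynorm}. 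Adding up all contributions then yields the two displayed inequalities.

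Because all the genuine analysis is already contained in Lemmas~\ref{ineq}, \ref{estimateofenergynorm} and~\ref{est_p}, I do not expect any real obstacle here: the argument is a routine assembly via the triangle inequality. The only points that need a little care are the coefficient bookkeeping steps — absorbing $\lVert\nu^{-\frac{1}{2}}\bm{L}\rVert_{1,\Omega_1\cup\Omega_2}$ into $\lVert\nu^{\frac{1}{2}}\bm{u}\rVert_{2,\Omega_1\cup\Omega_2}$ via the constitutive relation $\bm{L}=\nu\nabla\bm{u}$, and selecting the normalizing weight $(\nu_{max}^{\frac{1}{2}}+\alpha_{max}^{\frac{1}{2}})^{-1}$ so that the pressure projection error $h\lVert p\rVert_{1,\Omega_1\cup\Omega_2}$ is dominated by $h\lVert\nu^{-\frac{1}{2}}p\rVert_{1,\Omega_1\cup\Omega_2}$ independently of the coefficients and of the interface location.
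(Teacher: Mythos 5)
Your proposal is correct and is exactly the argument the paper intends: the paper proves Theorem \ref{1norm} by the same splitting into projection errors (Lemma \ref{ineq}) and discrete errors (Lemmas \ref{estimateofenergynorm} and \ref{est_p}) combined with the triangle inequality, only stating this in one line, while you supply the coefficient bookkeeping (e.g.\ $\nu^{-\frac12}\bm{L}=\nu^{\frac12}\nabla\bm{u}$ and $(\nu_{max}^{\frac12}+\alpha_{max}^{\frac12})^{-1}\lVert p\rVert_1\lesssim\lVert\nu^{-\frac12}p\rVert_1$) explicitly and correctly.
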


\subsection{$L^2$ error estimation for velocity}
In this subsection, we shall derive an $L^2$ error estimate for the  velocity approximation by    the Aubin-Nitsche's technique of duality argument. To this end, we introduce the auxiliary problem
\begin{subequations}\label{dual}
	\begin{align}
	\bm{\Phi} = \nu\nabla \bm{ \phi} &\quad   \text{in} \quad\Omega_1\cup\Omega_2,\label{dual1}\\
	- \nabla\cdot\bm{\Phi}+\nabla \psi  + \alpha \bm{\phi}= \bm{e}_h^{u} & \quad  \text{in} \quad\Omega_1\cup\Omega_2,\label{dual2}\\
	\nabla\cdot\bm{\phi} = 0 & \quad  \text{in} \quad\Omega_1\cup\Omega_2,\label{dual3}\\
	\bm{\phi}=\bm{0}& \quad \text{on} \quad \partial\Omega,\label{dual4}\\
	\llbracket\bm{\phi}\rrbracket= \bm{0},\ \llbracket(\bm{\Phi}-\psi\bm{I})\bm{n}\rrbracket = \bm{0}&\quad \text{on}\quad \Gamma\label{dual5} ,
	\end{align}
\end{subequations}
and assume the following regularity estimate: 
\begin{align}\label{regularestimate}
\lVert \bm{\Phi}\rVert_{1,\Omega_1\cup\Omega_2} +\lVert \nu\bm{\phi}\rVert_{2,\Omega_1\cup\Omega_2} +\lVert \alpha\bm{\phi}\rVert_{2,\Omega_1\cup\Omega_2} +\lVert \psi\rVert_{1,\Omega_1\cup\Omega_2} \lesssim \lVert \bm{e}_h^{u}\rVert_{0,\mathcal{T}_h}.
\end{align}
Here we recall that $\bm{e}_h^u=\bm{u}_h - \bm{Q}_1\bm{u}$.
\begin{thm}\label{velocity0norm}
Let $(\bm{L},\bm{u},p)\in {H^{1}}(\Omega_1\cup \Omega_2)^{d\times d}\times {H^{2}}(\Omega_1\cup \Omega_2)^d\times 
H^1(\Omega_1\cup \Omega_2) $ and $(\bm{L}_h,\bm{u}_h,p_h,\bm{\hat{u}}_h,\bm{\tilde{u}}_h)\in \bm{W}_h\times 
V_h\times Q_h\times M_h(g)\times \tilde{M}_h$ be the solutions of the problem \eqref{firstorderscheme} and the X-HDG scheme 
\eqref{xhdgscheme}, respectively.  Then, under the assumption \eqref{regularestimate}  it holds 
\begin{align}\label{est_L2}
\lVert \bm{u}-\bm{u}_h\rVert_{0,\mathcal{T}_h} \lesssim h^2\nu_{min}^{-\frac{1}{2}}\left(\lVert \nu^{\frac{1}{2}}\bm{u}\rVert_{2,\Omega_{1}\cup\Omega_{2}}+\lVert \nu^{-\frac{1}{2}}p\rVert_{1,\Omega_{1}\cup\Omega_{2}}\right )
\end{align}
for  (i) $m = 1$ or (ii) $m = 0$ and 
$\bm{g}_N^{\Gamma}  $ is a constant on any $F \in  \mathcal{\varepsilon}_h^{\Gamma} $. Here $\nu_{min}: = \min\limits_{i=1,2}{\nu_i}$.
\end{thm}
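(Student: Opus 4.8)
The plan is to use the Aubin--Nitsche duality argument with the dual problem \eqref{dual}, whose first-order form reads $\bm{\Phi}=\nu\nabla\bm{\phi}$, $-\nabla\cdot\bm{\Phi}+\nabla\psi+\alpha\bm{\phi}=\bm{e}_h^u$, $\nabla\cdot\bm{\phi}=0$, with homogeneous boundary and interface conditions. Testing the second equation with $\bm{e}_h^u$ gives $\lVert\bm{e}_h^u\rVert_{0,\mathcal{T}_h}^2=(\bm{e}_h^u,-\nabla\cdot\bm{\Phi}+\nabla\psi+\alpha\bm{\phi})_{\mathcal{T}_h}$. The first step is to integrate by parts element-by-element and insert numerical traces so that this expression is rewritten in terms of the X-HDG bilinear form applied to $(\bm{Q}_0\bm{\Phi},\bm{Q}_1\bm{\phi},Q_0\psi,\bm{Q}_0^b\bm{\phi},\bm{Q}_m^b\bm{\phi})$ tested against the error functions $(\bm{e}_h^L,\bm{e}_h^u,e_h^p,\bm{e}_h^{\hat u},\bm{e}_h^{\tilde u})$; here I would exploit that $\bm{\phi}$ is continuous across $\Gamma$ and $\partial\Omega$ (so $\llbracket\bm{\phi}\rrbracket=0$) and that the dual interface flux jump vanishes. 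This is essentially the adjoint-consistency computation, and it converts $\lVert\bm{e}_h^u\rVert_{0,\mathcal{T}_h}^2$ into a sum of terms each of which pairs a projection error of the dual variables with either an error function or a projection error of the primal variables.

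Next I would reorganize the resulting terms using the \emph{primal} error equations \eqref{errorequation}. Taking $(\bm{w},\bm{v},q,\bm{\mu},\bm{\tilde\mu})=(\bm{Q}_0\bm{\Phi},\bm{Q}_1\bm{\phi},Q_0\psi,\bm{Q}_0^b\bm{\phi},\bm{Q}_m^b\bm{\phi})$ in \eqref{errorequation} and subtracting from the dual identity, the leading (consistent) terms cancel and what remains is a genuine consistency error: products of projection errors of $\bm{\Phi},\bm{\phi},\psi$ against projection errors of $\bm{L},\bm{u},p$ and the stabilization-driven jump quantities $\bm{Q}_0^b\bm{e}_h^u-\bm{e}_h^{\hat u}$ and $\bm{Q}_m^b\bm{e}_h^u-\bm{e}_h^{\tilde u}$. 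Each such product is bounded by Cauchy--Schwarz, the trace inequality, and Lemma \ref{ineq}, picking up one power of $h$ from each of the two factors: the dual factor contributes $h\,(\lVert\bm{\Phi}\rVert_{1}+\lVert\nu\bm{\phi}\rVert_2+\lVert\alpha\bm{\phi}\rVert_2+\lVert\psi\rVert_1)$, which by the regularity bound \eqref{regularestimate} is $\lesssim h\lVert\bm{e}_h^u\rVert_{0,\mathcal{T}_h}$, and the primal factor contributes $h\,(\lVert\nu^{1/2}\bm{u}\rVert_{2}+\lVert\nu^{-1/2}p\rVert_{1})$ by Lemmas \ref{estimateofenergynorm} and \ref{est_p} together with Lemma \ref{ineq} (absorbing $\nu_{\min}^{-1/2}$ from the stabilization $\tau=\nu_i h_K^{-1}$ where the scaling does not match). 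Dividing through by $\lVert\bm{e}_h^u\rVert_{0,\mathcal{T}_h}$ gives $\lVert\bm{e}_h^u\rVert_{0,\mathcal{T}_h}\lesssim h^2\nu_{\min}^{-1/2}(\lVert\nu^{1/2}\bm{u}\rVert_{2}+\lVert\nu^{-1/2}p\rVert_{1})$, and the triangle inequality with $\lVert\bm{u}-\bm{Q}_1\bm{u}\rVert_{0,\mathcal{T}_h}\lesssim h^2\lVert\bm{u}\rVert_{2}$ finishes \eqref{est_L2}.

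The main obstacle, and the reason for the dichotomy ``$m=1$, or $m=0$ with $\bm{g}_N^\Gamma$ constant on each $F\in\varepsilon_h^\Gamma$,'' is controlling the interface terms on $\varepsilon_h^\Gamma$ where the traces $\tilde{\bm{u}}_h$ live only in $P_m(F)^d$. When a term on $\varepsilon_h^\Gamma$ involves $\bm{g}_N^{\Gamma_h}$ (through $L_2$ and the right-hand side of \eqref{errorequation5}) paired against $\bm{Q}_1\bm{\phi}-\bm{Q}_m^b\bm{\phi}$, a naive bound loses a power of $h$ unless either the dual trace projection $\bm{Q}_m^b$ is rich enough ($m=1$) to annihilate the linear part of $\bm{Q}_1\bm{\phi}$ against a general $\bm{g}_N^\Gamma$, or $\bm{g}_N^\Gamma$ is piecewise constant so that $\langle\bm{g}_N^\Gamma,\bm{Q}_1\bm{\phi}-\bm{Q}_0^b\bm{\phi}\rangle_{*,\varepsilon_h^\Gamma}=0$ by the defining property of the $L^2$-projection $\bm{Q}_0^b$. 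I would isolate this single pairing and treat the two cases $m=1$ and ($m=0$, $\bm{g}_N^\Gamma$ constant) separately at exactly that point, showing the troublesome term is either zero or $O(h^2)$; all other interface contributions are handled uniformly by the same trace/projection estimates as in the bulk. A secondary technical point is keeping track of the coefficient weights so that only $\nu_{\min}^{-1/2}$ (not $\nu_{\max}$ ratios) appears in the final constant, which follows because $\tau^{1/2}$ carries $\nu_i^{1/2}h_K^{-1/2}$ and the dual projection errors on faces scale like $h_K^{1/2}\lVert\nu\bm{\phi}\rVert_2$, leaving a clean $\nu_i^{1/2}\cdot\nu_i^{-1}=\nu_i^{-1/2}\le\nu_{\min}^{-1/2}$ after using \eqref{regularestimate}.
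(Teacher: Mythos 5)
Your outline follows exactly the paper's route: Aubin--Nitsche duality with the dual problem \eqref{dual}, rewriting $\lVert\bm{e}_h^u\rVert_{0,\mathcal{T}_h}^2$ via the error equations \eqref{errorequation} tested with $(\bm{Q}_0\bm{\Phi},\bm{Q}_1\bm{\phi},Q_0\psi,\bm{Q}_0^b\bm{\phi},\bm{Q}_m^b\bm{\phi})$, generic projection/trace bounds plus \eqref{regularestimate} for the bulk terms, and a special argument at the interface, finishing with the triangle inequality. However, the two places where the proof is actually decided are stated in a way that would not go through. First, your mechanism for case (ii) rests on the identity $\langle\bm{g}_N^{\Gamma},\bm{Q}_1\bm{\phi}-\bm{Q}_0^b\bm{\phi}\rangle_{*,\varepsilon_h^{\Gamma}}=0$, which is false in general: $\bm{Q}_1\bm{\phi}$ is the elementwise \emph{volume} projection, its trace on $F\in\varepsilon_h^{\Gamma}$ is two-valued and is not the face average, so orthogonality of $\bm{Q}_0^b$ does not apply to it. Moreover $\bm{g}_N^{\Gamma}$ does not appear explicitly in $L_2$ or in \eqref{errorequation5} (it cancels when the error equation is formed); it only re-emerges through $\llbracket(\bm{L}-p\bm{I})\bm{n}\rrbracket=\bm{g}_N^{\Gamma}$ once the two one-sided terms are summed against a \emph{single-valued} test function. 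The paper's repair is the finer splitting $\bm{Q}_1\bm{\phi}-\bm{Q}_m^b\bm{\phi}=(\bm{Q}_1\bm{\phi}-\bm{\phi})+(\bm{\phi}-\bm{Q}_m^b\bm{\phi})$: the first (two-valued) piece stays paired with $(\bm{Q}_0\bm{L}-\bm{L})\bm{n}$ and $(Q_0p-p)\bm{n}$ and is $O(h^{1/2})\cdot O(h^{3/2})=O(h^2)$; the second piece is single-valued, so summing the two sides produces $\langle\bm{g}_N^{\Gamma},\bm{\phi}-\bm{Q}_0^b\bm{\phi}\rangle_F$ plus pairings with the facewise-constant traces $\bm{Q}_0\bm{L}\bm{n}$, $Q_0p\,\bm{n}$, all of which vanish by the orthogonality of $\bm{Q}_0^b$ when $\bm{g}_N^{\Gamma}$ is constant per face. (For $m=1$ no ``annihilation'' is available or needed; one simply uses $\lVert\bm{\phi}-\bm{Q}_1^b\bm{\phi}\rVert_{0,F}\lesssim h^{3/2}\lVert\bm{\phi}\rVert_{2}$.)

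Second, your claim that ``all other interface contributions are handled uniformly by the same trace/projection estimates as in the bulk'' misses that the analogous contributions on $\partial\mathcal{T}_h\setminus\varepsilon_h^{\Gamma}$, namely $\langle(\bm{Q}_0\bm{L}-\bm{L})\bm{n}+(Q_0p-p)\bm{n},\bm{\phi}-\bm{Q}_0^b\bm{\phi}\rangle_{\partial\mathcal{T}_h\setminus\varepsilon_h^{\Gamma}}$, are only $O(h)$ by generic estimates, since $\bm{Q}_0^b$ is merely first-order accurate on faces. They must be shown to vanish exactly (the paper's $\tilde I_2=0$), using the continuity of $(\bm{L}-p\bm{I})\bm{n}$ across interior faces off $\Gamma$, $\bm{\phi}|_{\partial\Omega}=\bm{0}$, the single-valuedness of $\bm{\phi}-\bm{Q}_0^b\bm{\phi}$, and again the $\bm{Q}_0^b$-orthogonality against the constant traces $\bm{Q}_0\bm{L}\bm{n}$, $Q_0p\,\bm{n}$. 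With these two exact-cancellation arguments inserted, your plan coincides with the paper's proof; as written, both the $m=0$ crux and the interior-face terms would lose a power of $h$ (or drag an extraneous dependence on $\bm{g}_N^{\Gamma}$ into the estimate).
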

\begin{proof}
Testing the equations \eqref{dual2} and \eqref{dual3} by $\bm{e}_h^u$ and $e_h^p$, respectively,   adding them up,  and using  integration by parts and the properties of projections, we obtain 
\begin{align*}
\lVert \bm{e}_h^u\rVert_{0,\mathcal{T}_h}^2 
=& -(\nabla_h\cdot \bm{\Phi},\bm{e}_h^u)_{\mathcal{T}_h}+(\nabla_h\psi,\bm{e}_h^u)_{\mathcal{T}_h}+
(\alpha\bm{\phi},\bm{e}_h^u)_{\mathcal{T}_h}-(\nabla_h\cdot \bm{\phi},e_h^p)_{\mathcal{T}_h} \\
=&\langle (\bm{Q}_0\bm{\Phi}-\bm{\Phi})\bm{n},\bm{e}_h^u\rangle_{\partial \mathcal{T}_h\setminus 
\mathcal{\varepsilon}_h^{\Gamma}} +\langle (\bm{Q}_0\bm{\Phi}-\bm{\Phi})\bm{n},\bm{e}_h^u\rangle_{*,\mathcal{\varepsilon}_h^{\Gamma}} +\langle \psi -Q_0\psi,\bm{e}_h^u\cdot\bm{n}\rangle_{\partial \mathcal{T}_h\setminus \mathcal{\varepsilon}_h^{\Gamma}}\\
&+\langle \psi 
-Q_0\psi,\bm{e}_h^u\cdot\bm{n}\rangle_{*,\mathcal{\varepsilon}_h^{\Gamma}} +(\alpha\bm{Q}_1\bm{\phi},\bm{e}_h^u)_{\mathcal{T}_h} -\langle e_h^p\bm{n},\bm{Q}_0^b\bm{\phi}\rangle_{\partial \mathcal{T}_h\setminus \mathcal{\varepsilon}_h^{\Gamma}} -\langle e_h^p\bm{n},\bm{Q}_m^b\bm{\phi}
\rangle_{*,\mathcal{\varepsilon}_h^{\Gamma}} .
\end{align*} 
Due to the  error equations 
\eqref{errorequation1} and \eqref{errorequation3}, we have 
\begin{align*}
	(\nu^{-1}\bm{e}_h^L,\bm{Q}_0\bm{\Phi})_{\mathcal{T}_h} 
	-\langle \bm{e}_h^{\hat{u}},\bm{Q}_0\bm{\Phi}\bm{n} \rangle_{\partial\mathcal{T}_h\setminus \mathcal{\varepsilon}_h^{\Gamma}} -\langle \bm{e}_h^{\tilde{u}},\bm{Q}_0\bm{\Phi}\bm{n} \rangle_{*,\mathcal{\varepsilon}_h^{\Gamma}} &= 0 , \\
	\langle\bm{e}_h^{\hat{u}}\cdot\bm{n},Q_0\psi\rangle_{\partial\mathcal{T}_h\setminus \mathcal{\varepsilon}_h^{\Gamma}} + \langle\bm{e}_h^{\tilde{u}}\cdot\bm{n},Q_0\psi\rangle_{*,\mathcal{\varepsilon}_h^{\Gamma}}&= 0,
\end{align*}
which, together  with  the fact   $ \llbracket(\bm{\Phi}-\psi\bm{I})\bm{n}\rrbracket_\Gamma= \bm{0}$,  imply
\begin{align*}
\lVert \bm{e}_h^u\rVert_{0,\mathcal{T}_h}^2 
=&\langle (\bm{Q}_0\bm{\Phi}-\bm{\Phi})\bm{n},\bm{e}_h^u-\bm{e}_h^{\hat{u}}\rangle_{\partial \mathcal{T}
_h\setminus \mathcal{\varepsilon}_h^{\Gamma}} +\langle (\bm{Q}_0\bm{\Phi}-\bm{\Phi})\bm{n},\bm{e}_h^u-\bm{e}_h^{\tilde{u}}
\rangle_{*,\mathcal{\varepsilon}_h^{\Gamma}} \\
&+\langle \psi -Q_0\psi,(\bm{e}_h^u-\bm{e}_h^{\hat{u}})\cdot\bm{n}\rangle_{\partial \mathcal{T}_h\setminus 
\mathcal{\varepsilon}_h^{\Gamma}}+\langle \psi -Q_0\psi,(\bm{e}_h^u-\bm{e}_h^{\tilde{u}})\cdot\bm{n}\rangle_{*,\mathcal{\varepsilon}_h^{\Gamma}} \\
&+
(\alpha \bm{Q}_1\bm{\phi},\bm{e}_h^u)_{\mathcal{T}_h}  - \langle e_h^p\bm{n},\bm{Q}_0^b\bm{\phi}\rangle_{\partial \mathcal{T}_h\setminus \mathcal{\varepsilon}_h^{\Gamma}} - \langle e_h^p\bm{n},
\bm{Q}_m^b\bm{\phi}\rangle_{*,\mathcal{\varepsilon}_h^{\Gamma}} +(\bm{Q}_0\bm{\Phi},\nu^{-1}\bm{e}_h^L)_{\mathcal{T}_h}.
\end{align*} 
Notice that by  \eqref{dual1}, the properties of projections and  integration by parts it holds
\begin{align*}
	(\bm{Q}_0\bm{\Phi},\nu^{-1}\bm{e}_h^L)_{\mathcal{T}_h} =&(\bm{\Phi},\nu^{-1}\bm{e}_h^L)_{\mathcal{T}_h} = (\bm{\phi},\bm{e}_h^L)_{\mathcal{T}_h}\\
	=& \langle \bm{\phi},\bm{e}_h^L\bm{n}
	\rangle_{\partial\mathcal{T}_h\setminus \mathcal{\varepsilon}_h^{\Gamma}} +\langle \bm{\phi},\bm{e}_h^L\bm{n}\rangle_{*,\mathcal{\varepsilon}_h^{\Gamma}} \\
	=&\langle \bm{Q}_0^b\bm{\phi},\bm{e}_h^L\bm{n}
	\rangle_{\partial\mathcal{T}_h\setminus \mathcal{\varepsilon}_h^{\Gamma}} +\langle \bm{Q}_m^b\bm{\phi},\bm{e}_h^L\bm{n}
	\rangle_{*,\mathcal{\varepsilon}_h^{\Gamma}},
\end{align*}
and that taking $(\bm{v},\bm{\mu},\bm{\tilde{\mu}})=(\bm{Q}_1\bm{\phi},\bm{Q}_0^b\bm{\phi},\bm{Q}_m^b\bm{\phi})$ in 
\eqref{errorequation2},\eqref{errorequation4}-\eqref{errorequation5} shows
\begin{align*}
(\alpha\bm{e}_h^u,\bm{Q}_1\phi)_{\mathcal{T}_h} =- \langle \tau(\bm{Q}_0^b\bm{e}_h^u-\bm{e}_h^{\hat{u}}),\bm{Q}
_1\bm{\phi}\rangle_{\partial\mathcal{T}_h\setminus \mathcal{\varepsilon}_h^{\Gamma}} 
- \langle \tau(\bm{Q}_m^b\bm{e}_h^u-\bm{e}_h^{\tilde{u}}),\bm{Q}_1\bm{\phi}\rangle_{*,\mathcal{\varepsilon}_h^{\Gamma}} &+ \sum_{i=1}
^{2}L_i(\bm{Q}_1\bm{\phi}) ,\\
\langle \bm{e}_h^L\bm{n},\bm{Q}_0^b\bm{\phi}\rangle_{\partial \mathcal{T}_h\setminus \mathcal{\varepsilon}_h^{\Gamma}}-\langle 
e_h^p\bm{n},\bm{Q}_0^b\bm{\phi}\rangle_{\partial \mathcal{T}_h\setminus \mathcal{\varepsilon}_h^{\Gamma}}=\langle \tau(\bm{Q}_0^b\bm{e}
_h^u-\bm{e}_h^{\hat{u}}),\bm{Q}_0^b\bm{\phi}\rangle_{\partial \mathcal{T}_h\setminus \mathcal{\varepsilon}_h^{\Gamma}}& -L_1(\bm{Q}
_0^b\bm{\phi}),\\
\langle \bm{e}_h^L\bm{n},\bm{Q}_m^b\bm{\phi}\rangle_{*,\mathcal{\varepsilon}_h^{\Gamma}}-\langle e_h^p\bm{n},\bm{Q}_m^b\bm{\phi}
\rangle_{*,\mathcal{\varepsilon}_h^{\Gamma}}=\langle \tau(\bm{Q}_m^b\bm{e}_h^u-\bm{e}_h^{\tilde{u}}),\bm{Q}_m^b\bm{\phi}\rangle_{*,\mathcal{\varepsilon}_h^{\Gamma}}
& -L_2(\bm{Q}_m^b\bm{\phi}).
\end{align*}
The four equations above mean that 
 \begin{align*}
 &(\alpha \bm{Q}_1\bm{\phi},\bm{e}_h^u)_{\mathcal{T}_h}  - \langle e_h^p\bm{n},\bm{Q}_0^b\bm{\phi}\rangle_{\partial \mathcal{T}_h\setminus \mathcal{\varepsilon}_h^{\Gamma}} - \langle e_h^p\bm{n},
\bm{Q}_m^b\bm{\phi}\rangle_{*,\mathcal{\varepsilon}_h^{\Gamma}} +(\bm{Q}_0\bm{\Phi},\nu^{-1}\bm{e}_h^L)_{\mathcal{T}_h}\\
=&\langle \tau(\bm{Q}_0^b\bm{e}_h^u-\bm{e}_h^{\hat{u}}),\bm{Q}_0^b\bm{\phi}-\bm{Q}_1\bm{\phi}
\rangle_{\partial\mathcal{T}_h\setminus \mathcal{\varepsilon}_h^{\Gamma}} + \langle \tau(\bm{Q}_m^b\bm{e}_h^u-\bm{e}_h^{\tilde{u}}),\bm{Q}
_m^b\bm{\phi}-\bm{Q}_1\bm{\phi}\rangle_{*,\mathcal{\varepsilon}_h^{\Gamma}} \\
&\ \ +L_1(\bm{Q}_1\bm{\phi}-\bm{Q}_0^b\bm{\phi})+L_2(\bm{Q}_1\bm{\phi}-\bm{Q}_m^b\bm{\phi}).
\end{align*} 
As a result, we obtain
%
\begin{align}\label{411}
\lVert \bm{e}_h^u\rVert_{0,\mathcal{T}_h}^2 = \sum_{j=1}^{4}I_j
\end{align}
with
\begin{align*}
I_1: =& \langle (\bm{Q}_0\bm{\Phi}-\bm{\Phi})\bm{n},\bm{e}_h^u-\bm{e}_h^{\hat{u}}\rangle_{\partial \mathcal{T}
_h\setminus \mathcal{\varepsilon}_h^{\Gamma}} +\langle (\bm{Q}_0\bm{\Phi}-\bm{\Phi})\bm{n},\bm{e}_h^u-\bm{e}_h^{\tilde{u}}
\rangle_{*,\mathcal{\varepsilon}_h^{\Gamma}}, \\
I_2: = &\langle \psi -Q_0\psi,(\bm{e}_h^u-\bm{e}_h^{\hat{u}})\cdot\bm{n}\rangle_{\partial \mathcal{T}_h\setminus 
\mathcal{\varepsilon}_h^{\Gamma}}+\langle \psi -Q_0\psi,(\bm{e}_h^u-\bm{e}_h^{\tilde{u}})\cdot\bm{n}\rangle_{*,\mathcal{\varepsilon}_h^{\Gamma}},\\
I_3: = &\langle \tau(\bm{Q}_0^b\bm{e}_h^u-\bm{e}_h^{\hat{u}}),\bm{Q}_0^b\bm{\phi}-\bm{Q}_1\bm{\phi}
\rangle_{\partial\mathcal{T}_h\setminus \mathcal{\varepsilon}_h^{\Gamma}} + \langle \tau(\bm{Q}_m^b\bm{e}_h^u-\bm{e}_h^{\tilde{u}}),\bm{Q}
_m^b\bm{\phi}-\bm{Q}_1\bm{\phi}\rangle_{*,\mathcal{\varepsilon}_h^{\Gamma}} , \\
I_4: =&L_1(\bm{Q}_1\bm{\phi}-\bm{Q}_0^b\bm{\phi})+L_2(\bm{Q}_1\bm{\phi}-\bm{Q}_m^b\bm{\phi}) .
\end{align*}
From the Cauchy-Schwarz inequality, Lemmas \ref{ineq} and \ref{estimateofenergynorm}, and the regularity assumption
\eqref{regularestimate}  it follows
\begin{align}\label{412}
I_1 +I_2+I_3
\lesssim & \ h\nu_{min}^{-\frac{1}{2}}\left(\lVert \bm{\Phi}\rVert_{1,\Omega_{1}\cup\Omega_{2}}+\lVert \nu\bm{\phi}\rVert_{2,\Omega_{1}\cup\Omega_{2}} +\lVert \psi\rVert_{1,\Omega_{1}\cup\Omega_{2}} \right) \interleave (\bm{e}_h^L,\bm{e}
_h^u,\bm{e}_h^{\hat{u}},\bm{e}_h^{\tilde{u}})\interleave\nonumber \\
\lesssim &\  h^2 \nu_{min}^{-\frac{1}{2}} \lVert \bm{e}_h^u\rVert_{0,\mathcal{T}_h}\left(\lVert \nu^{\frac{1}{2}}\bm{u}\rVert_{2,\Omega_{1}\cup\Omega_{2}}+\lVert \nu^{-\frac{1}{2}}p\rVert_{1,\Omega_{1}\cup\Omega_{2}}\right ).
\end{align}

In light of the property of projection, it holds
\begin{align}\label{413}
I_4= &\  \left(\langle (\bm{Q}_0\bm{L}-\bm{L})\bm{n},\bm{Q}_1\bm{\phi}-\bm{\phi}\rangle_{\partial \mathcal{T}
_h\setminus \mathcal{\varepsilon}_h^{\Gamma}}+\langle \tau (\bm{Q}_0^b(\bm{u}-\bm{Q}_1\bm{u})),(\bm{Q}_1\bm{\phi}-\bm{\phi})+(\bm{\phi}-\bm{Q}_0^b\bm{\phi} )
\rangle_{\partial \mathcal{T}_h\setminus \mathcal{\varepsilon}_h^{\Gamma}} \right.\nonumber \\
& \ +\langle (Q_0p-p)\bm{n},\bm{Q}_1\bm{\phi}-\bm{\phi} 
\rangle_{\partial \mathcal{T}_h\setminus \mathcal{\varepsilon}_h^{\Gamma}}  + \langle (\bm{Q}_0\bm{L}-\bm{L})\bm{n},\bm{Q}_1\bm{\phi}-\bm{\phi}\rangle_{*,\mathcal{\varepsilon}_h^{\Gamma}}\nonumber \\
&\ \left.+  \langle \tau (\bm{Q}_m^b(\bm{u}-\bm{Q}_1\bm{u})),(\bm{Q}_1\bm{\phi}-\bm{\phi})+(\bm{\phi}-\bm{Q}_m^b\bm{\phi}) \rangle_{*,\mathcal{\varepsilon}_h^{\Gamma}}+\langle (Q_0p-p)
\bm{n},\bm{Q}_1\bm{\phi}-\bm{\phi}\rangle_{*,\mathcal{\varepsilon}_h^{\Gamma}} \right)\nonumber\\
&\ +\left(\langle (\bm{Q}_0\bm{L}-\bm{L})\bm{n},\bm{\phi}-\bm{Q}_0^b\bm{\phi}\rangle_{\partial \mathcal{T}
_h\setminus \mathcal{\varepsilon}_h^{\Gamma}}+\langle (Q_0p-p)\bm{n},\bm{\phi}-\bm{Q}_0^b\bm{\phi} \rangle_{\partial \mathcal{T}
_h\setminus \mathcal{\varepsilon}_h^{\Gamma}}\right) \nonumber\\
&\ +\left( \langle (\bm{Q}_0\bm{L}-\bm{L})\bm{n},\bm{\phi}-\bm{Q}_m^b\bm{\phi}\rangle_{*,\mathcal{\varepsilon}_h^{\Gamma}} +\langle (Q_0p-p)\bm{n}, \bm{\phi}-\bm{Q}_m^b\bm{\phi}\rangle_{*,\mathcal{\varepsilon}_h^{\Gamma}} \right)\nonumber\\
=:&\  \tilde{I_1} + \tilde{I_2} +\tilde{I_3} . 
\end{align}
Again by Lemma \ref{ineq} and \eqref{regularestimate} we get
\begin{align}
\tilde{I_1}
%
\lesssim &\ h^{2}\lVert \nu\bm{\phi}\rVert_{2,\Omega_{1}\cup\Omega_{2}}
(\lVert \nu^{-1}\bm{L}\rVert_{1,\Omega_{1}\cup\Omega_{2}} +\lVert \bm{u}
\rVert_{2,\Omega_{1}\cup\Omega_{2}} + \lVert\nu^{-1} p\rVert_{1,\Omega_{1}\cup\Omega_{2}}) \nonumber \\
\lesssim&\  h^2 \nu_{min}^{-\frac{1}{2}} \lVert \bm{e}_h^u\rVert_{0,\mathcal{T}_h}\left(\lVert \nu^{\frac{1}{2}}\bm{u}\rVert_{2,\Omega_{1}\cup\Omega_{2}}+\lVert \nu^{-\frac{1}{2}}p\rVert_{1,\Omega_{1}\cup\Omega_{2}}\right ) ,
\end{align}
and, for case (i) with $m = 1$,  
\begin{align}\label{416}
\tilde{I_3} =&\  \langle (\bm{Q}_0\bm{L}-\bm{L})\bm{n},\bm{\phi}-\bm{Q}_1^b\bm{\phi}\rangle_{*,\mathcal{\varepsilon}_h^{\Gamma}} +\langle (Q_0p-p)\bm{n}, \bm{\phi}-\bm{Q}_1^b\bm{\phi}\rangle_{*,\mathcal{\varepsilon}_h^{\Gamma}}\nonumber\\
\lesssim&\  h^2\lVert \nu\bm{\phi}\rVert_{2,\Omega_{1}\cup\Omega_{2}}(\lVert \nu^{-1}\bm{L}
\rVert_{1,\Omega_{1}\cup\Omega_{2}} + \lVert\nu^{-1} p\rVert_{1,\Omega_{1}\cup\Omega_{2}}) \nonumber \\
\lesssim&\  h^2 \nu_{min}^{-\frac{1}{2}} \lVert \bm{e}_h^u\rVert_{0,\mathcal{T}_h}\left(\lVert \nu^{\frac{1}{2}}\bm{u}\rVert_{2,\Omega_{1}\cup\Omega_{2}}+\lVert \nu^{-\frac{1}{2}}p\rVert_{1,\Omega_{1}\cup\Omega_{2}}\right ).
\end{align}
Since  $\llbracket (\bm{L}-p\bm{I})\bm{n} \rrbracket _F =0 $  for $F \in \mathcal{\varepsilon}_h\setminus\mathcal{\varepsilon}_h^{\Gamma} $ with $F\nsubseteq \partial\Omega$, and $
\bm{\phi} |_{\partial\Omega}= \bm{0}$,  we have
\begin{align}
\tilde{I_2} 
=&\  \langle (\bm{Q}_0\bm{L}-\bm{L})\bm{n},\bm{\phi}-\bm{Q}_0^b\bm{\phi}\rangle_{\partial \mathcal{T}
_h\setminus \mathcal{\varepsilon}_h^{\Gamma}}+\langle (Q_0p-p)\bm{n},\bm{\phi}-\bm{Q}_0^b\bm{\phi} \rangle_{\partial \mathcal{T}
_h\setminus \mathcal{\varepsilon}_h^{\Gamma}} \nonumber\\
=&\   \langle (\bm{Q}_0\bm{L},\bm{\phi}-\bm{Q}_0^b\bm{\phi}\rangle_{\partial \mathcal{T}
_h\setminus \mathcal{\varepsilon}_h^{\Gamma}}+\langle (Q_0p,\bm{\phi}-\bm{Q}_0^b\bm{\phi} \rangle_{\partial \mathcal{T}
_h\setminus \mathcal{\varepsilon}_h^{\Gamma}}\nonumber\\
=&\ 0 .
\end{align}
Similarly, for case (ii) with  $m = 0$ and $\llbracket (\bm{L}-p\bm{I})\bm{n} \rrbracket  =\bm{g}_N^{\Gamma} =const.$ on any $F \in  \mathcal{\varepsilon}_h^{\Gamma} $,  it holds
\begin{align}\label{416}
\tilde{I_3} 
=& \langle (\bm{Q}_0\bm{L}-\bm{L})\bm{n},\bm{\phi}-\bm{Q}_0^b\bm{\phi}\rangle_{*,\mathcal{\varepsilon}_h^{\Gamma}} +\langle (Q_0p-p)\bm{n}, \bm{\phi}-\bm{Q}_0^b\bm{\phi}\rangle_{*,\mathcal{\varepsilon}_h^{\Gamma}} = 0.
\end{align}

Finally, combining \eqref{411}-\eqref{416}  completes the proof.
\end{proof}
\begin{rem}
	For the Stokes equation with $\nu_1 = \nu_2 = \nu$ and $\alpha=0$, from Theorems \ref{1norm} and \ref{velocity0norm} we easily obtain 
%
	\begin{align*}
		\lVert \bm{u}-\bm{u}_h\rVert_{0,\mathcal{T}_h}+h\lVert \nabla \bm{u}- \nabla_h\bm{u}_h\rVert_{0,\mathcal{T}_h} +\nu^{-1}h\lVert p-p_h\rVert_{0,\mathcal{T}_h}&\lesssim 
		h^2(\lVert  \bm{u}\rVert_{2,\Omega_{1}\cup\Omega_{2}} + \nu^{-1}\lVert  p\rVert_{1,\Omega_{1}\cup\Omega_{2}}) .
	\end{align*}
\end{rem}

\section{Application of X-HDG method to curved domains}
Let  $\Omega\subset \mathbb{R}^d (d = 2, 3) $  be  a curved domain with piecewise smooth boundary.  Consider the     following   problem: 
\begin{align}
\label{pb2}
\left \{
\begin{array}{rl}
- \nu\Delta \bm{u}+\nabla p +\alpha\bm{u}= \bm{f}&   \text{in} \quad\Omega,\\ 
\nabla\cdot \bm{u} = 0&   \text{in} \quad\Omega,\\ 
\bm{u}=\bm{g}_D&  \text{on} \quad  \partial\Omega.
\end{array}
\right.
\end{align}
Here  $\nu>0$ and   $\alpha\geq 0$ are two constants,  and $\bm{g}_D$ satisfies the compatibility condition \eqref{13-gD}.

\begin{figure}[htp]	
	\centering
	\includegraphics[height = 4.5 cm,width=5  cm]{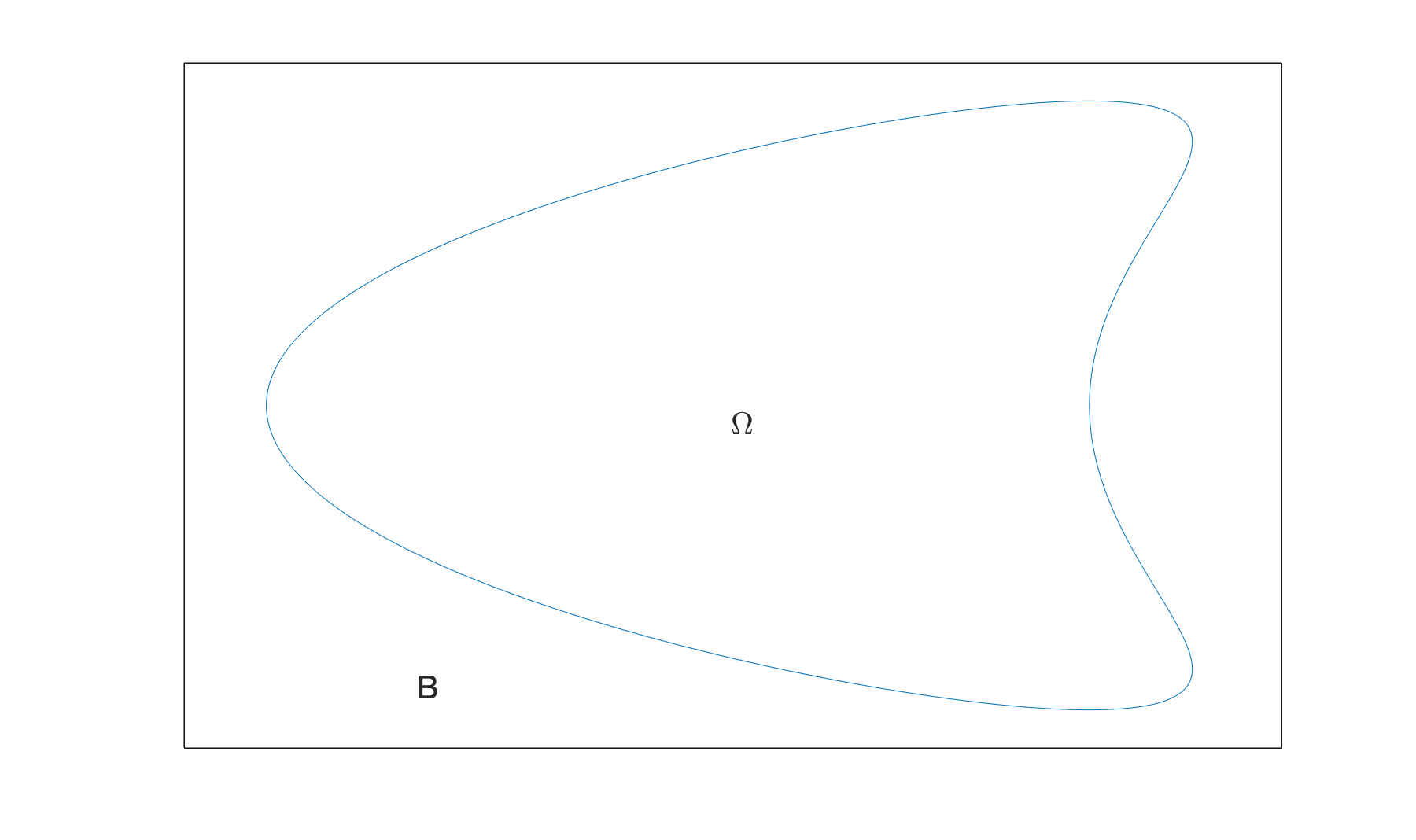} 
	\includegraphics[height = 4.5 cm,width=5  cm]{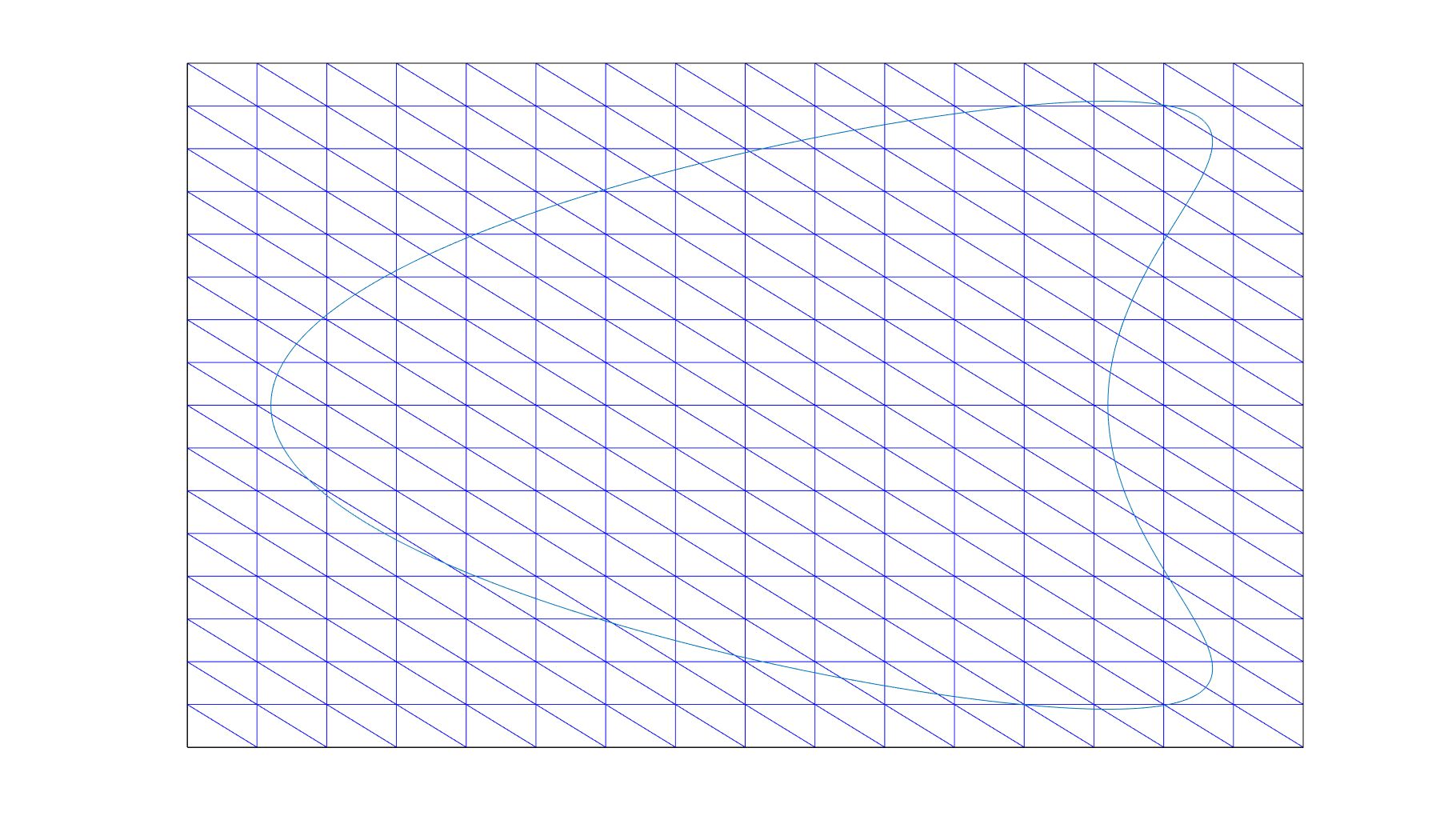} 
	\caption{The geometry of a curved domain (left) and a boundary unfitted mesh (right).}\label{domain1}
\end{figure}

Let $\mathbb{B}\supset \Omega$ be a simpler  domain than $\Omega$ (cf. Figure \ref{domain1}), and denote $\Omega^{c} := \mathbb{B}\backslash \bar{\Omega}$.  Then we can rewrite  problem \eqref{pb2} as an interface problem:  
\begin{subequations}\label{pb3}
	\begin{align} 
	- \nu\Delta \bm{u}+\nabla p + \alpha\bm{u}= \chi_{\Omega}\bm{f} , &\text { in } \Omega\cup\Omega^{c} , \\ 
	\nabla\cdot \bm{u} = 0, &\text { in } \Omega \cup\Omega^{c},\\
	\llbracket \bm{u} \rrbracket=\bm{g}_{D}, &\text { on } \Gamma: =\partial\Omega, \\
	\bm{u} \equiv \bm{0}, \ p  \equiv 0,  & \text { in } \Omega^{c}. \label{43d}
	\end{align}
\end{subequations}
Here $\chi_{\Omega}$ is the characteristic function on $\Omega$, which satisfying $\chi_{\Omega} =1$ in $\Omega$ and $\chi_{\Omega} =0$ in $\Omega^{c}$.
We note that the problem \eqref{pb3} is a special interface problem with $\partial \Omega$ being the interface, for which we  only need to approximate the solution in $\Omega$ due to \eqref{43d}.

Let $\mathcal{T}_h=\cup\{K\}$ be a shape-regular partition of the domain $\Omega$ consisting of arbitrary open polygons/polyhedrons. 
For any $K$ satisfing $K\cap \Gamma \neq \emptyset$, called an boundary element, let  $\Gamma_K := K\cap \Gamma$ be the part of $ \Gamma$ in $K$, and   $\Gamma_{K,h}$ be the straight line/plane segment connecting the intersection between $\Gamma_K$ and $\partial K$.  

Define the following sets of elements or edges/faces:
\begin{align*}
\mathcal{T}_h^{i} :=&\{K\in \mathcal{T}_h: K\cap\Omega = K\}, \\
\mathcal{T}_h^{\Gamma} :=&\{K\cap \Omega:  \forall K\in \mathcal{T}_h \ \text{with}\ K\cap\partial\Omega \neq \emptyset\}, \\
\mathcal{T}_h^{*} :=& \mathcal{T}_h^{i}\cup \mathcal{T}_h^{\Gamma},\\
\mathcal{\varepsilon}_h^{i} :=&\{F\cap \Omega:  \forall  \text{  edge/face $F$ of all elements in}\  \mathcal{T}_h \}, \\
\mathcal{\varepsilon}_h^{\Gamma} :=& \{F:\ F = \Gamma_{K,h}, \forall K\in \mathcal{T}_h^{\Gamma}, \ \text{or} \ F \ \text{is an edge/face of some $K$} \in \mathcal{T}_h^{i} \ \text{with} \ F\subset \bar{K}\cap \partial\Omega\},\\
\mathcal{\varepsilon}_h :=& \mathcal{\varepsilon}_h^{i}\cup \mathcal{\varepsilon}_h^{\Gamma} .
\end{align*} 
We  also introduce  the following X-HDG finite element spaces:
\begin{align*}
\bm{W}_h :=& \{\bm{w}\in L^2(\Omega)^{d\times d}: \bm{w}|_K \in P_0(K)^{d\times d} \  \forall K\in   \mathcal{T}_h^{*}\}, \\
\bm{V}_h :=& \{\bm{v}\in L^2(\Omega)^d: \bm{v}|_K \in  P_1(K)^d \ \forall   K\in   \mathcal{T}_h^{*}\}, \\
Q_h :=& \{q\in L_0^2(\Omega): q|_K \in  P_0(K) \ \forall K\in   \mathcal{T}_h^{*}\}, \\
\bm{M}_h  :=& \{\bm{\mu}\in L^2(\varepsilon_h^i)^d: \ \bm{\mu}|_F \in P_0(F)^d \ 
\forall   F\in \varepsilon_h^i \},\\
\tilde{\bm{M}}_h: =& \{\tilde{\bm{\mu}}\in L^2(\varepsilon_h^{\Gamma})^d : \  \tilde{\bm{\mu}}|_F \in P_m(F)^d,\  \forall F\in \varepsilon_h^{\Gamma} \} \text{ with $m = 0,1$.}
\end{align*}
Then the X-HDG scheme for \eqref{pb3}  is given as follows: find $(\bm{L}_h,\bm{u}_h,p_h,\bm{\hat{u}}_h,\bm{\tilde{u}}_h)\in \bm{W}_h\times \bm{V_h}\times Q_h\times \bm{M}_h\times \tilde{\bm{M}}_h$ such that
\begin{subequations}\label{xhdgscheme_1}
	\begin{align}
	(\nu^{-1}\bm{L}_h,\bm{w})_{\mathcal{T}_h^*} 
	- \langle \hat{\bm{u}}_h,\bm{w}\bm{n}\rangle_{\partial\mathcal{T}_h^*\setminus \varepsilon_h^{\Gamma}} -\langle \tilde{\bm{u}}_h,\bm{w}\bm{n}\rangle_{\mathcal{\varepsilon}_h^{\Gamma}} =& 0, \label{xhdg1_1}\\
	(\alpha\bm{u}_h,\bm{v})_{\mathcal{T}_h^*}+ \langle \tau(\bm{Q}_0^b\bm{u}_h-\bm{\hat{u}}_h),\bm{v}\rangle_{\partial\mathcal{T}_h^*\setminus \varepsilon_h^{\Gamma}} +  \langle \tau(\bm{Q}_m^b\bm{u}_h-\bm{\tilde{u}}_h),\bm{v}\rangle_{\mathcal{\varepsilon}_h^{\Gamma}}  =& (\bm{f},\bm{v}) ,\label{xhdg2_1}\\
\langle\bm{\hat{u}}_h\cdot\bm{n},q\rangle_{\partial\mathcal{T}_h^*\setminus \varepsilon_h^{\Gamma}} + \langle\bm{\tilde{u}}_h\cdot\bm{n},q\rangle_{\mathcal{\varepsilon}_h^{\Gamma}}=& 0,\label{xhdg3_1}\\
	\langle \bm{L}_h \bm{n},\bm{\mu}\rangle_{\partial \mathcal{T}_h^*\setminus \varepsilon_h^{\Gamma}}-\langle p_h\bm{n},\bm{\mu}\rangle_{\partial\mathcal{T}_h^*\setminus \varepsilon_h^{\Gamma}}-\langle \tau(\bm{Q}_0^b\bm{u}_h-\bm{\hat{u}}_h),\bm{\mu}\rangle_{\partial \mathcal{T}_h^*\setminus\varepsilon_h^{\Gamma}} =& 0,\label{xhdg4_1} \\
\langle \bm{\hat{u}}_h, \bm{\mu^*}\rangle_{\mathcal{\varepsilon}_h^{\Gamma}}=&\langle \bm{g}_D^h, \bm{\mu^*}\rangle_{\mathcal{\varepsilon}_h^{\Gamma}} \label{xhdg5_1},
	\end{align}
\end{subequations}
for all $(\bm{w},\bm{v},q,\bm{\mu},\bm{\tilde{\mu}})\in \bm{W}_h\times \bm{V_h}\times Q_h\times \bm{M}_h\times \tilde{\bm{M}}_h$. Here
$$\langle w,v\rangle_{\partial\mathcal{T}_h^*\setminus \varepsilon_h^{\Gamma}} := \sum\limits_{K\in  \mathcal{T}_h^*}\langle w,v\rangle_{\partial K\setminus \varepsilon_h^{\Gamma}}, \quad  \langle w,v\rangle_{ \varepsilon_h^{\Gamma}} := \sum\limits_{F\in  \varepsilon_h^{\Gamma}}\langle w,v\rangle_{F}$$
 for any scalars/vectors $w$ and $ v$,
and the stabilization coefficient is given by
\begin{align}
\tau|_F = \nu h_K^{-1}, \forall F\subset \partial K\  {\rm with } \ K\in \mathcal{T}_h^*.
\end{align}
When   $F\in \mathcal{\varepsilon}_h^{\Gamma}$ is a line segment/straight plane, we take $\bm{g}_D^h|_{F} := \bm{g}_D$, and when  $F=\Gamma_{K,h}\neq \Gamma_K$ for some $K\in \mathcal{T}_h^{\Gamma}$, we set $\bm{g}_D^h|_{F}$ to be some  linear interpolation of   $\bm{g}_D$  using data of $\bm{g}_D$ at two (2D case)/three (3D case) intersection points of  $\Gamma_K$ and $\Gamma_{K,h}$. 

By following the same line as in the proof of Theorem \ref{wellposed}, we can obtain the existence and uniqueness of the solution to \eqref{xhdgscheme_1}.
\begin{thm}\label{wellposed1}
	The X-HDG scheme \eqref{xhdgscheme_1}  admits a unique solution.
\end{thm}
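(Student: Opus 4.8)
The plan is to follow the energy argument of Theorem~\ref{wellposed} almost verbatim, the only new ingredient being that the Dirichlet condition on $\partial\Omega$ is now imposed weakly through \eqref{xhdg5_1} on $\varepsilon_h^{\Gamma}$ rather than through a momentum-type interface equation like \eqref{xhdg6}. Since \eqref{xhdgscheme_1} is a square linear system over a finite-dimensional space, it suffices to show that vanishing data ($\bm{f}=\bm{0}$, $\bm{g}_D^h=\bm{0}$) forces the zero solution. First I would observe that \eqref{xhdg5_1} decouples in this respect: choosing the test function there to be $\tilde{\bm{u}}_h$ itself gives $\langle\tilde{\bm{u}}_h,\tilde{\bm{u}}_h\rangle_{\varepsilon_h^{\Gamma}}=0$, hence $\tilde{\bm{u}}_h=\bm{0}$. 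This leaves \eqref{xhdg1_1}--\eqref{xhdg4_1} as a system in $(\bm{L}_h,\bm{u}_h,p_h,\hat{\bm{u}}_h)$ that has exactly the structure of the ``interior part'' of the scheme in Section~2.

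Next I would take $(\bm{w},\bm{v},q,\bm{\mu})=(\bm{L}_h,\bm{u}_h,p_h,\hat{\bm{u}}_h)$ in \eqref{xhdg1_1}--\eqref{xhdg4_1} and add the four identities. The terms $\langle\bm{L}_h\bm{n},\hat{\bm{u}}_h\rangle$, $\langle p_h\bm{n},\hat{\bm{u}}_h\rangle$ and the mixed stabilization terms cancel in pairs, and since $\bm{Q}_0^b,\bm{Q}_m^b$ are $L^2$ projections onto the (flat) faces and $\tau$ is piecewise constant, the stabilization contributions reduce to squared projection errors; using $\tilde{\bm{u}}_h=\bm{0}$ this yields
\begin{align*}
&(\nu^{-1}\bm{L}_h,\bm{L}_h)_{\mathcal{T}_h^{*}}+(\alpha\bm{u}_h,\bm{u}_h)_{\mathcal{T}_h^{*}}\\
&\qquad{}+\langle\tau(\bm{Q}_0^b\bm{u}_h-\hat{\bm{u}}_h),\bm{Q}_0^b\bm{u}_h-\hat{\bm{u}}_h\rangle_{\partial\mathcal{T}_h^{*}\setminus\varepsilon_h^{\Gamma}}+\langle\tau\bm{Q}_m^b\bm{u}_h,\bm{Q}_m^b\bm{u}_h\rangle_{\varepsilon_h^{\Gamma}}=0,
\end{align*}
so that $\bm{L}_h=\bm{0}$, $\bm{Q}_0^b\bm{u}_h=\hat{\bm{u}}_h$ on $\partial\mathcal{T}_h^{*}\setminus\varepsilon_h^{\Gamma}$, and $\bm{Q}_m^b\bm{u}_h=\bm{0}$ on $\varepsilon_h^{\Gamma}$.

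Then I would revisit \eqref{xhdg1_1} with an arbitrary $\bm{w}\in\bm{W}_h$: integrating by parts (with $\nabla_h\cdot\bm{w}=0$ since $\bm{w}$ is piecewise constant), replacing the element traces of $\bm{u}_h$ by their $L^2$ projections on the flat faces (legitimate because $\bm{w}\bm{n}$ is constant on each face), and inserting $\bm{L}_h=\bm{0}$, $\bm{Q}_0^b\bm{u}_h=\hat{\bm{u}}_h$, $\bm{Q}_m^b\bm{u}_h=\tilde{\bm{u}}_h=\bm{0}$, one obtains $(\nabla_h\bm{u}_h,\bm{w})_{\mathcal{T}_h^{*}}=0$; the choice $\bm{w}=\nabla_h\bm{u}_h\in\bm{W}_h$ forces $\nabla_h\bm{u}_h=\bm{0}$, i.e.\ $\bm{u}_h$ is elementwise constant. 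Now $\bm{Q}_m^b\bm{u}_h=\bm{0}$ on $\varepsilon_h^{\Gamma}$ gives $\bm{u}_h=\bm{0}$ on every element touching $\partial\Omega$, while $\hat{\bm{u}}_h=\bm{Q}_0^b\bm{u}_h$ being single-valued across interior faces forces $\bm{u}_h$ to take the same constant on adjacent elements; connectedness of $\mathcal{T}_h^{*}$ then propagates $\bm{u}_h\equiv\bm{0}$, hence $\hat{\bm{u}}_h\equiv\bm{0}$. Finally \eqref{xhdg4_1} with arbitrary $\bm{\mu}\in\bm{M}_h$ collapses to $\langle\llbracket p_h\rrbracket\bm{n},\bm{\mu}\rangle_{\partial\mathcal{T}_h^{*}\setminus\varepsilon_h^{\Gamma}}=0$, so $p_h$ has no interior jumps, is therefore a global constant, and the constraint $p_h\in L_0^2(\Omega)$ gives $p_h=0$.

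The argument is essentially routine, being a transcription of the proof of Theorem~\ref{wellposed}. The one point needing care — and the step I expect to be the main (minor) obstacle — is the bookkeeping forced by \eqref{xhdg5_1} taking the place of \eqref{xhdg6}: because \eqref{xhdg5_1} contributes none of the $\bm{L}_h\bm{n}$, $p_h\bm{n}$ or stabilization terms on $\varepsilon_h^{\Gamma}$, the energy identity only closes after $\tilde{\bm{u}}_h=\bm{0}$ has first been extracted from it; and one must keep straight which faces carry $\hat{\bm{u}}_h$ versus $\tilde{\bm{u}}_h$ — in particular that the boundary faces of uncut elements lying on $\partial\Omega$ belong to $\varepsilon_h^{\Gamma}$ — when propagating the vanishing of $\bm{u}_h$ and $p_h$ across the mesh.
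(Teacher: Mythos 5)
Your proof is correct and follows essentially the same route as the paper, which simply invokes the energy argument of Theorem \ref{wellposed}; your only addition is to spell out the adaptation the paper leaves implicit, namely first extracting $\tilde{\bm u}_h=\bm 0$ from \eqref{xhdg5_1} (read, as it must be, as an equation for $\tilde{\bm u}_h$ tested against $\tilde{\bm M}_h$) before the energy identity for \eqref{xhdg1_1}--\eqref{xhdg4_1} closes. The remaining steps --- $\bm L_h=\bm 0$, $\nabla_h\bm u_h=\bm 0$ via $\bm w=\nabla_h\bm u_h$, propagation of the elementwise constant through the single-valued $\hat{\bm u}_h$ and the boundary faces in $\varepsilon_h^{\Gamma}$, and $p_h=0$ from \eqref{xhdg4_1} together with $p_h\in L_0^2(\Omega)$ --- mirror the proof of Theorem \ref{wellposed} exactly.
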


\section{Numerical experiments}

In this section, we provide five 2-dimensional numerical examples 
to verify the performance of the proposed X-HDG method. 
\begin{exmp}Square domain with  circular interface  \cite{Adjerid2015An}: $\bm{g}_N^{\Gamma} = 0$\label{Circlejumpzero}
\end{exmp}

Consider the problem \eqref{pb1} 
 with $\Omega = [-1,1]^2$, $\Omega_1 = \{(x,y)\in\Omega: r = \sqrt{x^2+y^2}>r_0 = \sqrt{3/10}\}$,  $\Omega_2 = \Omega\setminus\bar{\Omega}_1$, and
$\alpha_1 = \alpha_2 = 0$.  The exact solution $(\bm{u} = (u_1,u_2), p)$ is given 
by
\begin{eqnarray*}
u_1(x,y) = &
\frac{1}{\nu_i}y(x^2+y^2-0.3) & \text{ in}\ \Omega_i, \quad i=1,2,\\
u_2(x,y) =& 
-\frac{1}{\nu_i}x(x^2+y^2-0.3)  & \text{ in}\ \Omega_i, \quad i=1,2,\\
p(x,y) = &\frac{1}{10}(x^3-y^3), & \text{ in}\ \Omega_1\cup\Omega_2. 
\end{eqnarray*}
The force term, boundary conditions and interface conditions can be derived explicitly.

We take $(\nu_1,\nu_2) = (10^{-3},1), (1,10^{-3}), (10^{-3},10^{-3})$, and use   $N\times N$ uniform triangular/rectangular meshes  (cf.  Figure \ref{circledomain}) in the X-HDG scheme \eqref{xhdgscheme}. Error results of the  numerical solutions   are listed in Table \ref{circlek_1}, and the  solutions $\bm{u}_h$ and $p_h$ at $(\nu_1, \nu_2) =(10^{-3},1)$ and $128\times 128$ triangular 
mesh are shown in Figure \ref{circle0001_1fig}. 

From Table \ref{circlek_1} we can see that in all cases  the X-HDG method yields  optimal convergence orders,  i.e.  first order    for $\lVert L-L_h \rVert_0$, $\lVert \nabla u-\nabla_hu_h \rVert_0$ and $\lVert p-p_h \rVert_0$, and second  order  for  $\lVert u-u_{h}\rVert_0$.
     These results are  conformable to Theorems \ref{1norm} and  \ref{velocity0norm}.

\begin{figure}[htbp]
\centering
\includegraphics[height = 4.5 cm,width=5 cm]{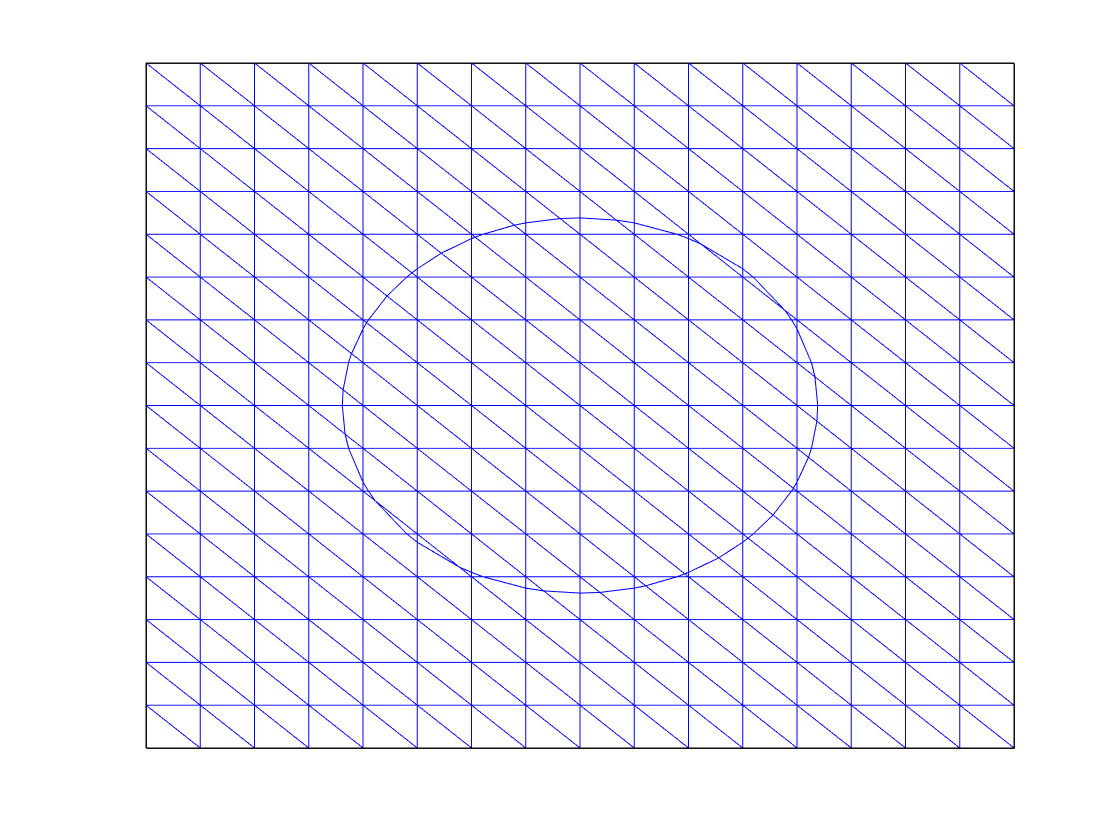} 
\includegraphics[height = 4.5 cm,width=5 cm]{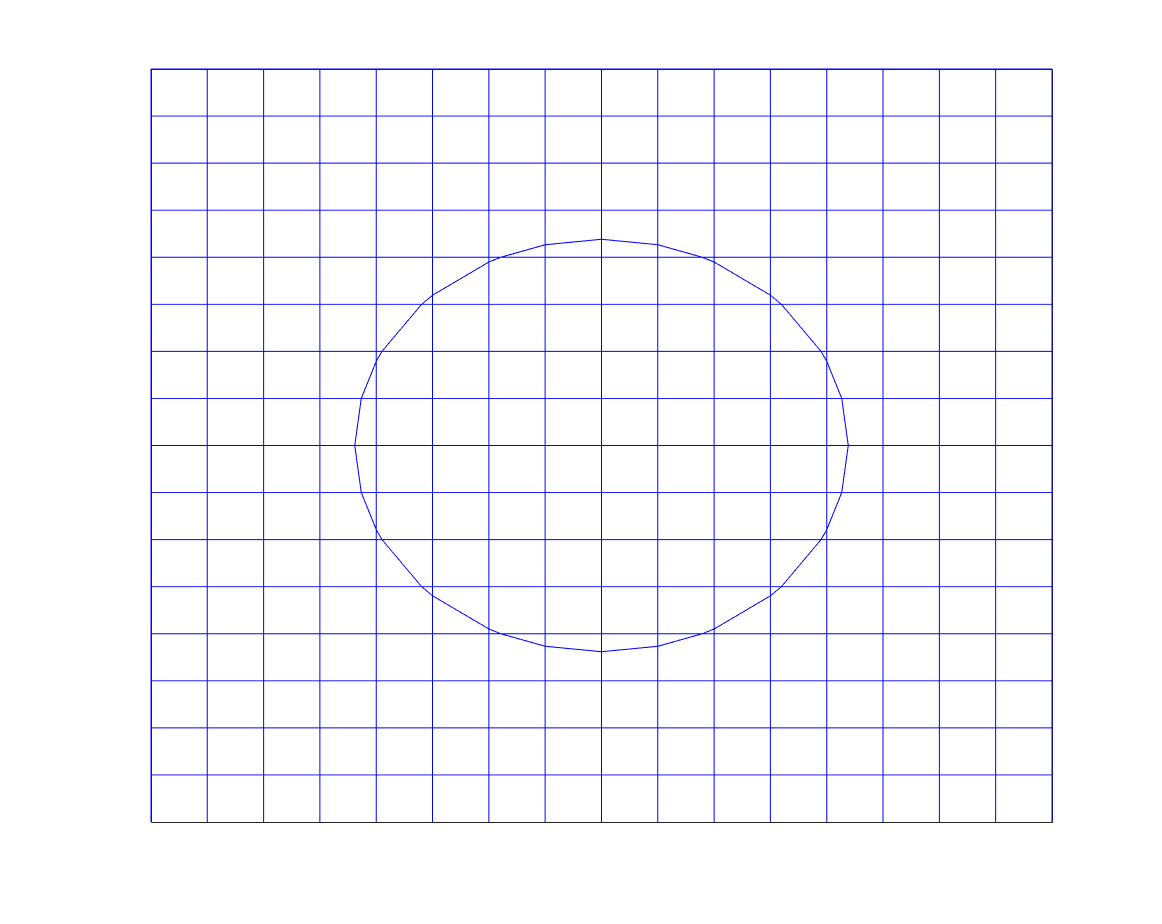} 
\caption{The square domain with circular interface at $16\times16$ meshes: triangular mesh(left) and rectangular mesh(right). }
\label{circledomain}
\end{figure}

\begin{figure}[ht]
	\centering
	\begin{minipage}[t]{0.3\textwidth}
		\includegraphics[height = 5 cm,width=5  cm]{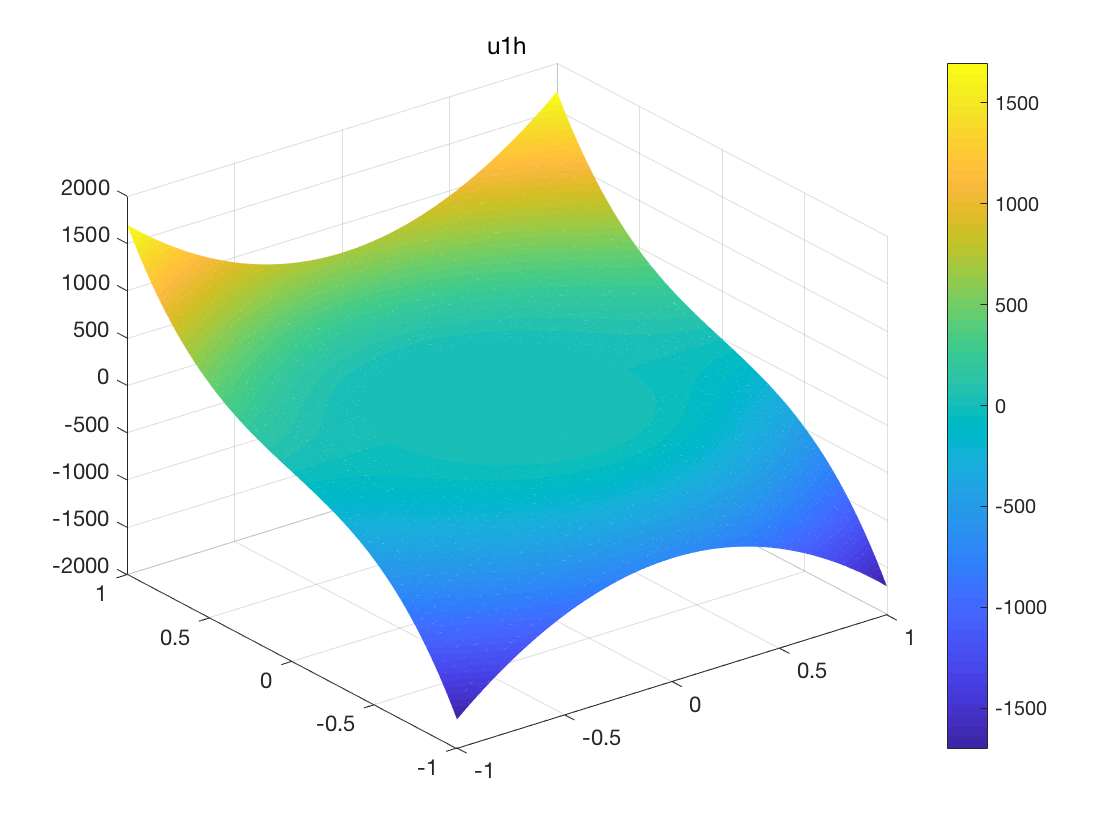} 
	\end{minipage}
	\begin{minipage}[t]{0.3\textwidth}
		\includegraphics[height = 5 cm,width=5  cm]{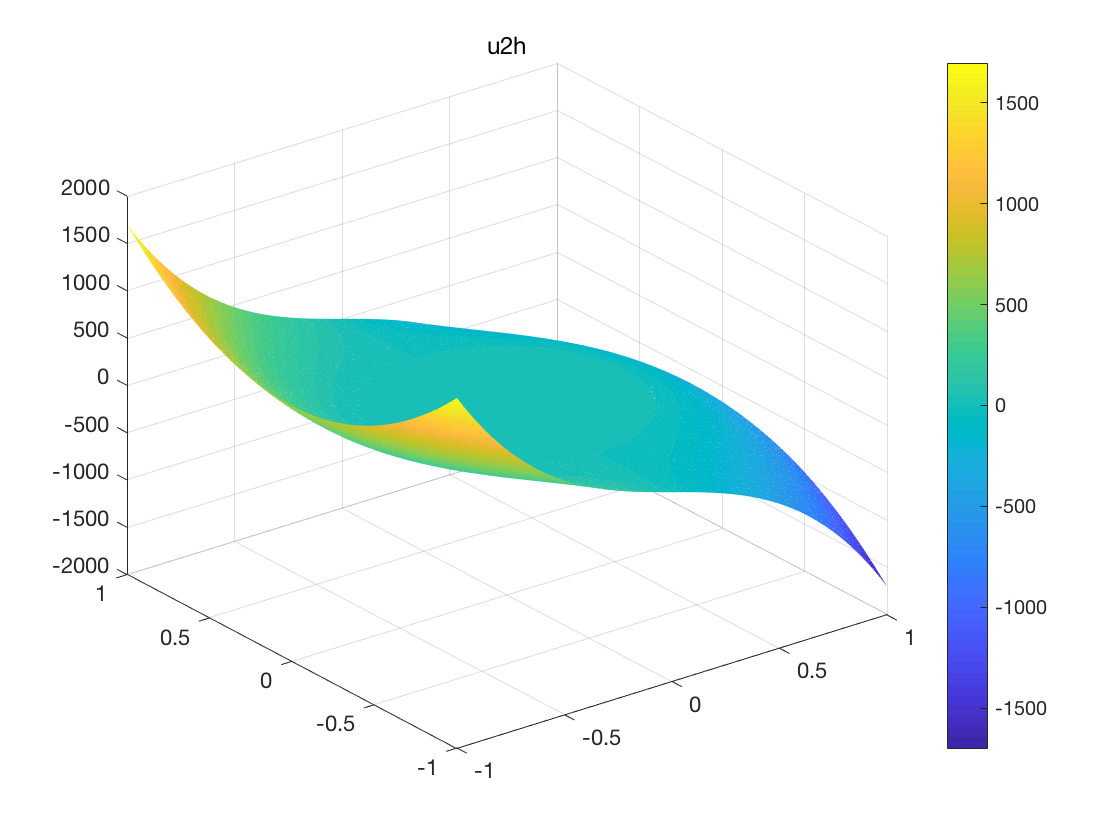}
	\end{minipage}
	\begin{minipage}[t]{0.3\textwidth}
		\includegraphics[height = 5 cm,width=5 cm]{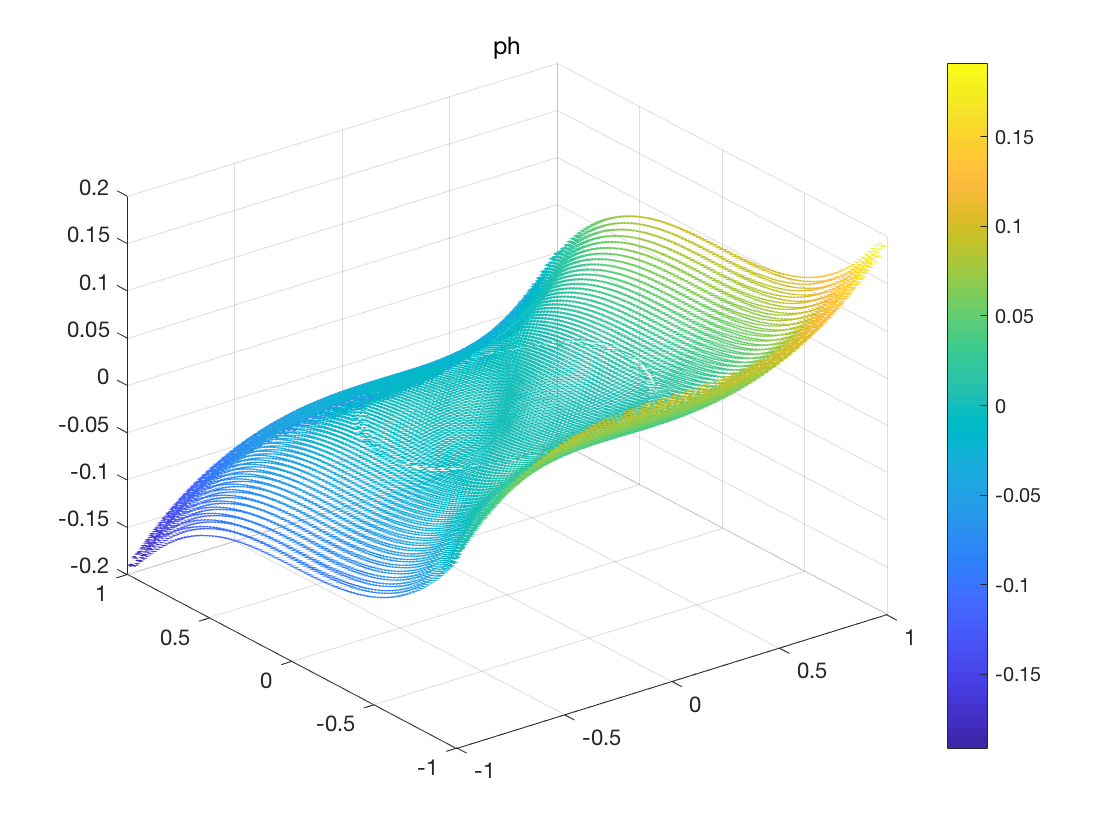}
	\end{minipage}
	\tiny\caption{The X-HDG solutions $u_{1h}$(left), $u_{2h}$(middle) and $p_h$(right) at $
		(\nu_1,\nu_2)=(10^{-3},1)$ and $128\times 128$ triangular mesh: Example \ref{Circlejumpzero}. }\label{circle0001_1fig}
\end{figure}

\begin{table}[H]
\normalsize
\caption{History of convergence for the X-HDG scheme \eqref{xhdgscheme}: Example \ref{Circlejumpzero}}\label{circlek_1}
\centering
\footnotesize
\subtable[ Triangular meshes]{
\begin{tabular}{p{0.4cm}<{\centering}|p{0.4cm}<{\centering}|p{1.4cm}<{\centering}|p{1.45cm}<{\centering}|
p{0.45cm}<{\centering}|p{1.45cm}<{\centering}|p{0.45cm}<{\centering}|p{1.45cm}<{\centering}|p{0.45cm}<{\centering}|
p{1.45cm}<{\centering}|p{0.45cm}<{\centering}}
\hline 
\multirow{1}{*}{$\nu_1$}&\multirow{1}{*}{$\nu_2$}& \multirow{2}{*}{mesh}& 
\multicolumn{2}{c|}{$\frac{\lVert u-u_{h}\rVert_0}{\lVert u\rVert_0}$ }&\multicolumn{2}{c|}{$\frac{\lVert L-L_h \rVert_0}{\lVert L\rVert_0}$}&\multicolumn{2}{c|}{$\frac{\lVert \nabla u-\nabla_hu_h \rVert_0}{\lVert \nabla 
u\rVert_0}$}&\multicolumn{2}{c}{$\frac{\lVert p-p_h \rVert_0}{\lVert p\rVert_0}$}\cr\cline{4-11} 
&&&error&order&error&order&error&order&error&order\cr 
\cline{1-11}
\multirow{4}{*}{$1$}&\multirow{4}{*}{$10^{-3}$}
&$16\times 16$ &1.4633E-01 &-- &8.8492E-02
 &-- &3.0103E-01 &-- &3.3596E-01 &-- \\
&&$32\times 32$ &3.7945E-02 &1.95 &4.4647E-02 &0.99 &1.4818E-01 
&1.02 &1.7004E-01 &0.98 \\
&&$64\times 64$ &9.4540E-03 &2.00 &2.2413E-02
 &0.99 &7.4780E-02 &0.99 &8.4837E-02 &1.00 \\
&&$128\times 128$&2.3843E-03 &1.99 &1.1227E-02
 &1.00 &3.7645E-02 &0.99 &4.2430E-02 &1.00 \\
\hline
\multirow{4}{*}{$10^{-3}$}&\multirow{4}{*}{$1$}
&$16\times 16$ &2.5011E-02 &-- &8.8606E-02
 &-- &9.8525E-02 &-- &3.4740E-01 &-- \\
&&$32\times 32$ &6.1643E-03 &2.02 &4.4654E-02 &0.99 &4.9169E-02 
&1.03 &1.7214E-01 &1.01 \\
&&$64\times 64$ &1.5228E-03 &2.02 &2.2415E-02
 &0.99 &2.4599E-02 &1.00 &8.5033E-02 &1.02 \\
&&$128\times 128$ &3.7796E-04 &2.01
&1.1227E-02 &1.00 &1.2301E-02 &1.00 &4.2462E-02 &1.00 \\
\hline
\multirow{4}{*}{$10^{-3}$}&\multirow{4}{*}{$10^{-3}$}
&$16\times 16$ &2.5701E-02 &-- &8.8539E-02
 &-- &1.0056E-01 &-- &3.3728E-01 &-- \\
&&$32\times 32$ &6.3464E-03 &2.02 &4.4647E-02 &0.99 &5.0251E-02 
&1.00 &1.7016E-01 &0.99 \\
&&$64\times 64$ &1.5722E-03 &2.01 &2.2414E-02
 &0.99 &2.5169E-02 &1.00 &8.4820E-02 &1.00 \\
&&$128\times 128$ &3.9075E-04 &2.01
&1.1227E-02 &1.00 &1.2595E-02 &1.00 &4.2430E-02 &1.00 \\
\hline
\end{tabular}
}
\subtable[ Rectangular meshes]{
\begin{tabular}{p{0.4cm}<{\centering}|p{0.4cm}<{\centering}|p{1.4cm}<{\centering}|p{1.45cm}<{\centering}|
p{0.45cm}<{\centering}|p{1.45cm}<{\centering}|p{0.45cm}<{\centering}|p{1.45cm}<{\centering}|p{0.45cm}<{\centering}|
p{1.45cm}<{\centering}|p{0.45cm}<{\centering}}
\hline 
\multirow{1}{*}{$\nu_1$}&\multirow{1}{*}{$\nu_2$}& \multirow{2}{*}{mesh}& 
\multicolumn{2}{c|}{$\frac{\lVert u-u_{h}\rVert_0}{\lVert u\rVert_0}$ }&\multicolumn{2}{c|}{$\frac{\lVert L-L_h \rVert_0}{\lVert L\rVert_0}$}&\multicolumn{2}{c|}{$\frac{\lVert \nabla u-\nabla_hu_h \rVert_0}{\lVert \nabla 
u\rVert_0}$}&\multicolumn{2}{c}{$\frac{\lVert p-p_h \rVert_0}{\lVert p\rVert_0}$}\cr\cline{4-11} 
&&&error&order&error&order&error&order&error&order\cr 
\cline{1-11}
\multirow{4}{*}{$1$}&\multirow{4}{*}{$10^{-3}$}
&$16\times 16$ &2.2520E-01 &-- &9.4123E-02
 &-- &2.8633E-01 &-- &2.6266E-01 &-- \\
&&$32\times 32$ &6.0494E-02 &1.90 &4.6733E-02 &1.01 &1.3749E-01 
&1.06 &1.1403E-01 &1.20 \\
&&$64\times 64$ &1.5630E-02 &1.95 &2.3310E-02
 &1.00 &6.9627E-02 &0.98 &4.5547E-02 &1.32 \\
&&$128\times 128$&3.9901E-03 &1.97 &1.1647E-02
 &1.00 &3.4854E-02 &1.00 &1.9095E-02 &1.25 \\
\hline
\multirow{4}{*}{$10^{-3}$}&\multirow{4}{*}{$1$}
&$16\times 16$ &4.2156E-02 &-- &9.4080E-02
 &-- &9.2973E-02 &-- &2.3793E-01 &-- \\
&&$32\times 32$ &1.0484E-02 &2.01 &4.6745E-02 &1.01 &4.5792E-02 
&1.02 &1.1481E-01 &1.05 \\
&&$64\times 64$ &2.6118E-03 &2.01 &2.3310E-02
 &1.00 &2.2778E-02 &1.01 &4.4972E-02 &1.35 \\
&&$128\times 128$ &6.5275E-04 &2.00
&1.1647E-02 &1.00 &1.1372E-02 &1.00 &1.8905E-02 &1.25 \\
\hline
\multirow{4}{*}{$10^{-3}$}&\multirow{4}{*}{$10^{-3}$}
&$16\times 16$ &4.3569E-02 &-- &9.4070E-02
 &-- &9.5143E-02 &-- &2.5155E-01 &-- \\
&&$32\times 32$ &1.0876E-02 &2.00 &4.6728E-02 &1.01 &4.6851E-02 
&1.02 &1.1247E-01 &1.16 \\
&&$64\times 64$ &2.7165E-03 &2.00 &2.3307E-02
 &1.00 &2.3329E-02 &1.01 &4.4363E-02 &1.34 \\
&&$128\times 128$ &6.7919E-04 &2.00
&1.1646E-02 &1.00 &1.1649E-02 &1.00 &1.8786E-02 &1.24 \\
\hline
\end{tabular}
}
\end{table}

\begin{exmp}Square domain with  circular interface  \cite{Adjerid2015An}: $\bm{g}_N^{\Gamma} \neq 0$\label{Circlejump}
\end{exmp}

Consider  the same domain   and interface as   in  Example 
\ref{Circlejumpzero}. The exact solution  $(\bm{u} = (u_1,u_2), p)$ is given by
\begin{align*}
u_1(x,y) &=
1+\frac{1}{\nu_i}y{\rm sin}(x^2+y^2-0.3) \ \text{ in}\ \Omega_i, \quad i=1,2,\\
u_2(x,y) &=
2-\frac{1}{\nu_i}x{\rm sin}(x^2+y^2-0.3)  \ \text{ in}\ \Omega_i,  \quad i=1,2,\\
p(x,y) &= \left \{
\begin{array}{rl}
e^{x+y}-1.3798535909816816 \  \ \text{ in}\ \Omega_1,\\
\sqrt{1+x^2+y^2}-1.3798535909816816 \  \ \text{ in}\ \Omega_2,
\end{array}
\right. 
\end{align*}
and the coefficients $\nu$ and $\alpha$ are taken  as: (i) $(\nu_1,\nu_2) = (1,10^{-3}), (\alpha_1, \alpha_2) = (0,0)$;  (ii) $
(\nu_1,\nu_2) = (10^{-2},1), (\alpha_1, \alpha_2) = (1,0)$. 

From  Table \ref{circletablek_1}  we can observe 
that the convergence rates of $
\lVert \bm{u}-\bm{u}_h\rVert_0, \lVert \bm{L}-\bm{L}_h\rVert_0, \lVert \nabla\bm{u}-\nabla_h\bm{u}_h\rVert_0$ and $\lVert p-p_h\rVert_0$ are all optimal at triangular  and rectangular meshes.
We also  plot in Figure \ref{circlefigk_1_0001} the the numerical solutions $\bm{u}_h$ and $ p_h$ at $128\times 128$ triangular mesh 
with $(\nu_1,\nu_2) = (1,10^{-3}),  (\alpha_1, \alpha_2) = (0,0)$.
\begin{table}[H]
\normalsize
\caption{History of convergence for the X-HDG scheme \eqref{xhdgscheme}: Example \ref{Circlejump} }\label{circletablek_1}
\centering
\footnotesize
\subtable[ Triangular meshes]{
\begin{tabular}{p{0.4cm}<{\centering}|p{0.4cm}<{\centering}|p{0.4cm}<{\centering}|p{0.4cm}<{\centering}|
p{1.4cm}<{\centering}|p{1.45cm}<{\centering}|p{0.45cm}<{\centering}|p{1.45cm}<{\centering}|p{0.45cm}<{\centering}|
p{1.45cm}<{\centering}|p{0.45cm}<{\centering}|p{1.45cm}<{\centering}|p{0.45cm}<{\centering}}
\hline 
\multirow{1}{*}{$\alpha_1$}&\multirow{1}{*}{$\alpha_2$}&\multirow{1}{*}{$\nu_1$}&\multirow{1}{*}{$
\nu_2$}& \multirow{2}{*}{mesh}& 
\multicolumn{2}{c|}{$\frac{\lVert u-u_{h}\rVert_0}{\lVert u\rVert_0}$ }&\multicolumn{2}{c|}{$\frac{\lVert L-L_h 
\rVert_0}{\lVert L\rVert_0}$}&\multicolumn{2}{c|}{$\frac{\lVert \nabla u-\nabla_hu_h \rVert_0}{\lVert \nabla u\rVert_0}$}
&\multicolumn{2}{c}{$\frac{\lVert p-p_h \rVert_0}{\lVert p\rVert_0}$}\cr\cline{6-13} 
&&&&&error&order&error&order&error&order&error&order\cr 
\cline{1-13}
\multirow{4}{*}{$0$}&\multirow{4}{*}{$0$}&\multirow{4}{*}{$1$}&\multirow{4}{*}{$10^{-3}$}
&$16\times 16$ &1.4780E-01 &-- &9.1176E-02
 &-- &3.0346E-01 &-- &4.9323E-02 &-- \\
&&&&$32\times 32$ &3.8392E-02 &1.94
&4.6059E-02 &0.99 &1.4955E-01 &1.02 &2.3874E-02 &1.05 \\
&&&&$64\times 64$ &9.5775E-03 &2.00
&2.3134E-02 &0.99 &7.5525E-02 &0.99 &1.1738E-02 &1.02 \\
&&&&$128\times 128$ &2.4162E-03 &1.99
&1.1590E-02 &1.00 &3.8060E-02 &0.99 &5.8293E-03 &1.01 \\
\hline
\multirow{4}{*}{$1$}&\multirow{4}{*}{$0$}&\multirow{4}{*}{$10^{-2}$}&\multirow{4}{*}{$1$}
&$16\times 16$ &1.4245E-02 &-- &9.5200E-02 &-- &9.3735E-02 
 & -- &1.8235E-01 &--\\
&&&&$32\times 32$ &4.3225E-03 &1.72
&4.6587E-02 &1.03 &5.0761E-02 &0.88 &5.9483E-02 &1.62 \\
&&&&$64\times 64$ &1.1439E-03 &1.92
&2.3201E-02 &1.01 &2.6161E-02 &0.96 &1.8745E-02 &1.67 \\
&&&&$32\times 32$ &2.8998E-04 &1.98
&1.1598E-02 &1.00 &1.3196E-02 &0.99 &6.9188E-03 &1.44 \\
\hline
\end{tabular}
}
\subtable[ Rectangular meshes]{
\begin{tabular}{p{0.4cm}<{\centering}|p{0.4cm}<{\centering}|p{0.4cm}<{\centering}|p{0.4cm}<{\centering}|p{1.4cm}
<{\centering}|p{1.45cm}<{\centering}|p{0.45cm}<{\centering}|p{1.45cm}<{\centering}|p{0.45cm}<{\centering}|p{1.45cm}
<{\centering}|p{0.45cm}<{\centering}|p{1.45cm}<{\centering}|p{0.45cm}<{\centering}}
\hline 
\multirow{1}{*}{$\alpha_1$}&\multirow{1}{*}{$\alpha_2$}&\multirow{1}{*}{$\nu_1$}&\multirow{1}{*}{$\nu_2$}& 
\multirow{2}{*}{mesh}& 
\multicolumn{2}{c|}{$\frac{\lVert u-u_{h}\rVert_0}{\lVert u\rVert_0}$ }&\multicolumn{2}{c|}{$\frac{\lVert L-L_h 
\rVert_0}{\lVert L\rVert_0}$}&\multicolumn{2}{c|}{$\frac{\lVert \nabla u-\nabla_hu_h \rVert_0}{\lVert \nabla u\rVert_0}$}
&\multicolumn{2}{c}{$\frac{\lVert p-p_h \rVert_0}{\lVert p\rVert_0}$}\cr\cline{6-13} 
&&&&&error&order&error&order&error&order&error&order\cr 
\cline{1-13}
\multirow{4}{*}{$0$}&\multirow{4}{*}{$0$}&\multirow{4}{*}{$1$}&\multirow{4}{*}{$10^{-3}$}
&$16\times 16$ &2.2885E-01 &-- &9.6862E-02
 &-- &2.8766E-01 &-- &1.0153E-01 &-- \\
&&&&$32\times 32$ &6.1575E-02 &1.89 &4.7445E-02 &1.03 &1.3779E-01 &1.06 &4.6034E-02 
&1.14 \\
&&&&$64\times 64$ &1.5929E-02 &1.95 &2.3464E-02
 &1.02 &6.9707E-02 &0.98 &2.0969E-02 &1.13 \\
&&&&$128\times 128$ &4.0677E-03 &1.97 &1.1677E-02
 &1.01 &3.4873E-02 &1.00 &9.9306E-03 &1.08 \\
\hline
\multirow{4}{*}{$1$}&\multirow{4}{*}{$0$}&\multirow{4}{*}{$10^{-2}$}&\multirow{4}{*}{$1$}
&$16\times 16$ &2.2764E-02 &-- &1.1420E-01 &-- &9.7250E-02 
 & -- &3.6125E-01 &--\\
&&&&$32\times 32$ &8.0154E-03 &1.51 &5.1637E-02
 &1.15 &4.7396E-02 &1.04 &1.4858E-01 &1.28 \\
&&&&$64\times 64$ &2.2922E-03 &1.81 &2.4185E-02
 &1.09 &2.3104E-02 &1.04 &4.7582E-02 &1.64 \\
&&&&$128\times 128$ &5.9780E-04 &1.94 &1.1778E-02
 &1.04 &1.1426E-02 &1.02 &1.5112E-03 &1.65 \\
\hline
\end{tabular}
}
\end{table}
\begin{figure}[ht]
\centering
\begin{minipage}[t]{0.3\textwidth}
\includegraphics[height = 4.7 cm,width=5  cm]{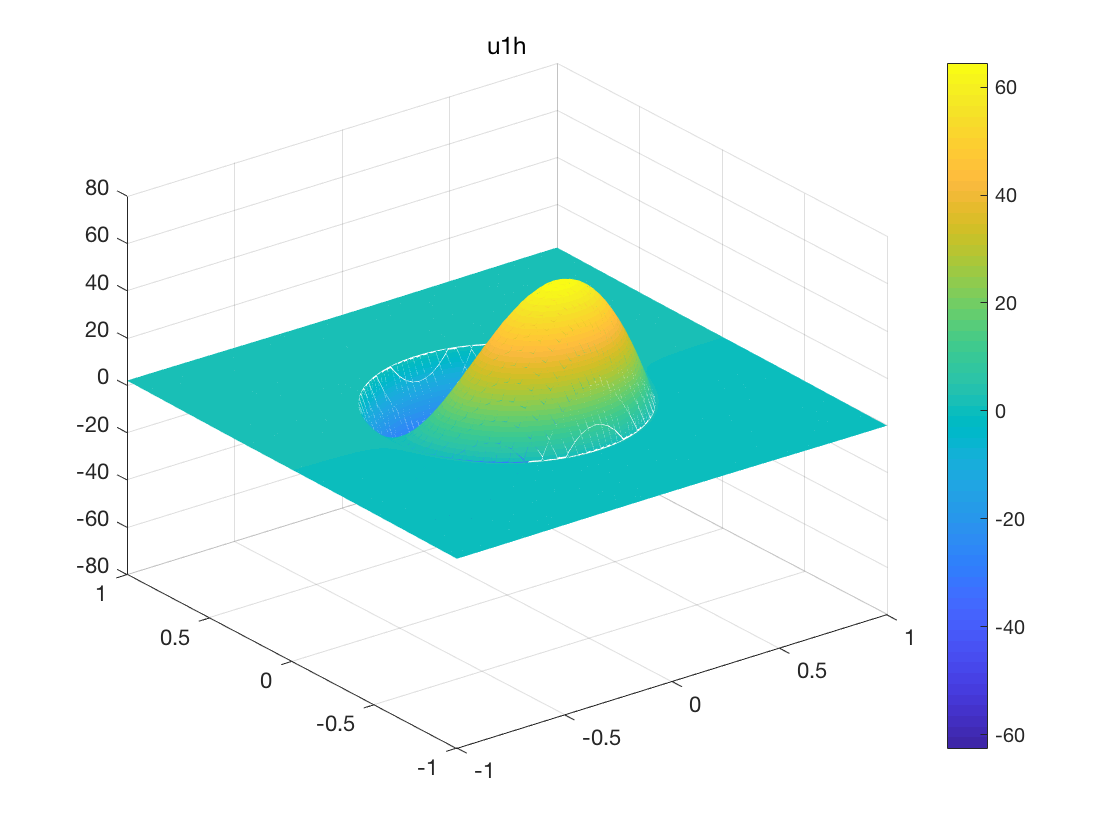} 
\end{minipage}
\begin{minipage}[t]{0.3\textwidth}
\includegraphics[height = 4.7 cm,width=5  cm]{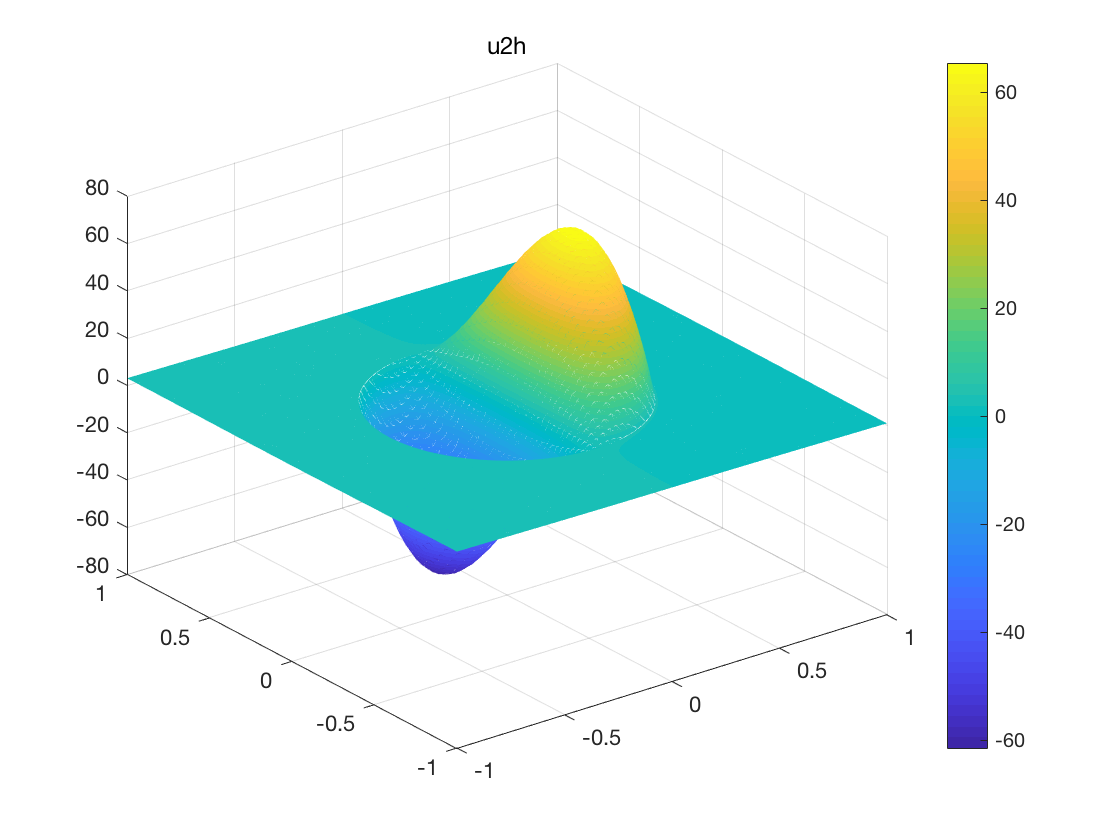}
\end{minipage}
\begin{minipage}[t]{0.3\textwidth}
\includegraphics[height = 4.7 cm,width=5  cm]{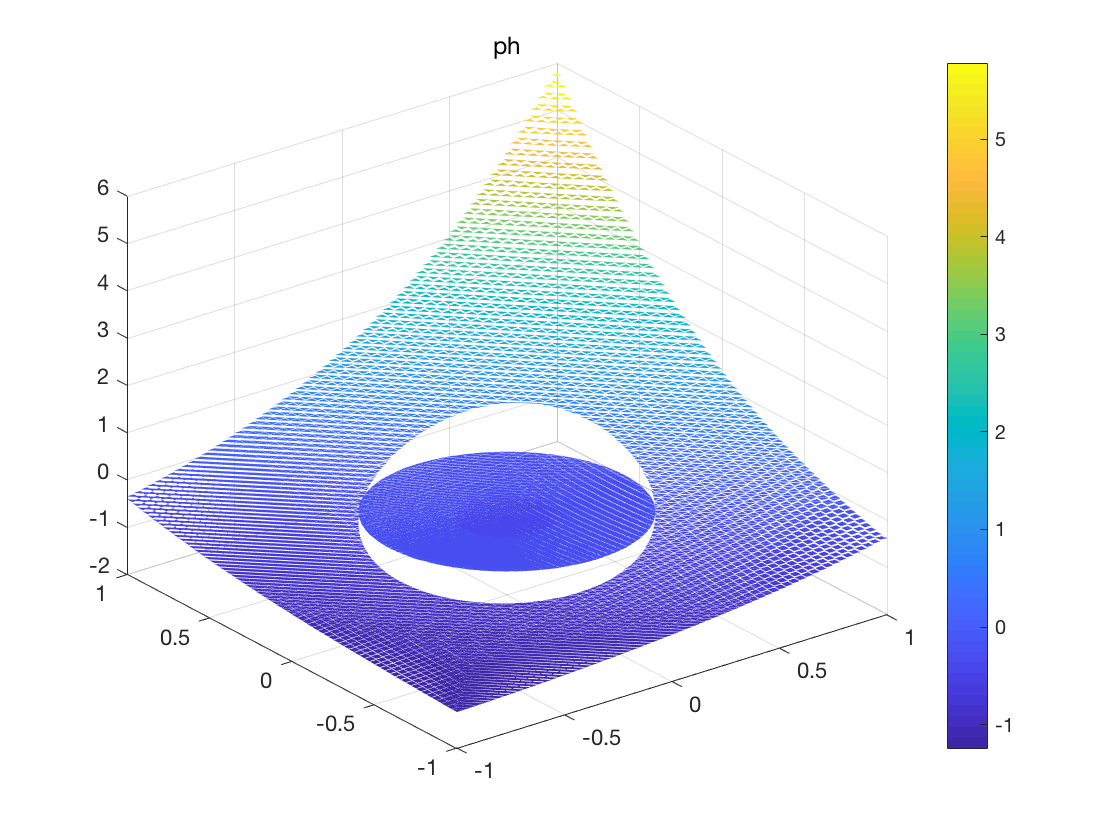}
\end{minipage} \tiny\caption{The X-HDG solutions $u_{1h}$(left), $u_{2h}$(middle) and $p_h$(right) at $
(\nu_1,\nu_2) = (1,10^{-3}), \alpha_1 = \alpha_2 = 0$ and $128\times 128$ triangular mesh: Example \ref{Circlejump}. }
\label{circlefigk_1_0001}
\end{figure}

\begin{exmp}A  laminar flow test in a square domain with straight line interface: $\bm{g}_N^{\Gamma}\neq0 $
\label{exampleline}
\end{exmp}

Take $\Omega = [0,1]^2$ (Figure
\ref{linedomain}) in \eqref{pb1}. Two kinds of fluids  with different viscosity   flow in the subdomains $\Omega_{1}=[0,1]\times 
[b_0,1]$ and $\Omega_{2}=[0,1]\times [0,b_0]$, respectively, with $b_0 = 0.4031$. The exact solution is given by
\begin{eqnarray*}
u_1(x,y) = &1-e^{\lambda_i}{\rm sin}(\frac{\pi}{b_0} y)&  {\rm in}\ \Omega_i, \quad i=1,2, \\
u_2(x,y) = &0  & {\rm in}\ \Omega_1\cup \Omega_2,\\
p(x,y) = &\frac{1}{2}e^{2\lambda_ix}-b_0(\frac{1}{4\lambda_2}e^{2\lambda_2}-1) -(1-b_0)(\frac{1}{4\lambda_1}
e^{2\lambda_1}-1) & {\rm in}\ \Omega_i, \quad i=1,2 ,
\end{eqnarray*}
where  
\begin{align*}
\lambda_i = \frac{1}{2\nu_i} - \sqrt{\frac{1}{4\nu_i^2}+4\pi^2}, \ i = 1,2,
\end{align*}
and $(\nu_1,\nu_2) = (1,10^{-2})$. We take $(\alpha_1,\alpha_2) = (0,0),
(0,1)$.

From  Table \ref{linetablek_1}, we can see that the X-HDG method yields optimal convergence rates for the numerical solutions at both
triangular   and rectangular meshes. 
We also show  in Figure \ref{lineuh10e0_10e-2_0_0} the numerical solutions $\bm{u}_{1h}$ and $p_h$ at $(\nu_1,\nu_2) = (1,10^{-2}),  (\alpha_1, \alpha_2) = (0,0)$ and  
$128\times 128$ triangular mesh.

\begin{figure}[htbp]
\centering
\includegraphics[height = 4.7 cm,width=5 cm]{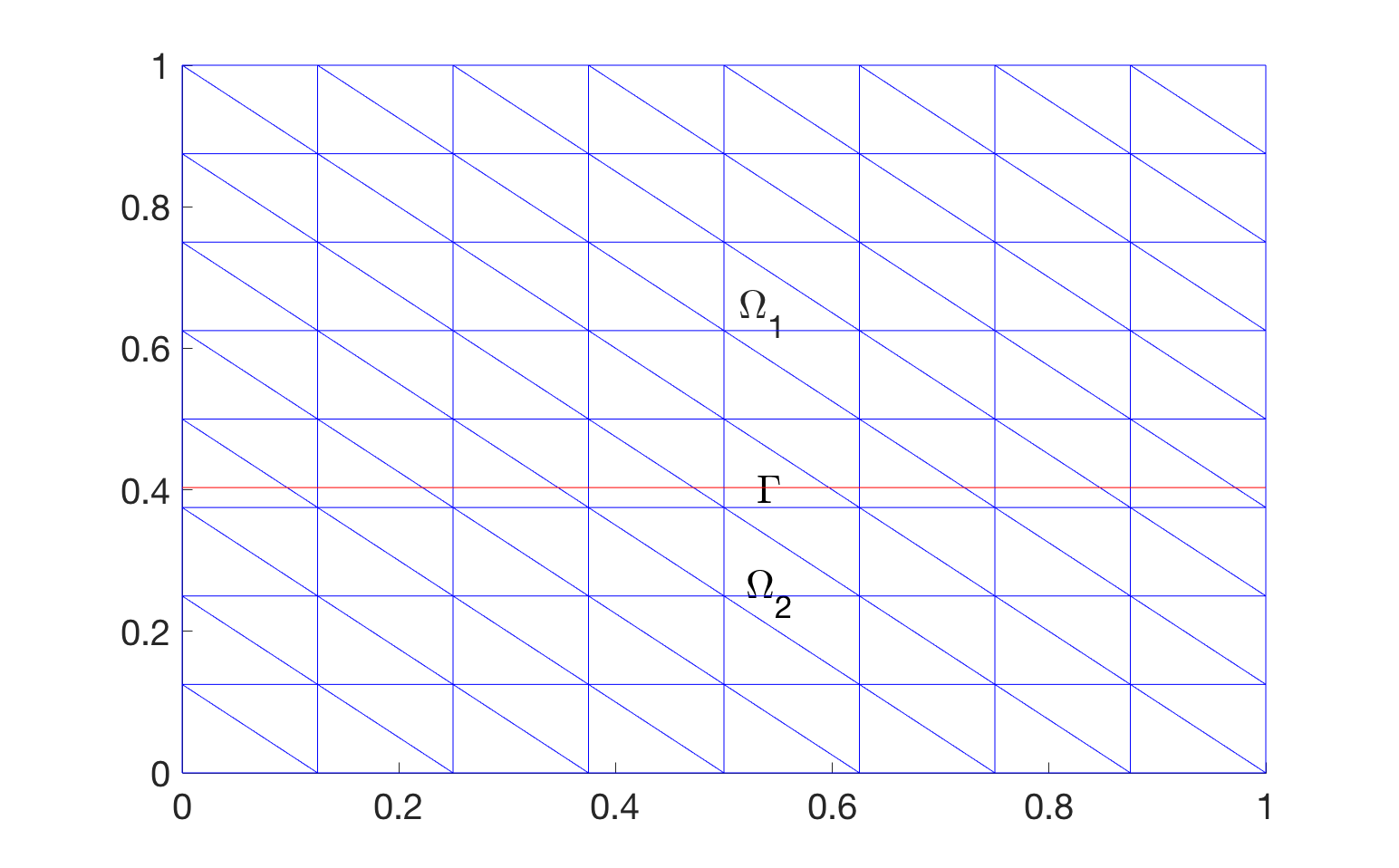} 
\includegraphics[height = 4.7 cm,width=5 cm]{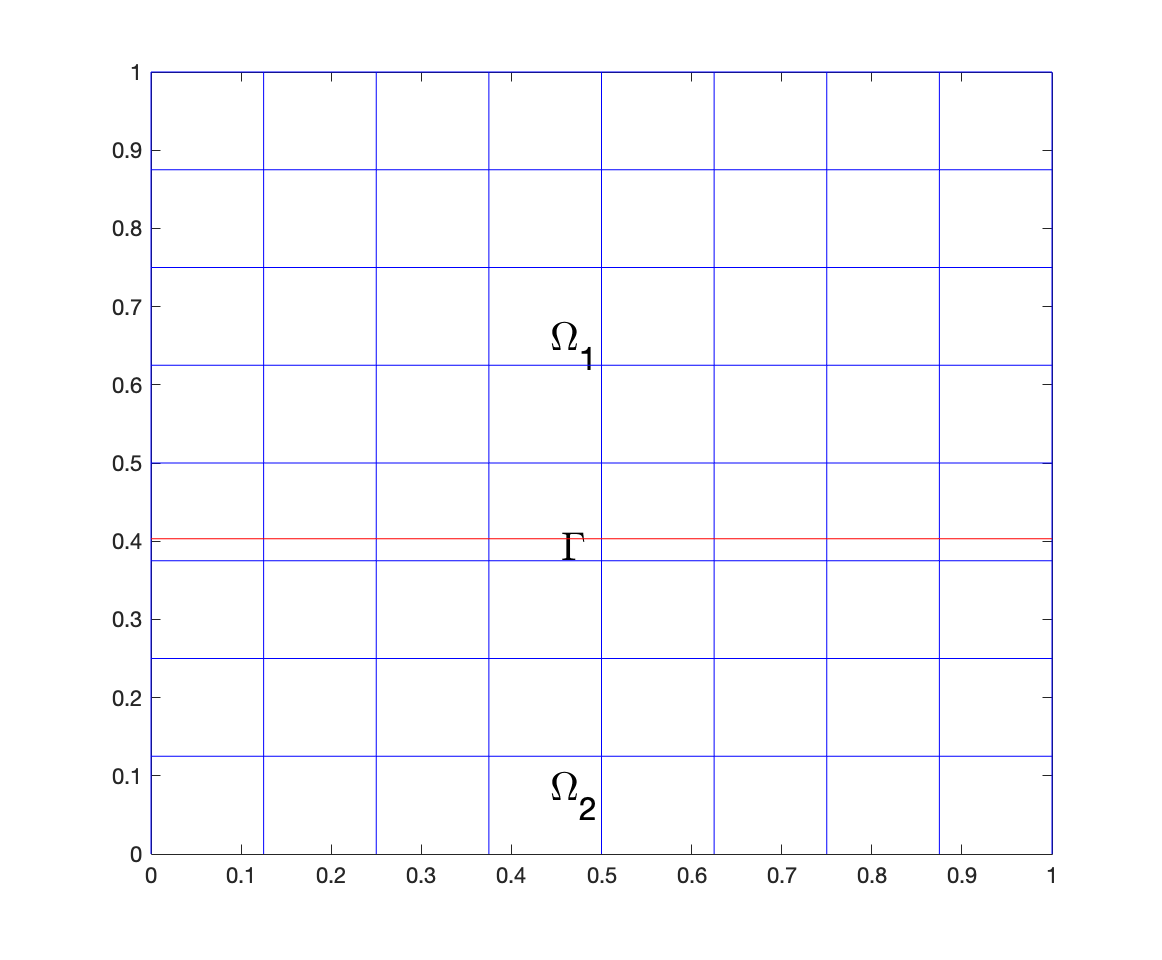} 
\caption{The domain with straight line interface at $8\times 8$ meshes: triangular mesh(left) and rectangular 
mesh(right). }\label{linedomain}
\end{figure}

\begin{table}[H]
\normalsize
\caption{History of convergence for the X-HDG scheme \eqref{xhdgscheme}: Example \ref{exampleline}}\label{linetablek_1}
\centering
\footnotesize
\subtable[ Triangular meshes]{
\begin{tabular}{p{0.4cm}<{\centering}|p{0.4cm}<{\centering}|p{0.4cm}<{\centering}|p{0.4cm}<{\centering}|
p{1.4cm}<{\centering}|p{1.45cm}<{\centering}|p{0.45cm}<{\centering}|p{1.45cm}<{\centering}|p{0.45cm}<{\centering}|
p{1.45cm}<{\centering}|p{0.45cm}<{\centering}|p{1.45cm}<{\centering}|p{0.45cm}<{\centering}}
\hline 
\multirow{1}{*}{$\alpha_1$}&\multirow{1}{*}{$\alpha_2$}&\multirow{1}{*}{$\nu_1$}&\multirow{1}{*}{$
\nu_2$}& \multirow{2}{*}{mesh}& 
\multicolumn{2}{c|}{$\frac{\lVert u-u_{h}\rVert_0}{\lVert u\rVert_0}$ }&\multicolumn{2}{c|}{$\frac{\lVert L-L_h \rVert_0}{\lVert L\rVert_0}$}&\multicolumn{2}{c|}{$\frac{\lVert \nabla u-\nabla_hu_h \rVert_0}{\lVert \nabla 
u\rVert_0}$}&\multicolumn{2}{c}{$\frac{\lVert p-p_h \rVert_0}{\lVert p\rVert_0}$}\cr\cline{6-13} 
&&&&&error&order&error&order&error&order&error&order\cr 
\cline{1-13}
\multirow{5}{*}{$0$}&\multirow{5}{*}{$0$}&\multirow{5}{*}{$1$}&\multirow{5}{*}{$10^{-2}$}
&$8\times 8$ &1.0441E-01 &-- &2.5745E-01
 &-- &3.5136E-01 &-- &1.5905E-01 &-- \\
&&&&$16\times 16$ &2.6874E-02 &1.96
&1.3388E-01 &0.94 &1.6803E-01 &1.06 &8.3824E-02 &0.92 \\
&&&&$32\times 32$ &6.8710E-03 &1.97
 &6.8331E-02 &0.97 &8.3614E-02 &1.01 &4.1910E-02 &1.00 \\
&&&&$64\times 64$ &1.7249E-03 &1.99
&3.4397E-02 &0.99 &4.1890E-02 &1.00 &2.0649E-02 &1.02 \\
&&&&$128\times 128$ &4.3194E-04 &2.00
&1.7253E-02 &1.00 &2.1019E-02 &0.99 &1.0208E-02 &1.02 \\
\hline
 \multirow{5}{*}{$0$}&\multirow{5}{*}{$1$}&\multirow{5}{*}{$1$}&\multirow{5}{*}{$10^{-2}$}
&$8\times 8$ &7.5215E-02 &-- &2.4127E-01
 &-- &2.8421E-01 &-- &1.9447E-01 &-- \\
&&&&$16\times 16$ &2.3765E-02 &1.66
&1.2792E-01 &0.92 &1.5548E-01 &0.87 &9.2509E-02 &1.07 \\
 &&&&$32\times 32$ &6.5246E-03 &1.86 &6.7253E-02 &0.93
 &8.1713E-02 &0.93 &4.3366E-02 &1.09 \\
&&&&$64\times 64$ &1.6715E-03 &1.96
&3.4247E-02 &0.97 &4.1634E-02 &0.97 &2.0852E-02 &1.06 \\
&&&&$128\times 128$ &4.2082E-04 &1.99
&1.7234E-02 &0.99 &2.0985E-02 &0.99 &1.0235E-02 &1.03 \\
\hline
\end{tabular}
}
\subtable[ Rectangular meshes]{
\begin{tabular}{p{0.4cm}<{\centering}|p{0.4cm}<{\centering}|p{0.4cm}<{\centering}|p{0.4cm}<{\centering}|p{1.4cm}
<{\centering}|p{1.45cm}<{\centering}|p{0.45cm}<{\centering}|p{1.45cm}<{\centering}|p{0.45cm}<{\centering}|p{1.45cm}
<{\centering}|p{0.45cm}<{\centering}|p{1.45cm}<{\centering}|p{0.45cm}<{\centering}}
\hline 
\multirow{1}{*}{$\alpha_1$}&\multirow{1}{*}{$\alpha_2$}&\multirow{1}{*}{$\nu_1$}&\multirow{1}{*}{$\nu_2$}& 
\multirow{2}{*}{mesh}& 
\multicolumn{2}{c|}{$\frac{\lVert u-u_{h}\rVert_0}{\lVert u\rVert_0}$ }&\multicolumn{2}{c|}{$\frac{\lVert L-L_h 
\rVert_0}{\lVert L\rVert_0}$}&\multicolumn{2}{c|}{$\frac{\lVert \nabla u-\nabla_hu_h \rVert_0}{\lVert \nabla u\rVert_0}$}
&\multicolumn{2}{c}{$\frac{\lVert p-p_h \rVert_0}{\lVert p\rVert_0}$}\cr\cline{6-13} 
&&&&&error&order&error&order&error&order&error&order\cr 
\cline{1-13}
\multirow{5}{*}{$0$}&\multirow{5}{*}{$0$}&\multirow{5}{*}{$1$}&\multirow{5}{*}{$10^{-2}$}
&$8\times 8$ &1.5667E-01 &-- &7.3078E-01
 &-- &3.4141E-01 &-- &1.8268E-01 &-- \\
&&&&$16\times 16$ &4.1512E-02 &1.92 &2.8501E-01 &1.36 &1.5353E-01 &1.15 
&9.9056E-02 &0.88 \\
&&&&$32\times 32$ &1.0668E-02 &1.96 &1.1163E-01 &1.35
 &7.2846E-02 &1.08 &5.5724E-02 &0.97 \\
&&&&$64\times 64$ &2.6985E-03 &1.98 &4.5773E-02
 &1.29 &3.5621E-02 &1.03 &2.5207E-02 &1.00 \\
&&&&$128\times 128$ &6.7743E-04 &1.99 &1.9938E-02
 &1.20 &1.7657E-02 &1.01 &1.2504E-02 &1.01 \\
\hline
\multirow{5}{*}{$0$}&\multirow{5}{*}{$1$}&\multirow{5}{*}{$1$}&\multirow{5}{*}{$10^{-2}$}
&$8\times 8$ &9.5822E-02 &-- &7.9015E-01
 &-- &2.8685E-01 &-- &2.2368E-01 &-- \\
&&&&$16\times 16$ &3.3711E-02 &1.51 &3.1917E-01 &1.31 &1.4574E-01 &0.98 &1.1284E-01 
&0.99 \\
&&&&$32\times 32$ &9.8025E-03 &1.78 &1.2139E-01 &1.39
 &7.2240E-02 &1.01 &5.3337E-02 &1.08 \\
&&&&$64\times 64$ &2.5797E-03 &1.93 &4.7640E-02
 &1.35 &3.5580E-02 &1.02 &2.5633E-02 &1.06 \\
&&&&$128\times 128$ &6.5462E-04 &1.98 &2.0227E-02
 &1.24 &1.7654E-02 &1.01 &1.2561E-02 &1.03 \\
\hline
\end{tabular}
}
\end{table}

\begin{figure}[ht]
\centering
\begin{minipage}[t]{0.4\textwidth}
\includegraphics[height = 4.7 cm,width=5 cm]{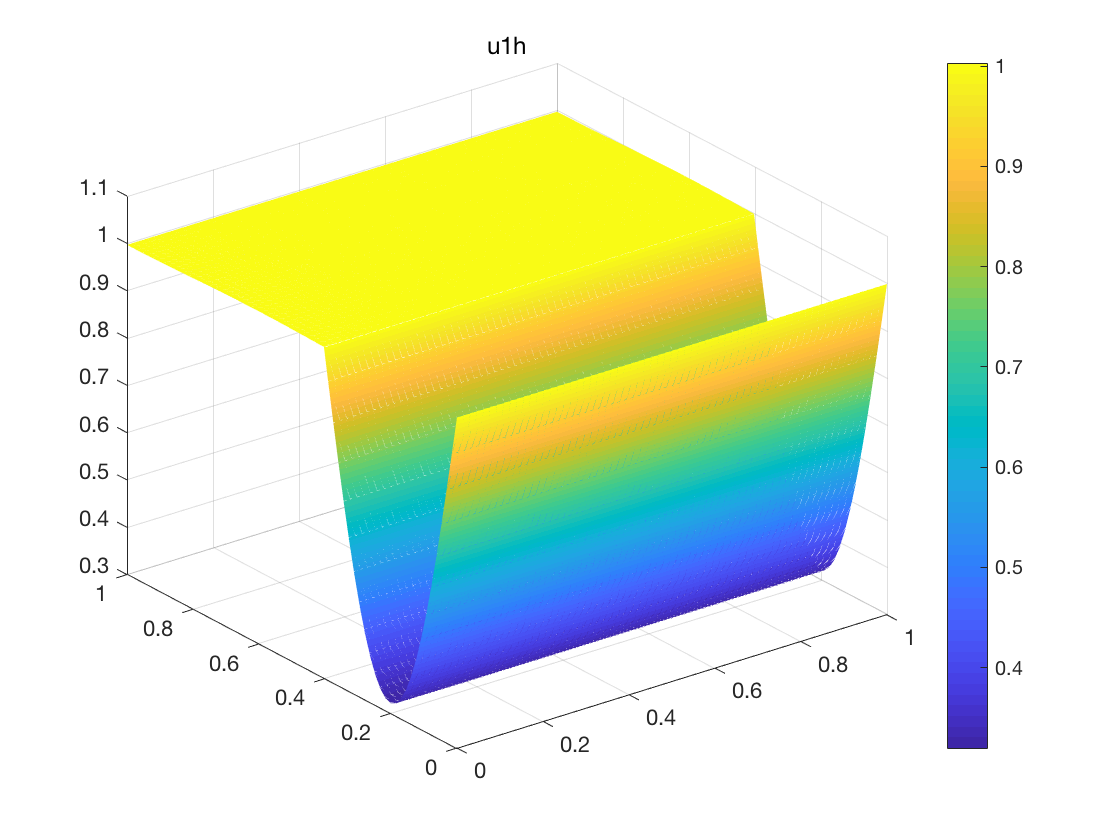} 
\end{minipage}
\begin{minipage}[t]{0.4\textwidth}
\includegraphics[height = 4.7 cm,width=5 cm]{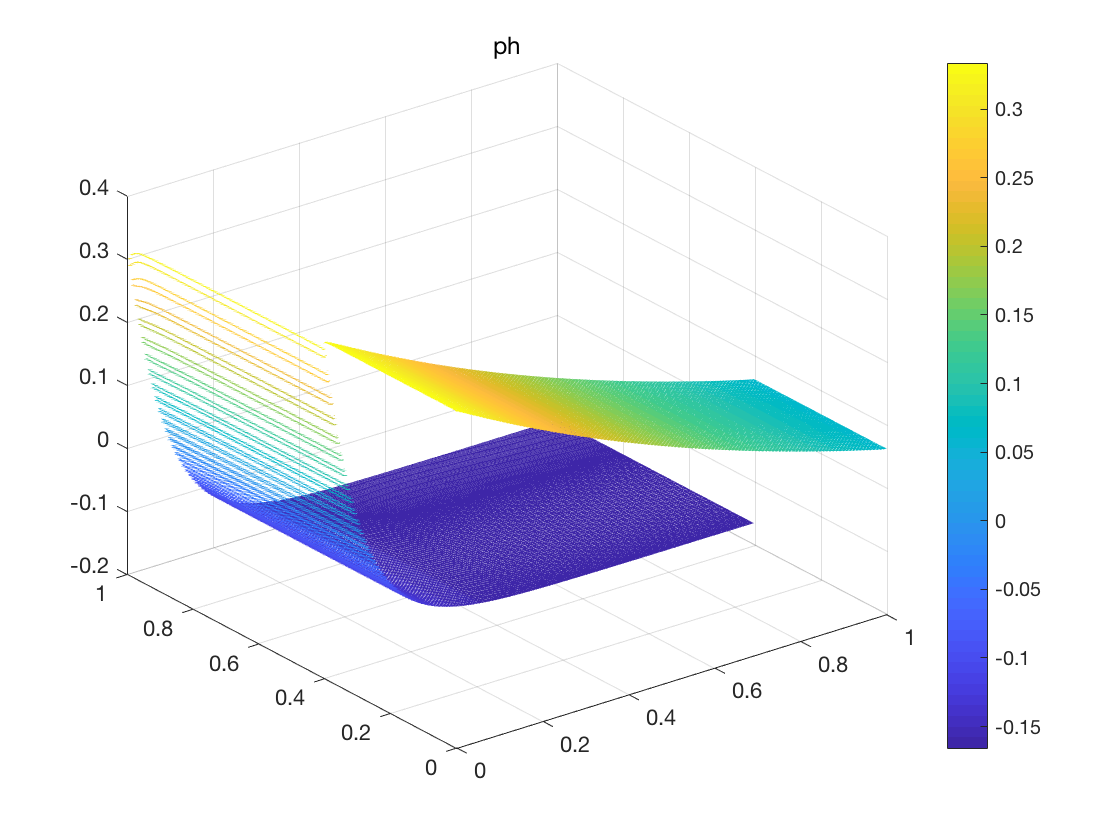} 
\end{minipage}
\tiny\caption{The X-HDG solutions $u_{1h}$(left) and $p_h$(right) at $(\nu_1,\nu_2) = (1,10^{-2}), \alpha_1 = 
\alpha_2 = 0$ and  $128\times 128$ triangular mesh:  Example \ref{exampleline}. }
\label{lineuh10e0_10e-2_0_0}
\end{figure}

\begin{exmp}Curved domain test 1: circular boundary 
\label{circle1}
\end{exmp}

Set $\Omega = \{(x-x_0)^2+(y-y_0)^2 \leq  r^2\}$ in the model problem \eqref{pb2} with a homogeneous 
boundary condition, where $x_0 = y_0 = \frac{1}{2}$ and $r =\frac{\sqrt{3}}{4}$ (Figure \ref{domain_1}). The exact solution is given by
\begin{align*}
u_1(x,y) &= y(x^2+y^2-r^2), \\
u_2(x,y) &= -x(x^2+y^2-r^2), \\
p(x,y) &= \frac{1}{10}(x^3-y^3),
\end{align*}
and the coefficients $(\nu,\alpha)=(1,0), (1,1), (0.01,1).$  

We take $\mathbb{B} = [0,1]^2$ in the X-HDG scheme  \eqref{xhdgscheme_1}. 
Table \ref{circletablek_4} shows that the boundary-unfitted X-HDG method is of optimal convergence rates for the numerical solutions.  Figure \ref{circlefigdomain} plots  the numerical solutions $\bm{u}_h$ and $ p_h$ at $128\times 
128$ triangular mesh.

\begin{figure}[htp]
\centering
\includegraphics[height = 4.7 cm,width=5  cm]{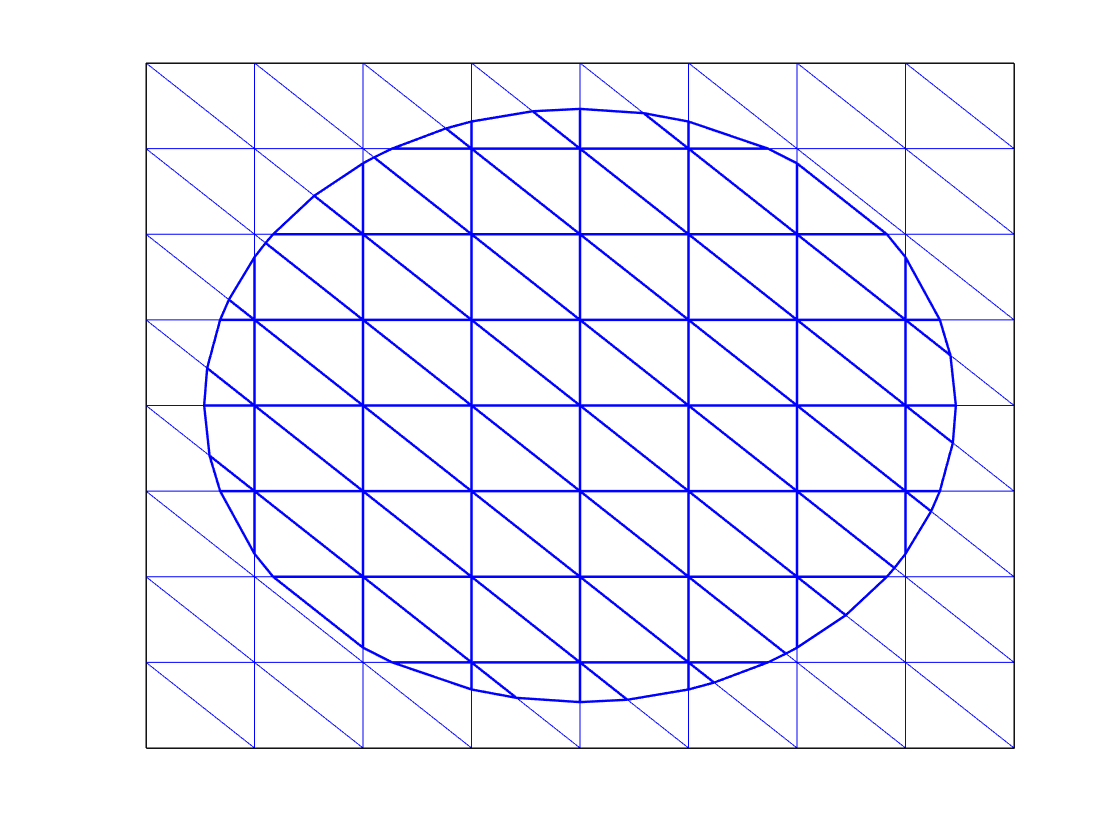} 
\includegraphics[height = 4.7 cm,width=5 cm]{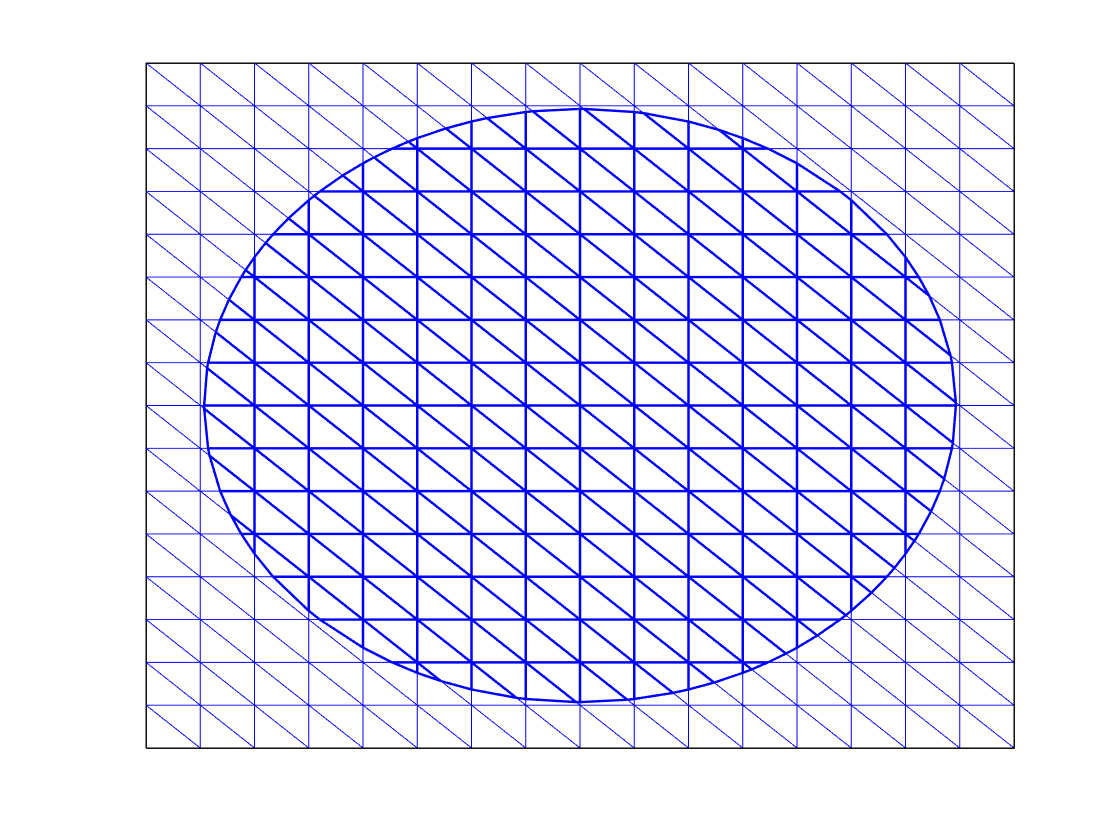} 
\caption{The disc domain at $8\times 8$ and $16\times 16$ meshes: Example  \ref{circle1} .
}\label{domain_1}
\end{figure}

\begin{table}[H]
\normalsize
\caption{History of convergence for the X-HDG scheme \eqref{xhdgscheme_1}: Example \ref{circle1} }\label{circletablek_4}
\centering
\footnotesize
{
\begin{tabular}{p{1cm}<{\centering}|p{1cm}<{\centering}|p{1.4cm}<{\centering}|p{1.45cm}<{\centering}|
p{0.45cm}<{\centering}|p{1.45cm}<{\centering}|p{0.45cm}<{\centering}|p{1.45cm}<{\centering}|p{0.45cm}<{\centering}|
p{1.45cm}<{\centering}|p{0.45cm}<{\centering}}
\hline 
\multirow{2}{*}{$\alpha $}&\multirow{2}{*}{$\nu $}& \multirow{2}{*}{mesh}& 
\multicolumn{2}{c|}{$\frac{\lVert u-u_{h}\rVert_0}{\lVert u\rVert_0}$ }&\multicolumn{2}{c|}{$\frac{\lVert L-L_h \rVert_0}{\lVert L\rVert_0}$}&\multicolumn{2}{c|}{$\frac{\lVert \nabla u-\nabla_hu_h \rVert_0}{\lVert \nabla 
u\rVert_0}$}&\multicolumn{2}{c}{$\frac{\lVert p-p_h \rVert_0}{\lVert p\rVert_0}$}\cr\cline{4-11} 
&&&error&order&error&order&error&order&error&order\cr 
\cline{1-11}
\multirow{5}{*}{0}&\multirow{5}{*}{1}
&$8\times8$ &2.0363E-02 &-- &7.5156E-02 &-- &1.1122E-01 &-- &8.1187E-01 &-- \\
&&$16\times16$ &5.1184E-03 &1.99 &3.8783E-02
 &0.95 &5.5770E-02 &1.00 &2.5358E-01 &1.68 \\
&&$32\times32$ &1.2773E-03 &2.00 &1.9648E-02 &0.98 &2.8352E-02 &0.98 &8.7613E-02 
&1.53 \\
&&$64\times64$ &3.1972E-04 &2.00 &9.8908E-03
 &0.99 &1.4277E-02 &0.99 &3.5517E-02 &1.30 \\
&&$128\times128$ &8.0032E-05 &2.00 &4.9607E-03
 &1.00 &7.1467E-03 &1.00 &1.6197E-02 &1.13 \\
\hline
\multirow{5}{*}{1}&\multirow{5}{*}{1}
&$8\times8$ &2.2913E-02 &-- &8.3725E-02 &-- &1.1101E-01 &-- &1.6159E-00 &-- \\
&&$16\times16$ &6.0720E-03 &1.92 &4.3187E-02
 &0.96 &5.5742E-02 &0.99 &4.5306E-01 &1.83 \\
&&$32\times32$ &1.5612E-03 &1.96 &2.1956E-02 &0.98 &2.8348E-02 &0.98 &1.4085E-01 
&1.69 \\
&&$64\times64$ &3.9616E-04 &1.98 &1.1074E-02
 &0.99 &1.4276E-02 &0.99 &4.9710E-02 &1.50 \\
&&$128\times128$ &9.9862E-05 &1.99 &5.5587E-03
 &0.99 &7.1467E-03 &1.00 &2.1012E-02 &1.24 \\
\hline
\multirow{5}{*}{1}&\multirow{5}{*}{0.01}
&$8\times8$ &2.0363E-02 &-- &2.5740E-01 &-- &2.1070E-01 &-- &3.6030E-01 &-- \\
&&$16\times16$ &5.1184E-02 &1.38 &1.7239E-01
 &0.58 &1.5956E-01 &0.40 &1.4529E-01 &1.31 \\
&&$32\times32$ &1.2773E-03 &1.80 &9.5893E-02 &0.85 &9.3376E-02 &0.77 &5.6350E-02 
&1.37 \\
&&$64\times64$ &3.1972E-03 &1.93 &4.9693E-02
 &0.95 &4.9081E-02 &0.93 &2.4771E-02 &1.19 \\
&&$128\times128$ &8.0032E-04 &1.98 &2.5158E-02
 &0.98 &2.4953E-02 &0.98 &1.1893E-02 &1.06 \\
\hline
\end{tabular}
}
\end{table}
\begin{figure}[H]
\centering
\begin{minipage}[t]{0.3\textwidth}
\includegraphics[height = 4.7 cm,width=5  cm]{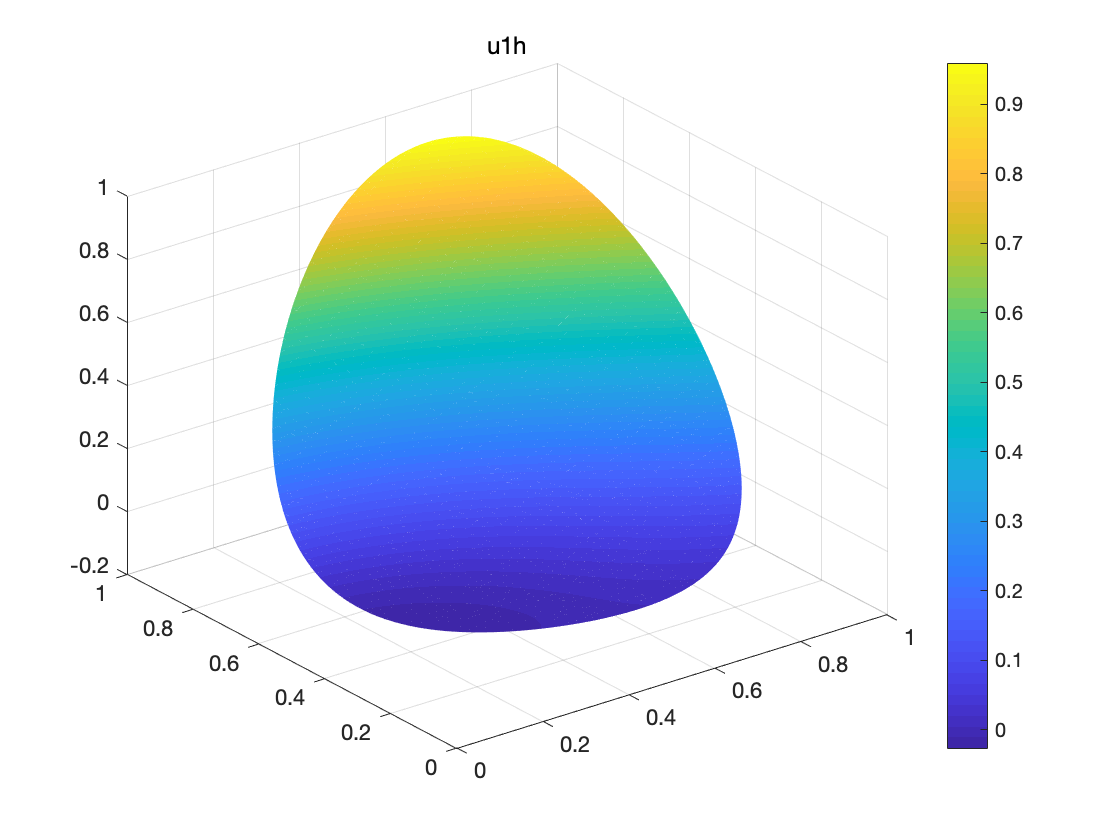} 
\end{minipage}
\begin{minipage}[t]{0.3\textwidth}
\includegraphics[height =4.5 cm,width=5  cm]{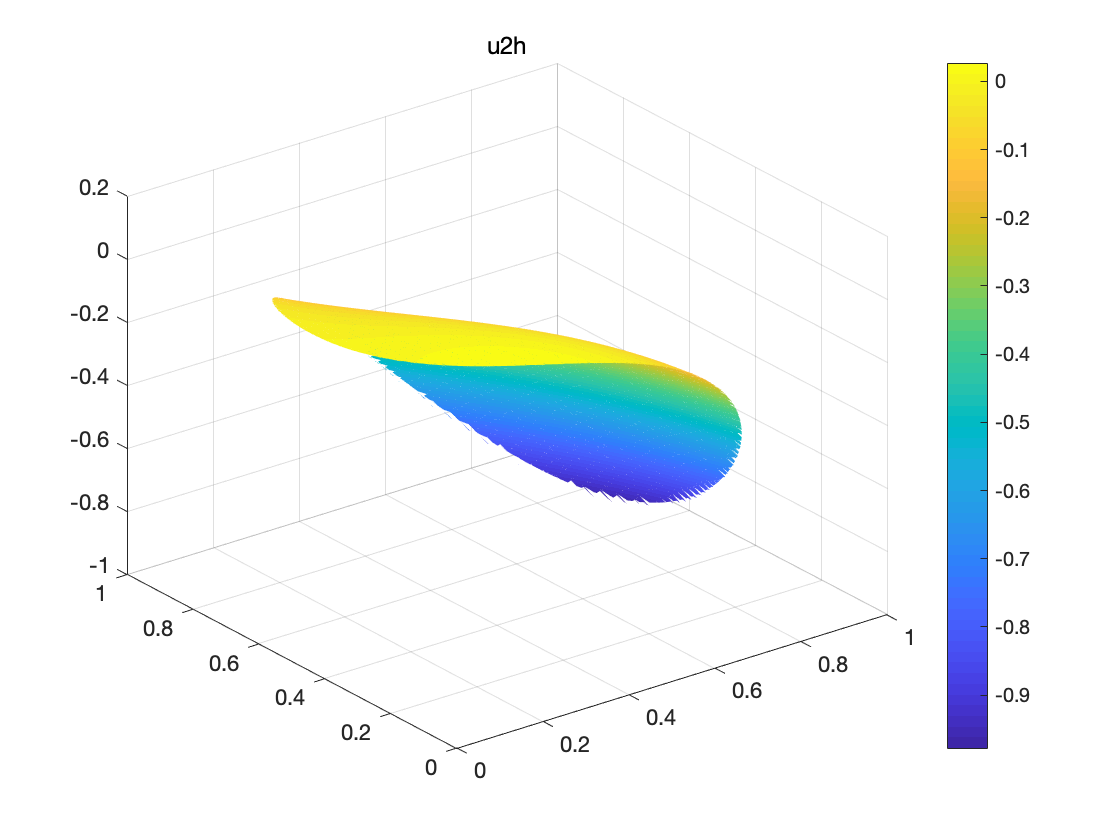}
\end{minipage}
\begin{minipage}[t]{0.3\textwidth}
\includegraphics[height = 4.5 cm,width=5  cm]{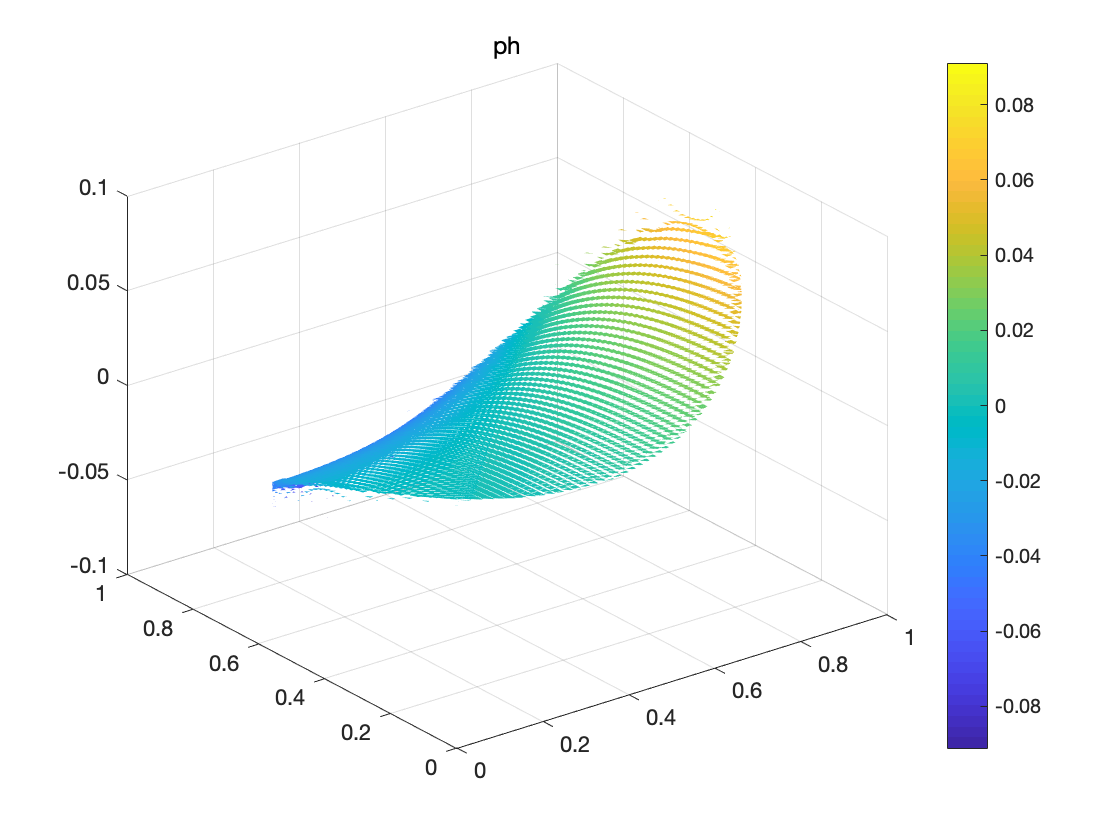}
\end{minipage}
\tiny\caption{The X-HDG solutions $u_{1h}$(left), $u_{2h}$(middle) and $p_h$(right) at $\nu = 
1, \alpha = 0$ and $128\times 128$ triangular mesh: Example \ref{circle1}. } \label{circlefigdomain}
\end{figure}

\begin{exmp}Curved domain test 2:  five-star shaped boundary \label{fivestar1}
\end{exmp}

Let $\Omega$ be a  five-star shaped domain with boundary $ \Gamma =\{(r,\theta): \rho(r,\theta) = 0,\ 0\leq \theta < 2\pi\}$ in 
\eqref{pb2}, where $\rho(r,\theta) = r-\frac{\sqrt{3}}{4}-\frac{1}{10}\sin(5\theta+\frac{\pi}{2})$, $r = \sqrt{x^2+y^2}$.
%
The exact solution of  \eqref{pb2} is given by
\begin{align*}
u_1(x,y) = x^2y, \quad 
u_2(x,y) = -xy^2, \quad
p(x,y) = \frac{1}{3}(x^3-y^3),
\end{align*}
%
and the coefficients $(\nu,\alpha)=(1,0), (1,1), (0.01,1).$  

We take $\mathbb{B} = [-1,1]^2$ in the X-HDG scheme  \eqref{xhdgscheme_1}. 
Table \ref{fivestar1_tab} shows that the boundary-unfitted X-HDG method is of optimal convergence rates for the numerical solutions.  Figure \ref{fivestarfigdomain} plots  the numerical solutions $\bm{u}_h$ and $ p_h$ at $128\times 
128$ triangular mesh.

\begin{figure}[htp]
\centering
\includegraphics[height = 4.7 cm,width=5  cm]{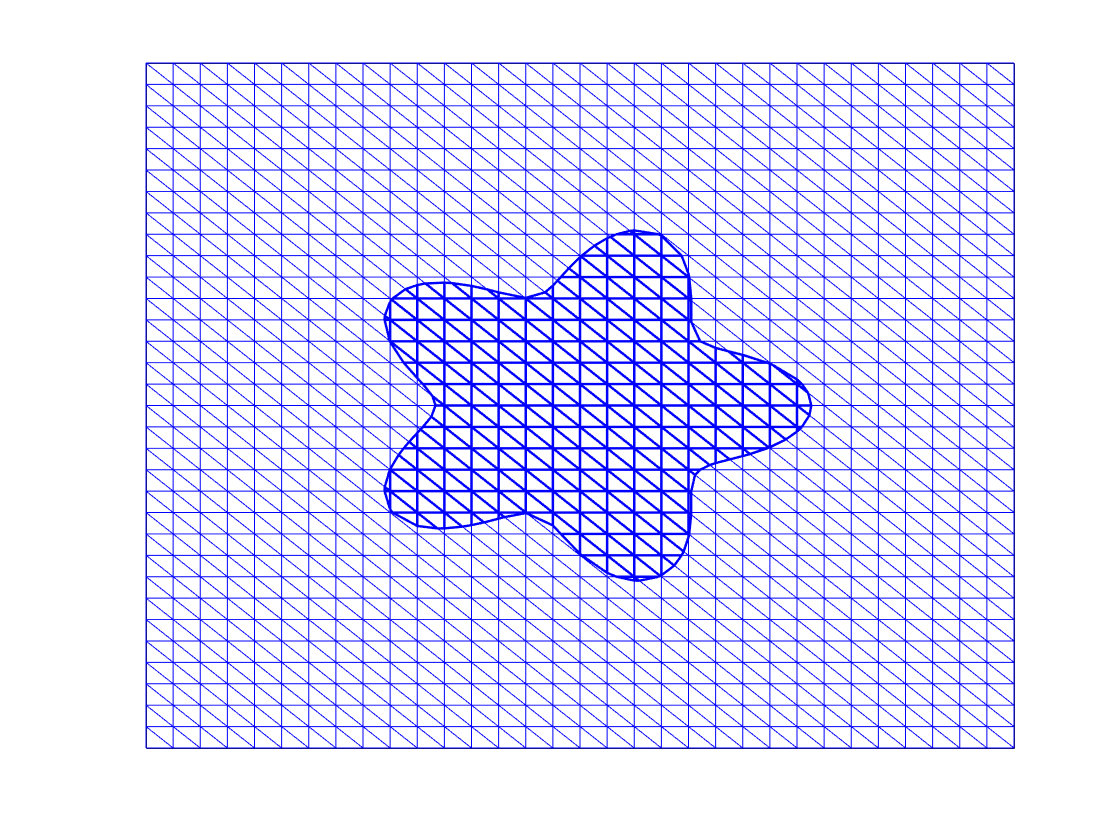} 
\caption{The  five-star shaped domain  with $32\times 32$ mesh: Example \ref{fivestar1}. 
}
\label{domain_6}
\end{figure}

\begin{table}[H]
\normalsize
\caption{History of convergence for the X-HDG scheme \eqref{xhdgscheme_1}: Example \ref{fivestar1} }\label{fivestar1_tab}
\centering
\footnotesize
{
\begin{tabular}{p{1cm}<{\centering}|p{1cm}<{\centering}|p{1.4cm}<{\centering}|p{1.45cm}<{\centering}|
p{0.45cm}<{\centering}|p{1.45cm}<{\centering}|p{0.45cm}<{\centering}|p{1.45cm}<{\centering}|p{0.45cm}<{\centering}|
p{1.45cm}<{\centering}|p{0.45cm}<{\centering}}
\hline 
\multirow{2}{*}{$\alpha $}&\multirow{2}{*}{$\nu $}& \multirow{2}{*}{mesh}& 
\multicolumn{2}{c|}{$\frac{\lVert u-u_{h}\rVert_0}{\lVert u\rVert_0}$ }&\multicolumn{2}{c|}{$\frac{\lVert L-L_h \rVert_0}{\lVert L\rVert_0}$}&\multicolumn{2}{c|}{$\frac{\lVert \nabla u-\nabla_hu_h \rVert_0}{\lVert \nabla 
u\rVert_0}$}&\multicolumn{2}{c}{$\frac{\lVert p-p_h \rVert_0}{\lVert p\rVert_0}$}\cr\cline{4-11} 
&&&error&order&error&order&error&order&error&order\cr 
\cline{1-11}
\multirow{4}{*}{0}&\multirow{4}{*}{1}
&${16\times16}$ &2.0455E-02 &-- &1.0942E-01 &-- &1.2095E-01 &-- &5.2046E-01 &-- \\
&&${32\times32}$ &5.9306E-03 &1.79 &5.5978E-02
 &0.97 &6.1824E-02 &0.97 &2.5200E-01 &1.05 \\
&&${64\times64}$ &1.5742E-03 &1.91 &2.8320E-02 &0.98 &3.1320E-02 &0.98 &1.2398E-01 
&1.02 \\
&&${128\times128}$ &3.9680E-04 &1.99 &1.4260E-02
 &0.99 &1.5792E-02 &0.99 &6.1617E-02 &1.01 \\
\hline
\multirow{4}{*}{1}&\multirow{4}{*}{1}
&${16\times16}$ &2.0401E-02 &-- &1.0941E-01 &-- &1.2090E-01 &-- &5.2097E-01 
&-- \\
&&${32\times32}$ &5.9174E-03 &1.79 &5.5976E-02
 &0.97 &6.1817E-02 &0.97 &2.5209E-01 &1.05 \\
&&${64\times64}$ &1.5714E-03 &1.91 &2.8319E-02 &0.98 &3.1319E-02 &0.98 &1.2400E-01 
&1.02 \\
&&${128\times128}$ &3.9615E-04 &1.99 &1.4259E-02
 &0.99 &1.5791E-02 &0.99 &6.1619E-02 &1.01 \\
\hline
\multirow{4}{*}{1}&\multirow{4}{*}{0.01}
&${16\times16}$ &4.3717E-01 &-- &9.8775E-01 &-- &9.8686E-01 & -- &1.9356E-01 &-- 
\\
&&${32\times32}$&1.3610E-01 &1.68 &5.8835E-01
 &0.75 &5.9952E-01 &0.72 &7.5767E-02 &1.35 \\
&&${64\times64}$ &3.6656E-02 &1.89 &3.1398E-01 &0.91 &3.2368E-01 &0.89 &3.2769E-02 
&1.21 \\
&&${128\times128}$ &9.3985E-03 &1.96 &1.6092E-01
 &0.96 &1.6672E-01 &0.96 &1.5592E-02 &1.07 \\
\hline
\end{tabular}
}
\end{table}
\begin{figure}[ht]
\centering
\begin{minipage}[t]{0.3\textwidth}
\includegraphics[height = 4.7 cm,width=5  cm]{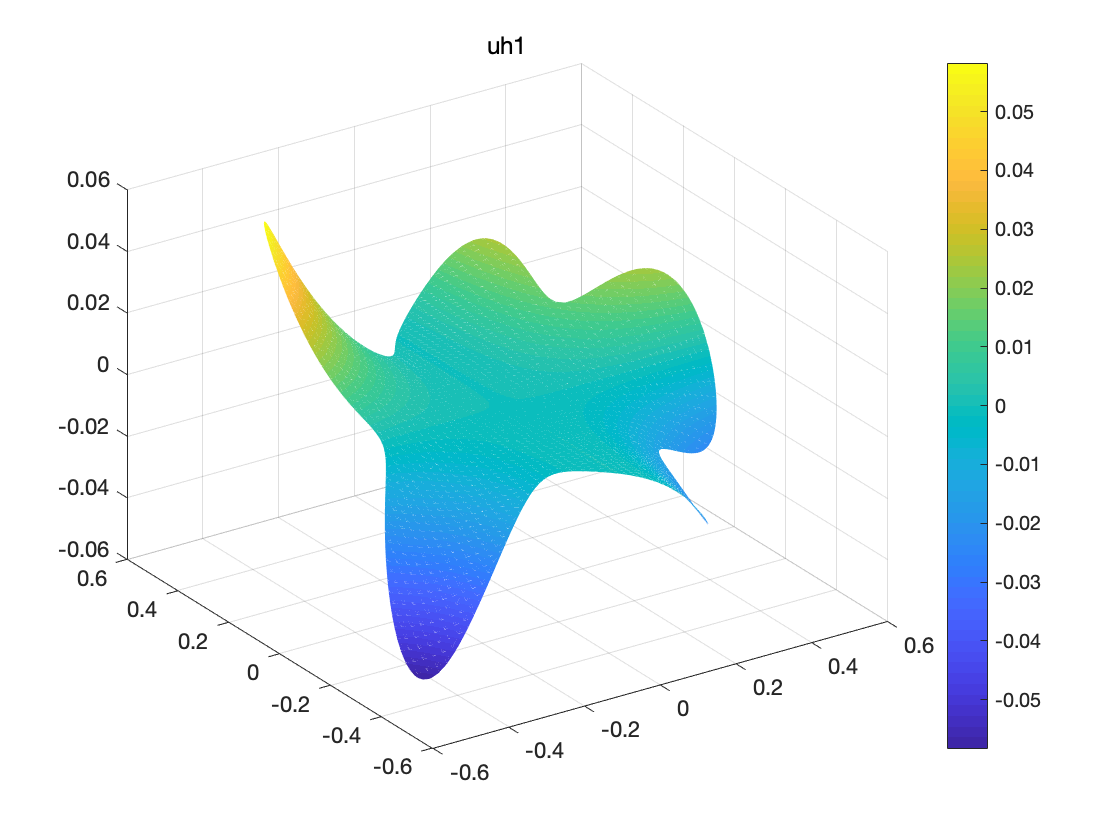} 
\end{minipage}
\begin{minipage}[t]{0.3\textwidth}
\includegraphics[height = 4.7 cm,width=5 cm]{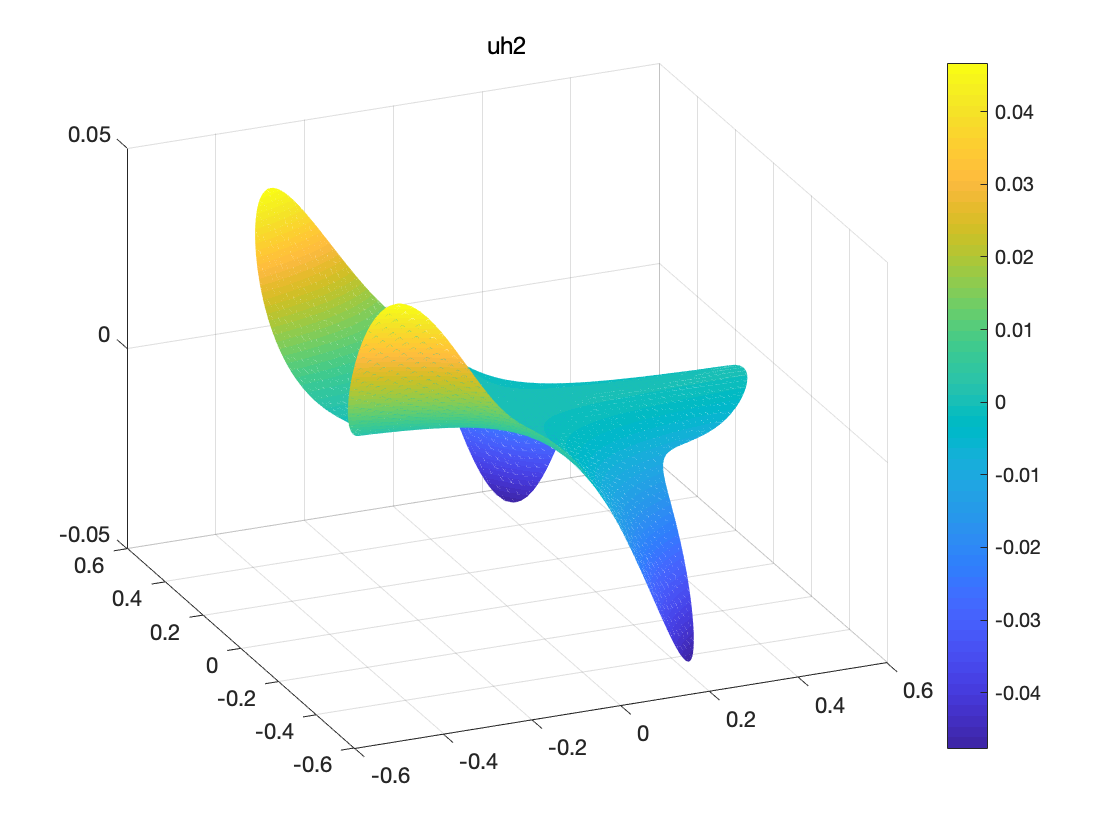}
\end{minipage}
\begin{minipage}[t]{0.3\textwidth}
\includegraphics[height = 4.7 cm,width=5  cm]{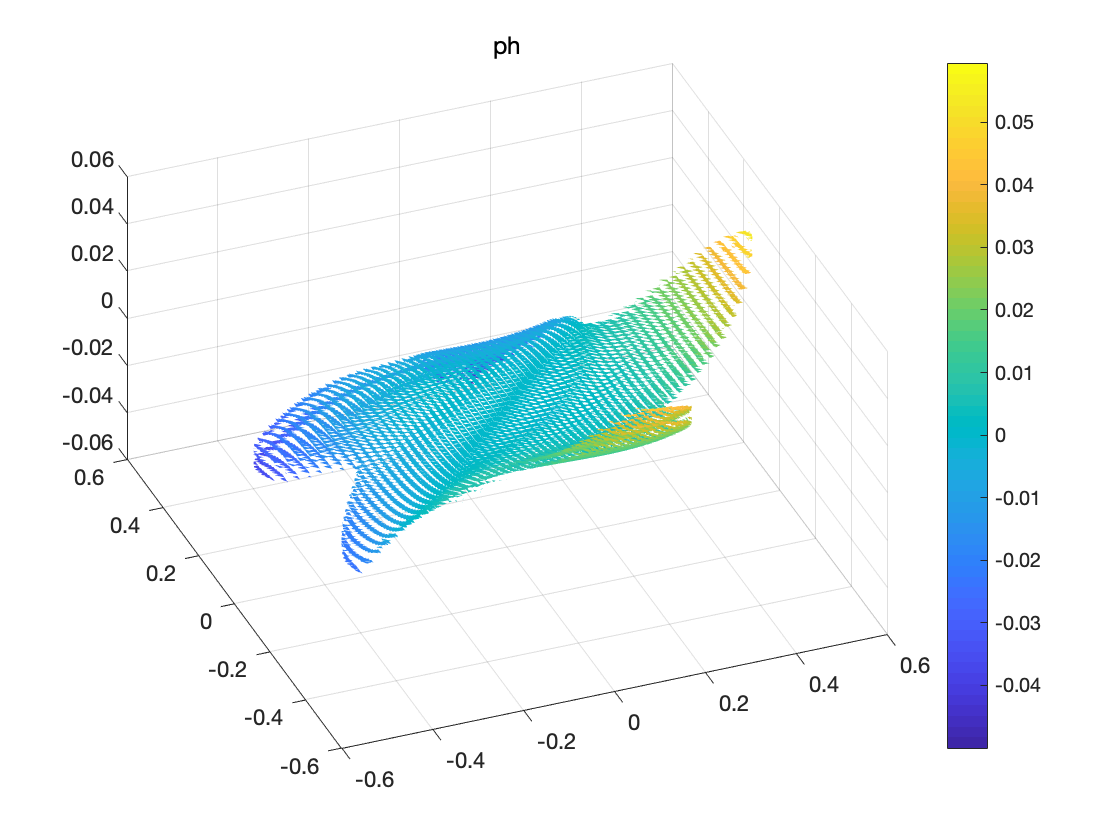}
\end{minipage}
\tiny\caption{The X-HDG solutions $u_{1h}$(left), $u_{2h}$(middle) and $p_h$(right) at $\nu 
= 1, \alpha = 0$ and $128\times 
128$ triangular mesh: Example \ref{fivestar1}.
}\label{fivestarfigdomain}
\end{figure}

	\section{Conclusions}
	 
	 For  the Darcy-Stokes-Brinkman interface problems, the proposed    low order   interface-unfitted X-HDG method is of optimal convergence and 
  applies to curved domains with boundary-unfitted meshes.   Numerical experiments have demonstrated the  performance of the method.	
	\footnotesize
	\bibliographystyle{plain}
	\bibliography{yihui_reference_papers,yihui_reference_books}
	
\end{document}